\documentclass[reqno,final]{siamltex}
\usepackage{amscd}
\usepackage{amsfonts}
\usepackage{amsmath}
\usepackage{amssymb}
\usepackage{amsxtra}
\usepackage{booktabs}
\usepackage{bbm}
\usepackage{cite}
\usepackage{dcolumn}
\usepackage{diagbox}
\usepackage{epic}
\usepackage{exscale}
\usepackage{float}
\usepackage{geometry}
\usepackage{graphicx}
\usepackage[hidelinks]{hyperref}
\usepackage{ifthen}
\usepackage{ifpdf}
\usepackage{latexsym}
\usepackage{layouts}
\usepackage{mathrsfs}
\usepackage{multicol}
\usepackage{multirow}
\usepackage{pifont}
\usepackage{psfrag}
\usepackage[shortlabels]{enumitem}
\usepackage{slashbox}
\usepackage{stmaryrd}
\usepackage{subfigure}
\usepackage{tabularx}
\usepackage{tikz}
\usepackage{txfonts}
\usepackage{verbatim}
\usepackage{wrapfig}
\usepackage{xcolor}

\providecommand{\Div}{\operatorname{div}}          

\providecommand*{\Dist}[2]{\operatorname{dist}({#1};{#2})}   
\providecommand*{\Dist}[2]{\Dist{#1}{#2}}
\providecommand{\Supp}{\operatorname{supp}}                            
\providecommand{\supp}{\Supp}

\renewcommand{\Re}{\operatorname{Re}}             
\renewcommand{\Im}{\operatorname{Im}}             

\newcommand{\Vi}{{\mathbf{i}}}

\newcommand{\Bd}{{\boldsymbol{d}}}

\newcommand{\Bn}{{\boldsymbol{n}}}

\newcommand{\Bp}{{\boldsymbol{p}}}

\newcommand{\Bx}{{\boldsymbol{x}}}
\newcommand{\By}{{\boldsymbol{y}}}
\newcommand{\Bz}{{\boldsymbol{z}}}

\newcommand{\BL}{{\boldsymbol{L}}}

\newcommand{\xibf}{\boldsymbol{\xi}}

\newcommand{\Cc}{{\cal C}}

\newcommand{\Ce}{{\cal E}}

\newcommand{\Ch}{{\cal H}}

\newcommand{\Cl}{{\cal L}}
\newcommand{\Cm}{{\cal M}}

\newcommand{\Cp}{{\cal P}}

\newcommand{\Ct}{{\cal T}}
\newcommand{\Cu}{{\cal U}}
\newcommand{\Cv}{{\cal V}}

\newcommand{\bbA}{\mathbb{A}}

\newcommand{\bbC}{\mathbb{C}}

\newcommand{\bbR}{\mathbb{R}}

\newcommand{\bbZ}{\mathbb{Z}}

\providecommand*{\wt}[1]{\widetilde{#1}}
\providecommand*{\wh}[1]{\widehat{#1}}

\newcommand*{\N}[1]{\left\|{#1}\right\|}     
\newcommand*{\sTN}[1]{|\hspace{-0.1mm}\|{#1}\|\hspace{-0.1mm}|}     
\newcommand*{\SN}[1]{\left|{#1}\right|}      

\newcommand*{\TNHone}[2][\defaultdomain]{{\sTN{#2}}_{\Hone[{#1}]}}

\newcommand*{\Lp}[2][\defaultdomain]{L^{#2}({#1})}

\newcommand*{\NLp}[3][\defaultdomain]{\N{#2}_{\Lp[#1]{#3}}}

\newcommand*{\Ltwo}[1][\defaultdomain]{\Lp[#1]{2}}

\newcommand*{\NLtwo}[2][\defaultdomain]{\NLp[#1]{#2}{2}}

\newcommand*{\Hm}[2][\defaultdomain]{H^{#2}({#1})}

\newcommand*{\bHm}[3][\defaultdomain]{H_{#3}^{#2}({#1})}

\newcommand*{\Hone}[1][\defaultdomain]{\Hm[#1]{1}}

\newcommand*{\zbHone}[1][\defaultdomain]{\bHm[#1]{1}{0}}

\newcommand*{\NHone}[2][\defaultdomain]{{\N{#2}}_{\Hone[{#1}]}}

\newcommand*{\SNHone}[2][\defaultdomain]{{\SN{#2}}_{\Hone[{#1}]}}

\newcommand*{\Hdiv}[1][\defaultdomain]{\boldsymbol{H}(\Div,{#1})}

\newcommand*{\jump}[2][]{\left \llbracket{#2}\right\rrbracket_{#1}}

\newcommand{\D}{\mathrm{d}}

\newcommand{\ol}{\overline}

\newcommand{\be}{\begin{eqnarray}}
\newcommand{\ee}{\end{eqnarray}}

\newcommand{\ben}{\begin{eqnarray*}}
\newcommand{\een}{\end{eqnarray*}}

\def\address#1{\expandafter\def\expandafter\@aabuffer\expandafter
    {\@aabuffer{\affiliationfont{#1}}\relax\par
    \vspace*{13pt}}}

\def\X0{V(\Cm,\boldsymbol{0})}

\title{Wavenumber-explicit stability and preasymptotic error analysis of UPML finite element method for obstacle scattering problems
\thanks{This work is supported in part by the Strategic Priority Research Program of the Chinese Academy of Sciences (XDB0640000, XDB0640300) and the National Key R \& D Program of China (2025YFA1016600, 2025YFA1016602).}}

\author{
   Yuhao Wang \thanks{School of Mathematical Science, University of Chinese Academy of Sciences;
    Academy of Mathematics and Systems Science, Chinese Academy of Sciences,
    Beijing, 100190, China.
    (wangyuhao@amss.ac.cn)
  }
  \and Weiying Zheng \thanks{State Key Laboratory of Mathematical Sciences, Institute of Computational Mathematics and Scientific/Engineering Computing,
    Academy of Mathematics and Systems Science,
    Chinese Academy of Sciences;
    School of Mathematical Science,
    University of Chinese Academy of Sciences,
    Beijing, 100190, China.
    (zwy@lsec.cc.ac.cn)
  }
}

\begin{document}
\maketitle

\begin{abstract}
This paper develops a wavenumber-explicit stability and preasymptotic error analysis for finite element approximation of the two-dimensional Helmholtz scattering problem with a uniaxial perfectly matched layer (UPML) truncation. The analysis is based on  direct estimates of the stretched Green kernel associated with the Cartesian complex coordinate transformation. We establish explicit stability for the truncated UPML problem. In particular, we prove that the inf-sup constant of the truncated UPML formulation is $\mu_L = O(k^{-1})$ in a natural $k$-weighted $H^1$ norm. As a consequence, we also obtain an exponential decay estimate for the PML truncation error. Based on this stability estimate, we formulate a linear continuous interior penalty finite element method (CIP-FEM) on the truncated UPML domain. A key ingredient of the analysis is to establish a piecewise $H^{1+s}$-regularity result (for any $0<s<1$), which accounts for coefficient jumps across PML interfaces and singularities induced by Cartesian corner geometries. This fractional regularity is sufficient to derive wavenumber-explicit preasymptotic error estimates. Numerical experiments confirm the theoretical predictions and illustrate the effectiveness of the UPML CIP-FEM in the high-frequency regime.
\end{abstract}

\begin{keywords}
Helmholtz equation; uniaxial PML, wavenumber-explicit stability, preasymptotic error estimate, finite element method.
\end{keywords}

\begin{AMS}
65N12, 65N15, 65N30, 78A40
\end{AMS}

\section{Introduction}

The Helmholtz equation plays a fundamental role in time-harmonic wave propagation problems arising in acoustics, electromagnetics, and related applications. In the high-frequency regime, numerical approximation of the Helmholtz equation becomes challenging due to the strong indefiniteness of the operator and the presence of pollution effects in standard finite element discretizations.

In this paper, we study the acoustic scattering problem by a bounded and starlike obstacle $D\subset \bbR^2$ which has a $C^2$-smooth boundary $\Gamma=\partial D$
\begin{subequations}\label{model}
\begin{align}
-\Delta{u} - k^2u =f \quad &\text{in} \;\;D^c :=\bbR^2\backslash\ol{D}, \label{model-eqn}\\
u=0 \quad &\text{on} \;\; \Gamma, \label{model-bc} \\
\sqrt{r}\big(\partial_r u-\Vi k u\big)
\to 0 \quad & \text{as}\;\; r=|\Bx|\to\infty, \label{model-rad}
\end{align}
\end{subequations}
where $f\in\Ltwo[\bbR^2]$ is a distributional source which has compact support, $k$ is the constant wavenumber, and \( \partial_r u\) represents the partial derivative in the radial direction.  Since we are interested in the high wavenumber regime, $k\ge 1$ is assumed throughout this paper. 

Wavenumber-explicit stability and error estimates play a fundamental role in the numerical analysis of high-frequency scattering problems. In the one-dimensional setting, Ihlenburg and Babu\v{s}ka~\cite{IhlenburgBabuska1995,IhlenburgBabuska1997} proved that the discrete inf-sup condition holds with a constant of order \(O(k^{-1})\) and derived corresponding pre-asymptotic error estimates for the finite element method. The extension of such analyses to higher dimensions is substantially more challenging. Most existing results concern Helmholtz problems equipped with impedance boundary conditions; see, for example, \cite{DuWu2015,demkowicz2012wavenumber,feng2009discontinuous,feng2011hp,MelenkSauter2011,melenk2013general,shen2007analysis,Wu2014,ZhuWu2013}. In contrast, analogous results for exterior scattering problems remain relatively limited, primarily because establishing wavenumber-explicit stability estimates with the optimal inf-sup scaling \(O(k^{-1})\) is considerably more difficult. Chandler-Wilde and Monk~\cite{ChandlerWildeMonk2008} investigated the exterior scattering problem for a bounded sound-soft obstacle. By introducing a Dirichlet-to-Neumann (DtN) boundary condition, they reformulated the problem on a truncated computational domain and showed that, when the obstacle is starlike, the inf-sup constant is of order \(O(k^{-1})\).

The perfectly matched layer (PML) method was first introduced by B\'erenger~\cite{Berenger1994} and was subsequently reformulated through complex coordinate stretching~\cite{ChewWeedon1994}. It provides a flexible framework for replacing the radiation condition at infinity with an absorbing layer posed on a bounded computational domain. For scattering problems involving anisotropic or geometrically complicated obstacles, the uniaxial, or Cartesian, PML is particularly attractive, since it permits the artificial truncation boundary to be placed close to the obstacle and facilitates a convenient partition of the computational domain~\cite{BramblePasciak2013,ChenWu2008,ChenZheng2010}.

Wavenumber-explicit analyses of PML methods were only developed much later. Li and Wu~\cite{LiWu2019} established the inf-sup stability of the radial PML method for media with piecewise constant coefficients and showed that the corresponding inf-sup constant is of order \(O(k^{-1})\). They also derived pre-asymptotic error estimates for both the standard finite element method and the continuous interior-penalty finite element method (CIP-FEM). Subsequently, Chaumont-Frelet, Gallistl, Nicaise, and Tomezyk~\cite{ChaumontFrelet2022} investigated the radial PML method for two-dimensional Helmholtz problems and proved an \(O(k^{-1})\) inf-sup estimate for a class of monotonically increasing medium coefficients.

In contrast, the wavenumber-explicit stability theory for uniaxial PML (UPML) methods remains far less developed and has so far been concerned mainly with source-scattering problems with $D=\emptyset$. In 2013, Chen and Xiang obtained an inf-sup stability estimate with a constant of order \(O(k^{-3/2})\) \cite{ChenXiangSTDDM_PartI}. Their analysis  was primarily motivated by the source transfer domain decomposition method. In 2024, Galkowski, Gong, Graham, Lafontaine, and Spence posted a preprint on arXiv, which remains unpublished, investigating overlapping Schwarz methods for the Helmholtz equation \cite{gal2024}. Their work provides a wavenumber-explicit analysis based on the assumption that the UPML is $C^\infty$-smooth. Consequently, the $O(k^{-1})$ inf-sup stability has not yet been established for the commonly used piecewise polynomial UPMLs in source-scattering problems, let alone for those in obstacle-scattering problems.

Another important issue is the pre‑asymptotic error analysis of finite element methods for the UPML‑truncated problem. This analysis hinges critically on regularity estimates for the PML problem. For a circular PML with a piecewise linear coordinate transformation, both the PML interface and the truncation boundary are smooth, and the solution is piecewise $H^2$-regular \cite{LiWu2019}. In the case of piecewise linear UPML, however, the coefficients of the PML equation are discontinuous across Cartesian interfaces, so the solution no longer possesses piecewise \(H^2\)-regularity. To overcome this difficulty, we prove that the PML solution is piecewise \(H^{1+s}\)-regular for any \(0<s<1\). Building upon this regularity result, we then derive pre‑asymptotic error estimates for the finite element approximation.

The present paper has two main objectives. The first is to establish a wavenumber-explicit stability estimate for the UPML formulation of~\eqref{model}, and the second is to derive pre-asymptotic error estimates for its finite element discretization. To this end, we develop a new technique for analyzing the associated Green's kernel and derive wavenumber-explicit estimates for both the source- and obstacle-scattering problems. We prove that the truncated UPML formulation satisfies the inf-sup condition with a constant of the optimal order \(O(k^{-1})\). We also establish exponential convergence of the UPML solution to the exact solution as the layer parameters increase. Building on these stability and convergence results, we discretize the truncated problem by the CIP-FEM and obtain pre-asymptotic error estimates for the numerical solution. Finally, numerical experiments are presented to validate the theoretical findings.

The remainder of the paper is organized as follows.
In section~\ref{sec:upml-prelim}, we introduce the UPML by using a complex coordinate transformation, and propose the UPML problem on the truncated domain.
In section~\ref{sec:green}, we propose a dyadic estimate for the transformed Green's kernel.
In section~\ref{sec:source}, we prove the inf-sup condition for the truncated source problem with an inf-sup constant being $O(k^{-1})$.  In section~\ref{sec:obstacle}, we establish the inf-sup stability of the UPML method for obstacle-scattering problems. In section~\ref{sec:CIP-fem}, we derive  the pre-asymptotic error estimate for the CIP-FEM approximation of the truncated UPML problem. Section~\ref{sec:numerics} reports
numerical results.

\section{The uniaxial PML method}\label{sec:upml-prelim}

The purpose of this section is to introduce the UPML method for truncating the exterior scattering problem \eqref{model}. The construction follows the standard Cartesian complex-stretching framework, as used for example in \cite[Section~2.1]{ChenXiangSTDDM_PartI}.

\subsection{Notations}

For a bounded Lipschitz domain \(\Omega\subset \bbR^2\), we write
\[
(u,v)_{\Omega}:=\int_{\Omega} u v, \quad
\NLtwo[{\Omega}]{v} := (v,\bar{v})_{\Omega}^{1/2}
\]
for the space $\Ltwo[{\Omega}]$ of complex-valued functions, and equip \(H^1({\Omega})\) with the \(k\)-weighted norm
\[
\sTN{v}_{H^1({\Omega})}:= \big(
\|\nabla v\|_{L^2({\Omega})}^2
+k^2\|v\|_{L^2({\Omega})}^2\big)^{1/2}.
\]
The dual space of a Banach space $V$ is denoted by $V'$. It is equipped with the norm
\[
\N{\ell}_{V'} := \sup_{0\ne v\in V}\frac{|\langle \ell,v\rangle|}{\N{v}_V}
\quad \forall\, \ell\in V'.
\]
The subspace of $\Hone[{\Omega}]$ with homogeneous boundary conditions is denoted by $\zbHone[{\Omega}]$, and its dual space is denoted by \(H^{-1}({\Omega})\).
Unless stated otherwise, all \(H^1\)- and \(H^{-1}\)-norms below are understood in this weighted sense. Whenever a dual space of the form \((H^1({\Omega}))'\) appears later, it is always the dual of \(H^1({\Omega})\) endowed with the same \(k\)-weighted norm.
In particular, we define
\[
\sTN{\ell}_{{\Hone}'} := \sup_{0\ne v\in\Hone}\frac{|\langle \ell,v\rangle|}{\TNHone{v}}
\quad \forall\, \ell\in \Hone'.
\]
Moreover, all generic constants denoted by \(C\) are independent of \(k\), the mesh size \(h\), and the source term $f$.

It is known that $G(\Bx,\By)=\frac{\Vi}{4}H_0^{(1)}(k|\Bx-\By|)$ is the fundamental solution of the free-space Helmholtz equation, where $H^{(1)}_0$ denotes the zero-order Hankel function of the first kind. Then the solution of problem \eqref{model} admits the integral representation
\begin{align} \label{u-integ}
u(\Bx)=\int_{D^c}G(\Bx,\By)f(\By){\D}\By
+\int_\Gamma \frac{\partial u}{\partial\Bn}(\By)\, G(\Bx,\By) \D s(\By).
\end{align}

We choose $l_1,l_2>0$ such that $D$ and $\supp(f)$ are contained in the rectangle
\begin{align}\label{eq:Bl}
B_l :=  \big\{\Bx=(x_1,x_2)^T\in\bbR^2:\ |x_1|<l_1,\ |x_2|<l_2\big\}.
\end{align}
Write $\Gamma_l :=\partial B_l$ and $\Omega_l:=B_l\cap D^c$. By \cite{ChandlerWildeMonk2008}, problem ~\eqref{model} has a unique solution. From Corollary~3.1 and Theorem~3.2 of \cite{Li-Zheng-Zhu-2020}, there exists a generic constant $C$ independent of $k$ such that
\begin{align} \label{eq:stab-u}
\sTN{u}_{\Hone[\Omega_l]} + k^{-1}\SN{u}_{H^2(\Omega_l)}
\le C\NLtwo[\Omega_l]{f}.
\end{align}

\subsection{Uniaxial complex stretching}

To introduce the UPML, we fix $d_1,d_2>0$ and set
\begin{align}\label{eq:BL-pml}
L_1:=l_1+d_1, \quad L_2:=l_2+d_2, \quad
B_L:=(-L_1,L_1)\times(-L_2,L_2),\quad
\Gamma_L:=\partial B_L.
\end{align}
The domain $B_L\setminus \overline{B_l}$ is regarded as the wave-absorbing layer (Fig.~\ref{fig:upml-geometry}). For convenience, we shall use the notations
\[
B_l^c:=\bbR^2\backslash\ol B_l,\quad
B_L^c:=\bbR^2\backslash\ol{B}_L,\quad
\Omega_L = D^c\cap B_L.
\]

\begin{figure}[htp!]
\centering
\includegraphics[width=0.4\textwidth]{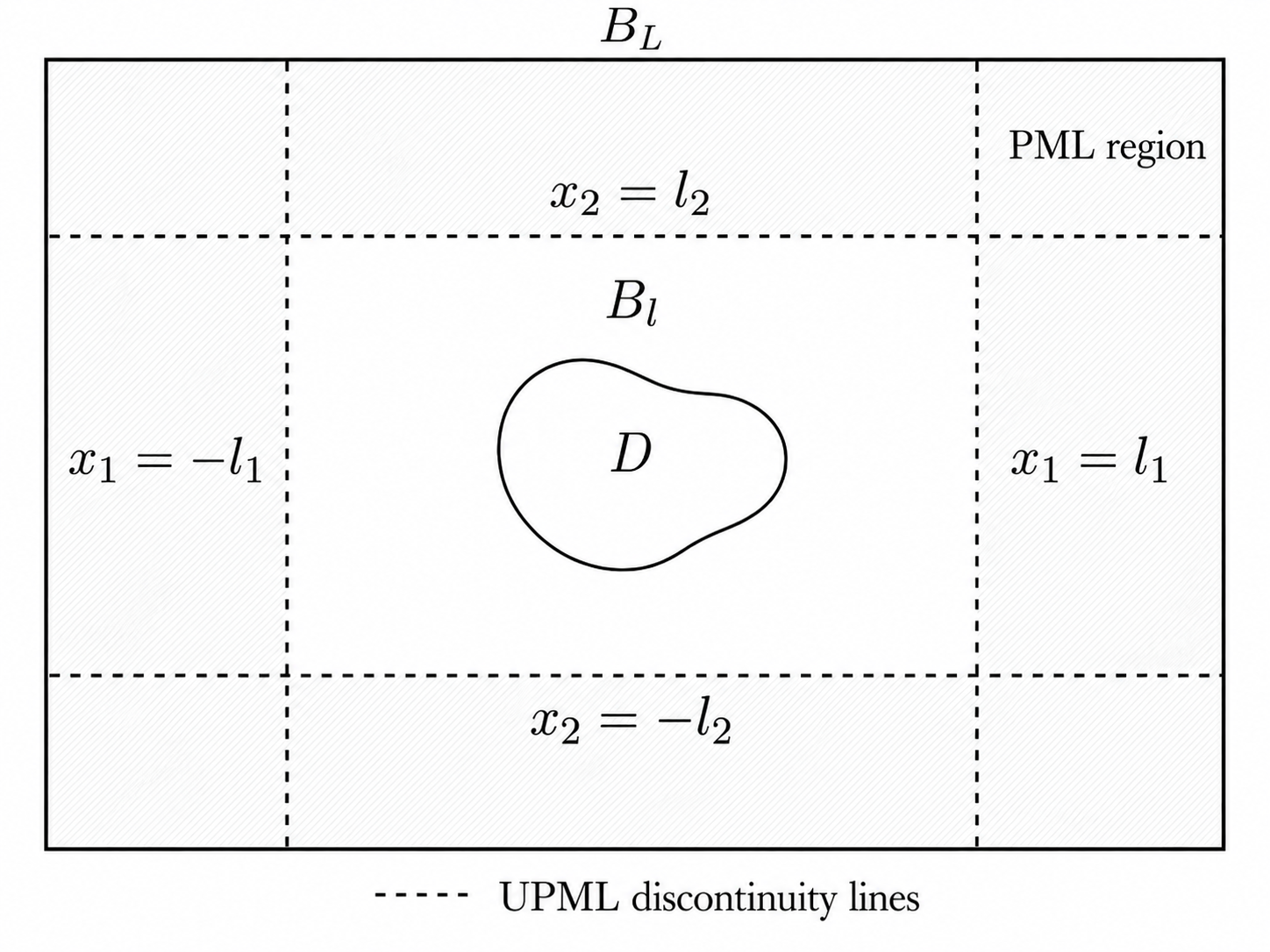}
\caption{Geometric illustration of the UPML and truncated domain.}
\label{fig:upml-geometry}
\end{figure}

The UPML is defined by performing complex stretching to  each coordinate variable independently, namely, $\tilde{\Bx}=(\tilde x_1,\tilde x_2)$, where
\begin{equation}\label{eq:stretch}
	\tilde x_j(x_j):=\int_0^{x_j}\alpha_j(t)\,{\D} t
	=
	x_j+\Vi\int_0^{x_j}\sigma_j(t)\,{\D} t,
	\qquad j=1,2.
\end{equation}
Here $\alpha_j:=1+\Vi\sigma_j$, $j=1,2$, stand for the medium property functions and
\begin{equation}\label{eq:sigma-assump}
\sigma_j(t)=0 \;\; \hbox{if}\;\; |t|\le l_j,\quad
\sigma_j(t) =\sigma_0 \;\; \hbox{if}\;\; |t|>l_j,
	\quad j=1,2,
\end{equation}
for a fixed constant $\sigma_0>0$.
The profile \eqref{eq:sigma-assump} is piecewise constant. We remark that the results can be extended to smoothing profiles where $\sigma_1$ and $\sigma_2$ increase from $0$ to $\sigma_0$ within a narrow neighborhood of $\partial B_l$.
For $\Bx,\By\in\bbR^2$, the complex distance between $\tilde{\Bx}$ and $\tilde{\By}$ is defined by
\begin{equation}\label{eq:rho}
\rho(\tilde{\Bx},\tilde{\By}):=
\big[(\tilde x_1-\tilde y_1)^2+(\tilde x_2-\tilde y_2)^2\big]^{1/2},
\end{equation}
where the square-root branch is chosen so that $\Re(z^{1/2})>0$ for $z\in\bbC\setminus(-\infty,0]$.
An elementary square-root inequality shows that (cf. \cite[Lemma~2.4]{ChenXiangSTDDM_PartI})
\begin{equation}\label{eq:Imrho-lower}
\Im\rho(\tilde{\Bx},\tilde{\By})
\ge \sum_{j=1}^2\frac{x_j-y_j}{|\Bx-\By|}
\int_{y_j}^{x_j}\sigma_j(t)\,{\D} t.
\end{equation}
The analytical continuation of the Green's function to complex arguments is defined by
\begin{equation}\label{eq:tG}
G(\tilde{\Bx},\tilde{\By}) :=\frac{\Vi}{4}H_0^{(1)}\!\bigl(k\,\rho(\tilde{\Bx},\tilde{\By})\bigr),
		\quad \Bx\neq \By.
\end{equation}

\subsection{Exponential decay of $G(\tilde\Bx,\tilde\By)$}

Next we study the asymptotic behavior of $G(\tilde\Bx,\tilde\By)$ as $|\Bx-\By|\to \infty$.
First we prove some preliminary estimates of the complex distance function.

\begin{lemma}\label{lem:rho-cmpr}
For $\Bx,\By\in\mathbb R^2$, there holds
$(1+\sigma_0)^{-1}|\Bx-\By|
\le|\rho(\tilde{\Bx},\tilde{\By})| \le	(1+\sigma_0)|\Bx-\By|$.
\end{lemma}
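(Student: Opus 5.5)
Write $\tilde x_j-\tilde y_j = (x_j-y_j) + \Vi s_j$, where $s_j := \int_{y_j}^{x_j}\sigma_j(t)\,\D t$. Since $0\le\sigma_j\le\sigma_0$, the quantity $s_j$ is real, has the same sign as $x_j-y_j$ (or vanishes), and satisfies $|s_j|\le\sigma_0|x_j-y_j|$. It is therefore convenient to write $s_j=\theta_j(x_j-y_j)$ with $\theta_j\in[0,\sigma_0]$ (take $\theta_j=0$ if $x_j=y_j$). Then $\tilde x_j-\tilde y_j=(1+\Vi\theta_j)(x_j-y_j)$. Set $a_j:=x_j-y_j$, so that $|\Bx-\By|^2=a_1^2+a_2^2$.

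For the modulus we use $|\rho|^2=|\rho^2|$. Expanding,
\[
\rho^2=\sum_{j=1}^2(1+\Vi\theta_j)^2a_j^2
=\sum_{j}(1-\theta_j^2)a_j^2+2\Vi\sum_j\theta_ja_j^2 .
\]
The upper bound follows from the triangle inequality: $|\rho^2|\le\sum_j|1+\Vi\theta_j|^2a_j^2=\sum_j(1+\theta_j^2)a_j^2\le(1+\sigma_0^2)|\Bx-\By|^2\le(1+\sigma_0)^2|\Bx-\By|^2$.

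The lower bound is the only real step. The plan is to bound $|\rho^2|$ below by a combination of its real and imaginary parts. Since $|z|\ge \cos\phi\,\Re z+\sin\phi\,\Im z$ for any $\phi$, and in particular $|z|\ge (\Re z+\lambda\Im z)/\sqrt{1+\lambda^2}$ for $\lambda\ge0$, we get
\[
|\rho^2|\ \ge\ \frac{\sum_j(1-\theta_j^2+2\lambda\theta_j)a_j^2}{\sqrt{1+\lambda^2}} .
\]
Choosing $\lambda=\sigma_0$ gives, for $\theta\in[0,\sigma_0]$, $1-\theta^2+2\sigma_0\theta=1+\theta(2\sigma_0-\theta)\ge1$. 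Hence $|\rho|^2\ge|\Bx-\By|^2/\sqrt{1+\sigma_0^2}$, i.e.
\[
|\rho|\ge(1+\sigma_0^2)^{-1/4}|\Bx-\By|\ge(1+\sigma_0)^{-1}|\Bx-\By| .
\]

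The main point to check is the choice of the projection direction $\lambda$, which must make the coefficient positive uniformly in $\theta_j\in[0,\sigma_0]$. The choice $\lambda=\sigma_0$ works directly. An alternative is to observe that each term $(1+\Vi\theta_j)^2a_j^2$ lies in the sector of arguments $[0,2\arctan\sigma_0]$, which has opening angle less than $\pi$; this gives the same bound. No branch issue arises, because only $|\rho|=|\rho^2|^{1/2}$ is used.
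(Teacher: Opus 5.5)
Your proof is correct. It shares the paper's setup (your $\theta_j$ is the paper's $m_j=\Im\beta_j$) and its triangle-inequality upper bound. The two arguments differ only in the lower bound.

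The paper computes the modulus exactly, $|t_1\beta_1^2+t_2\beta_2^2|^2=(1-q)^2+4m^2$ with $q=\sum_j t_jm_j^2$ and $m=\sum_j t_jm_j$. It then uses $q\le\sigma_0 m$ and minimizes the quadratic $(1-\sigma_0 s)^2+4s^2$, splitting into the cases $\sigma_0m\le1$ and $\sigma_0m\ge1$.

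You instead use a single linear functional: projection onto the bisector $e^{\Vi\arctan\sigma_0}$ of the sector $\{0\le\arg z\le2\arctan\sigma_0\}$, which contains every summand $(1+\Vi\theta_j)^2a_j^2$. Positivity of $1+\theta_j(2\sigma_0-\theta_j)$ then gives the bound term by term, with no case analysis. This also makes the mechanism transparent: all summands lie in a fixed sector of opening less than $\pi$. The argument carries over verbatim to any dimension and any profile with $0\le\sigma_j\le\sigma_0$.

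What you give up is a little sharpness. You obtain $|\rho|^2\ge(1+\sigma_0^2)^{-1/2}|\Bx-\By|^2$. The paper's minimization gives $|\rho|^2\ge2(\sigma_0^2+4)^{-1/2}|\Bx-\By|^2$ before it is weakened to the stated constant. That bound is optimal: it is attained for $\theta_1=0$, $\theta_2=\sigma_0$, $t_2=(\sigma_0^2+4)^{-1}$. Both bounds are much stronger than $(1+\sigma_0)^{-1}$, so the difference does not matter for the lemma.

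A small side benefit of your route is that $\Re\rho^2+\sigma_0\Im\rho^2\ge|\Bx-\By|^2>0$ shows directly that $\rho^2\notin(-\infty,0]$ for $\Bx\ne\By$. Hence the principal square-root branch used to define $\rho$ is well defined, a point the paper leaves implicit.
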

\begin{proof}
The conclusion is trivial for $\Bx=\By$. Now we assume $\Bx\neq \By$. It is clear that
\begin{align*}
\tilde x_j(x_j)-\tilde y_j(y_j)
=\int_{y_j}^{x_j}[1+\Vi\,\sigma_j(t)]\D t
=\beta_j(x_j-y_j),\quad
\beta_j:= 1+
\frac{\Vi}{x_j-y_j}\int_{y_j}^{x_j}\sigma_j(t)\D t.
\end{align*}
Since $\rho(\tilde{\Bx},\tilde{\By})^2 =
(x_1-y_1)^2\beta_1^2+(x_2-y_2)^2\beta_2^2$, we have
\begin{align*}
|\rho(\tilde{\Bx},\tilde{\By})|^2 =
|\Bx-\By|^2\bigl|t_1\beta_1^2+t_2\beta_2^2\bigr|,\quad
t_j:=\frac{|x_j-y_j|^2}{|\Bx-\By|^2},\quad j=1,2.
\end{align*}
Since $\bigl|t_1\beta_1^2+t_2\beta_2^2\bigr|\le t_1|\beta_1|^2+t_2|\beta_2|^2\le (t_1+t_2)(1+\sigma_0^2)
=1+\sigma_0^2$, this yields the upper bound.

For the lower bound, we write $m_j=\Im\beta_j$ and find
\begin{equation*}
\bigl|t_1\beta_1^2+t_2\beta_2^2\bigr|^2= (1-q)^2+4m^2, \quad
q:=t_1m_1^2+t_2m_2^2, \quad
m:=t_1m_1+t_2m_2.
\end{equation*}
Since $q\le \sigma_0 m$, we have $	\bigl|t_1\beta_1^2+t_2\beta_2^2\bigr|^2
\ge	(1-\sigma_0 m)^2+4m^2$.
	
For $\sigma_0 m\le 1$, the quadratic function
$h(s):=(1-\sigma_0 s)^2+4s^2$ has a minimum on $(0,+\infty)$
\[
h(s_*) =\frac{4}{\sigma_0^2+4}, \quad
s_* :=\frac{\sigma_0}{\sigma_0^2+4}.
\]
This yields $\bigl|t_1\beta_1^2+t_2\beta_2^2\bigr|^2
\ge 4/(\sigma_0^2+4)$. Otherwise, for $\sigma_0 m\ge 1$, we have
\[
\bigl|t_1\beta_1^2+t_2\beta_2^2\bigr|^2
\ge 4m^2 \ge 4/(\sigma_0^2+4)
\ge (1+\sigma_0)^{-2}.
\]
The proof is finished.
\end{proof}

\begin{lemma}\label{lem:Imrho-large-sep}
Suppose $|\Bx-\By|> R$ with $R:=4\sqrt{2} \max\{l_1,l_2\}$. Then
\[
\Im\rho(\tilde{\Bx},\tilde{\By}) \ge \frac14 \sigma_0\,|\Bx-\By|.
\]
\end{lemma}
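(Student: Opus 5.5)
The plan is to start from the lower bound \eqref{eq:Imrho-lower}, show that each summand on its right-hand side is nonnegative, and then bound from below the single summand belonging to the coordinate direction in which $\Bx$ and $\By$ are farthest apart.

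\emph{Nonnegativity.} Since $\sigma_j\ge 0$, the integral $\int_{y_j}^{x_j}\sigma_j(t)\,\D t$ has the same sign as $x_j-y_j$. Hence
\[
(x_j-y_j)\int_{y_j}^{x_j}\sigma_j(t)\,\D t=|x_j-y_j|\int_{I_j}\sigma_j(t)\,\D t\ge 0,
\]
where $I_j$ is the segment between $y_j$ and $x_j$. We may therefore discard one of the two terms in \eqref{eq:Imrho-lower}.

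\emph{Lower bound for one integral.} By \eqref{eq:sigma-assump}, $\sigma_j=\sigma_0$ on $I_j$ except on $I_j\cap[-l_j,l_j]$, which has length at most $2l_j$. Therefore
\[
\int_{I_j}\sigma_j\,\D t\ \ge\ \sigma_0\bigl(|x_j-y_j|-2l_j\bigr).
\]

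\emph{Choice of direction.} Since $|x_1-y_1|^2+|x_2-y_2|^2=|\Bx-\By|^2$, we can pick $j$ with $|x_j-y_j|\ge |\Bx-\By|/\sqrt2$. The hypothesis $|\Bx-\By|>R=4\sqrt2\max\{l_1,l_2\}$ then gives $|x_j-y_j|>4l_j$, so $2l_j<\tfrac12|x_j-y_j|$ and
\[
\int_{I_j}\sigma_j\,\D t\ \ge\ \tfrac12\sigma_0|x_j-y_j|.
\]

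\emph{Conclusion.} Combining with \eqref{eq:Imrho-lower} and dropping the other nonnegative term,
\[
\Im\rho(\tilde\Bx,\tilde\By)\ \ge\ \frac{|x_j-y_j|}{|\Bx-\By|}\cdot\frac{\sigma_0|x_j-y_j|}{2}\ \ge\ \frac{\sigma_0}{2}\cdot\frac{|\Bx-\By|^2/2}{|\Bx-\By|}=\frac14\sigma_0|\Bx-\By|.
\]

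No step is genuinely hard. The one point needing care is to use the bound $\sigma_0(|x_j-y_j|-2l_j)$ only for the chosen $j$. For the other direction that expression can be negative, so there we must rely only on the plain nonnegativity of the summand.
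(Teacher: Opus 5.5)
Your proof is correct and follows essentially the same route as the paper. Like the paper, you choose the direction $j$ of largest coordinate separation, bound $(x_j-y_j)\int_{y_j}^{x_j}\sigma_j(t)\,\D t$ below by $\sigma_0|x_j-y_j|(|x_j-y_j|-2l_j)\ge\frac12\sigma_0|x_j-y_j|^2\ge\frac14\sigma_0|\Bx-\By|^2$, and insert this into \eqref{eq:Imrho-lower}. Your explicit remark that the other summand is nonnegative, because $\sigma_j\ge 0$, supplies a step the paper uses without stating it.
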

\begin{proof}
Let $j\in\{1,2\}$ satisfy $|x_j-y_j|=\max\{|x_1-y_1|,|x_2-y_2|\}$.
Then
\[
|x_j-y_j|^2\ge \frac{1}{2}|\Bx-\By|^2 \ge 16l_j^2.
\]
From \eqref{eq:sigma-assump}, we know that
\[
(x_j-y_j)\int_{y_j}^{x_j}\sigma_j(t)\,{\D} t
\ge \sigma_0|x_j-y_j| (|x_j-y_j|-2l_j)
\ge \frac{1}{2}\sigma_0|x_j-y_j|^2
\ge \frac{1}{4}\sigma_0|\Bx-\By|^2.
\]
The proof is finished by inserting this inequality into \eqref{eq:Imrho-lower}.
\end{proof}

The next lemma summarizes the exponential decay of $G(\tilde{\Bx},\tilde{\By})$ and its derivatives. It follows from exponential damping of Hankel functions away from the real axis and Lemma~\ref{lem:rho-cmpr}. The proof is parallel to that of \cite[Lemma~2.5]{ChenXiangSTDDM_PartI}. We only provide the results here.

\begin{lemma}\label{lem:decay-stretchedG}
There is a constant $C$ independent of $k$ such that for all $\Bx,\By\in\bbR^2$ with $\Bx\neq \By$,
\begin{align}
&|\nabla^j_{\Bx} G(\tilde{\Bx},\tilde{\By})|\le
C k^j\big[(k|\Bx-\By|)^{-1/2}+j(k|\Bx-\By|)^{-j}\big]
e^{-\tfrac12 k\Im\rho(\tilde{\Bx},\tilde{\By})},\quad j=0,1,
    \label{eq:dG-decay}\\
&\bigl|\nabla_{\Bx}\nabla_{\By} G(\tilde{\Bx},\tilde{\By})\bigr|
\le C k^2\big[(k|\Bx-\By|)^{-1/2}+(k|\Bx-\By|)^{-2}\big]
e^{-\tfrac12 k\Im\rho(\tilde{\Bx},\tilde{\By})}.
    \label{eq:dxdyG-decay}
\end{align}
\end{lemma}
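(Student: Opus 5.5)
The plan is to reduce everything to uniform bounds for the Hankel functions $H_0^{(1)}$ and $H_1^{(1)}$ on the closed upper half-plane, evaluated at $z=k\rho$ where $\rho=\rho(\tilde{\Bx},\tilde{\By})$, and then compute derivatives by the chain rule.

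\emph{Range of the argument.} From \eqref{eq:Imrho-lower}, $\Im\rho\ge0$, because each term $(x_j-y_j)\int_{y_j}^{x_j}\sigma_j\ge0$. Also $\Re\rho>0$ by the choice of branch. So $z$ lies in the closed first quadrant, and by Lemma~\ref{lem:rho-cmpr}, $|z|\sim k|\Bx-\By|$.

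\emph{Hankel bounds.} I will use the standard estimates, valid for $\Im z\ge0$, $z\ne0$:
\[
|H_0^{(1)}(z)|\le C\big(|z|^{-1/2}+|\log|z||\,\mathbbm{1}_{|z|<1}\big)e^{-\Im z},\qquad
|H_1^{(1)}(z)|\le C\big(|z|^{-1/2}+|z|^{-1}\big)e^{-\Im z}.
\]
For large $|z|$ they follow from the integral representation $H_\nu^{(1)}(z)=C_\nu z^{-1/2}e^{\Vi z}\int_0^\infty e^{-t}t^{\nu-1/2}(1+\Vi t/(2z))^{\nu-1/2}\D t$, which is uniform in the upper half-plane; for small $|z|$ they follow from the series expansions. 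The logarithm in $H_0^{(1)}$ is absorbed by the weaker factor $e^{-\frac12\Im z}$ appearing in the lemma: for $|z|<1$ one has $|\log|z||\le C|z|^{-1/2}$. I expect establishing these bounds uniformly up to the boundary $\arg z\in[0,\pi/2]$ to be the main technical step; I would cite Chen--Xiang's Lemma~2.5 or the standard Watson-type asymptotics for it.

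\emph{Derivatives.} Write $\rho^2=\sum_j(\tilde x_j-\tilde y_j)^2$. Then
\[
\partial_{x_j}\rho=\alpha_j(x_j)(\tilde x_j-\tilde y_j)/\rho,
\]
so $|\nabla_{\Bx}\rho|\le C$ by Lemma~\ref{lem:rho-cmpr}, since $|\tilde x_j-\tilde y_j|\le(1+\sigma_0)|x_j-y_j|$ and $|\alpha_j|\le C$. Using $(H_0^{(1)})'=-H_1^{(1)}$, we get $\nabla_{\Bx}G=-\frac{\Vi}{4}kH_1^{(1)}(k\rho)\nabla_{\Bx}\rho$, which gives \eqref{eq:dG-decay} for $j=1$. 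The case $j=0$ is the $H_0^{(1)}$ bound directly.

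For the mixed derivative,
\[
\nabla_{\Bx}\nabla_{\By}G=-\frac{\Vi}{4}\Big[k^2(H_1^{(1)})'(k\rho)\nabla_{\Bx}\rho\otimes\nabla_{\By}\rho+kH_1^{(1)}(k\rho)\nabla_{\Bx}\nabla_{\By}\rho\Big].
\]
A direct computation shows $|\nabla_{\Bx}\nabla_{\By}\rho|\le C/|\rho|\le C|\Bx-\By|^{-1}$; note that $\alpha_j$ is piecewise constant, so differentiating across the interfaces causes no trouble. Combined with $(H_1^{(1)})'=H_0^{(1)}-H_1^{(1)}/z$, this gives the bound $k^2\big(|z|^{-1/2}+|z|^{-2}\big)e^{-\Im z}$, which is \eqref{eq:dxdyG-decay} after weakening $e^{-\Im z}$ to $e^{-\frac12 k\Im\rho}$.
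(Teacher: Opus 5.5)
Your proof is correct and follows the route the paper indicates (the paper omits the proof, citing Chen--Xiang's Lemma~2.5 and ``exponential damping of Hankel functions away from the real axis'' together with Lemma~\ref{lem:rho-cmpr}); your formulas for $\nabla_{\Bx}\rho$ and $\nabla_{\Bx}\nabla_{\By}\rho$, the use of $(H_1^{(1)})'=H_0^{(1)}-H_1^{(1)}/z$, and the observation that $\nabla_{\Bx}\nabla_{\By}$ differentiates each of $\tilde x$ and $\tilde y$ only once (so the piecewise-constant $\alpha_j$ cause no trouble) are all right. Two small points. The logarithm in $H_0^{(1)}$ is absorbed by $|z|^{-1/2}$, not by the exponential. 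More usefully, the integral representation you quote already gives $|H_0^{(1)}(z)|\le (2/(\pi|z|))^{1/2}e^{-\Im z}$ for every $z\neq 0$ with $\Im z\ge 0$, so the uniformity you flag as the main difficulty is immediate, and the factor $\tfrac12$ in the exponent is merely a weakening of your bound.
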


A key mechanism behind the UPML is that $\Im\rho(\tilde{\Bx},\tilde{\By})$ grows when $\Bx$
moves into the absorbing layer, which yields exponential decay of the stretched Green function. Combining Lemmas~\ref{lem:Imrho-large-sep} and \ref{lem:decay-stretchedG}, for $|\Bx-\By|>R$, we easily get
\begin{align}
&|\nabla^j_{\Bx} G(\tilde{\Bx},\tilde{\By})|\le
C k^j\big[(k|\Bx-\By|)^{-1/2}+j(k|\Bx-\By|)^{-j}\big]
e^{-\tfrac18\sigma_0 k|\Bx-\By|},\quad j=0,1,
    \label{eq:dG-exp}\\
&\bigl|\nabla_{\Bx}\nabla_{\By} G(\tilde{\Bx},\tilde{\By})\bigr|
\le C k^2\big[(k|\Bx-\By|)^{-1/2}+(k|\Bx-\By|)^{-2}\big]
e^{-\tfrac18\sigma_0 k|\Bx-\By|}. \label{eq:dxdyG-exp}
\end{align}

\subsection{The UPML formulations}

In view of the analytical continuation of the Green's function \eqref{eq:tG}, we define the UPML transformation of the scattering solution $u$ as follows
\begin{equation}\label{eq:tu}
\tilde{u}(\Bx) =\int_{\Omega_l}G(\tilde\Bx,\By) f(\By){\D} \By
+\int_\Gamma \frac{\partial u}{\partial\Bn}(\By)\, G(\tilde\Bx,\By) \D s(\By).
\end{equation}
Clearly $\tilde u = u$ in $\Omega_l$ and satisfies a variable-coefficient PDE of the form
\begin{equation}\label{eq:tu-PDE}
\Cl \tilde u = f \quad
\text{in}\;\; D^c,
\end{equation}
where $\Cl w := -J^{-1}\nabla\!\cdot\!\big(\bbA\nabla w \big) - k^2 w$ denotes the stretched Helmholtz operator acting on $w$, and
\begin{equation}\label{eq:A-J}
\bbA(\Bx) := \diag\big(\alpha_2(x_2)/\alpha_1(x_1),\,
\alpha_1(x_1)/\alpha_2(x_2)\big),\quad
J(\Bx) := \alpha_1(x_1)\alpha_2(x_2).
\end{equation}
Since $\bbA$ and $J$ are piecewise constant and have jumps across $\partial B_l$, \eqref{eq:tu-PDE} is understood in the distributional sense.
It holds in the classical sense in each subregion separated by the lines $x_j=\pm l_j$,
together with natural transmission conditions across those interfaces.

Similarly, we define the UPML fundamental solution by
\begin{equation}\label{eq:pml-G-def}
\widetilde G(\Bx,\By):=J(\By)G(\tilde{\Bx},\tilde{\By}).
\end{equation}
It is easy to verify that $\wt{G}$ satisfies (see \cite{ChenXiangSTDDM_PartI})
\begin{equation}\label{eq:pml-fundamental}
\Cl \wt G(\cdot,\By) =\delta_{\By}(\cdot)
\quad \hbox{in}\;\;\bbR^2.
\end{equation}
Notice that \(\widetilde G(\Bx,\By)\) is not symmetric with respect to $\Bx$ and $\By$ due to the factor \(J(\By)\). However, since \(J(\By)=1\) and \(\tilde{\By}=\By\) for \(\By\in B_l\), \eqref{eq:tu} can also be written as
\begin{equation}\label{eq:tu-1}
\tilde{u}(\Bx) =\int_{\Omega_l}\wt G(\Bx,\By) f(\By){\D} \By
+\int_\Gamma \frac{\partial u}{\partial\Bn}(\By)\, \wt G(\Bx,\By) \D s(\By).
\end{equation}
Using equation \eqref{eq:tu} and Lemmas~\ref{lem:Imrho-large-sep} and \ref{lem:decay-stretchedG}, we know that $\tilde{u}(\Bx)$ decays exponentially as $|\Bx|\to +\infty$. This indicates $\tilde u\in H^1(D^c)$.

A weak formulation of \eqref{eq:tu-PDE} is to find $\tilde{u}\in H^1_{\Gamma}(D^c)$ such that
\begin{align} \label{eq:weak-tu}
\mathscr{A}_{D^c}(\tilde{u},v) = (f,v)_{\Omega_l}
\quad \forall\,v\in H_{\Gamma}^1(D^c),
\end{align}
where for a domain $\Omega\subseteq \bbR^2$, $\mathscr{A}_{\Omega}:\Hone\times\Hone\to \bbC$ is a bilinear form defined by
\begin{align} \label{a-R2}
\mathscr{A}_{\Omega}(w,v):=\int_{\Omega}(\bbA\nabla w\cdot\nabla v -k^2Jwv) .
\end{align}

The exponential decay of $\tilde{u}$ inspires us to introduce the approximate problem of \eqref{eq:tu-PDE} with homogeneous condition on the truncation boundary
\begin{align}\label{eq:hu-model}
\Cl\hat u = f \quad \text{in}\;\; \Omega_L, \quad
\hat{u}=0 \quad \hbox{on}\;\; \Gamma\cup\Gamma_L.
\end{align}
A weak formulation of \eqref{eq:hu-model} is to find $\hat{u}\in H_0^1(\Omega_L)$ such that
\begin{align} \label{eq:weak-hu}
\mathscr{A}_{\Omega_L}(\hat{u},v) = (f,v)_{\Omega_l}
\quad \forall\,v\in\zbHone[\Omega_L].
\end{align}
In the next section, we shall first consider pure source problems for which $D^c=\bbR^2$ and $\Omega_L=B_L$. The inf-sup conditions for $\mathscr{A}_{\bbR^2}$ and $\mathscr{A}_{B_L}$ will be established. In section~\ref{sec:obstacle}, we further establish the inf-sup conditions for $\mathscr{A}_{D^c}$ and $\mathscr{A}_{\Omega_L}$
and derive error estimates between $u$ and $\hat{u}$.


\section{Useful estimates for the transformed Green's function}
\label{sec:green}

In this section, we prove some useful estimates concerning the transformed Green's function $G(\tilde\Bx,\tilde\By)$.
The argument is to distinguish two cases: $\Bx,\By\in B_l^c$, and one in $B_l$ and the other in $B_l^c$. The first case is obtained by using the exponential decay of $G(\tilde\Bx,\tilde\By)$, while the other relies on both near-field estimates and exponential decay of $G(\tilde\Bx,\tilde\By)$.

\subsection{Estimates of $G(\tilde\Bx,\tilde\By)$ for $\SN{\Bx-\By}\ge R$}

Let $R$ be the constant in Lemma~\ref{lem:Imrho-large-sep}. The estimate for the large separation $\SN{\Bx-\By}\ge R$ is easy by recalling \eqref{eq:dG-exp} and \eqref{eq:dxdyG-exp}.

\begin{lemma}\label{lem:far-field}
There is a constant $C$ which depends on $R$, $\sigma_0$, but not on $k$, such that, for $j=0,1$,
\begin{align}
&\sup_{\Bx\in\bbR^2}\int_{\{\By\in\bbR^2:\ |\Bx-\By|\ge R\}}
\big|\nabla^j_{\Bx}G(\tilde\Bx,\tilde\By)\big|\,{\D} \By
\le Ck^{j-2}\big(1+\sqrt{kR}\big) e^{-\frac{1}{8}\sigma_0kR},
    \label{eq:G-tail-L1}\\
&\int_{\Bx\in B_l}\int_{\{\By\in\bbR^2:\ |\Bx-\By|\ge R\}}
\big|\nabla_{\Bx}^j G(\tilde\Bx,\tilde\By)\big|^2\,{\D} \By\,{\D} \Bx
\le	Ck^{2j-2}e^{-\frac{1}{4}\sigma_0kR}.  \label{eq:G-tail-L2}
\end{align}
\end{lemma}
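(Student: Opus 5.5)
Both bounds follow by inserting the pointwise exponential bound \eqref{eq:dG-exp} and integrating in polar coordinates centered at $\Bx$. Fix $\Bx\in\bbR^2$ and write $r=|\Bx-\By|$. Since $r\ge R$, Lemma~\ref{lem:Imrho-large-sep} applies, so for $j=0,1$ \eqref{eq:dG-exp} gives
\[
\big|\nabla^j_{\Bx}G(\tilde\Bx,\tilde\By)\big|\le Ck^j\big[(kr)^{-1/2}+j(kr)^{-j}\big]e^{-\frac18\sigma_0 kr}.
\]
Because $kR\ge R$ (recall $k\ge1$), the term $(kr)^{-1}$ for $j=1$ is bounded by $C_R (kr)^{-1/2}$. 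Hence in both cases the bound is $Ck^j(kr)^{-1/2}e^{-\frac18\sigma_0kr}$, with $C$ depending on $R$.

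\textbf{Proof of \eqref{eq:G-tail-L1}.} Passing to polar coordinates, the integral is bounded by
\[
2\pi Ck^j\int_R^\infty (kr)^{-1/2}e^{-\frac18\sigma_0kr}\,r\,\D r.
\]
Substituting $t=kr$ turns this into
\[
2\pi Ck^{j-2}\int_{kR}^\infty t^{1/2}e^{-\frac18\sigma_0 t}\,\D t.
\]
The incomplete-gamma tail estimate $\int_a^\infty t^{1/2}e^{-ct}\,\D t\le C_c(1+\sqrt a)e^{-ca}$ can be proved by integrating by parts once, or by writing $t=a+s$ and using $\sqrt{a+s}\le\sqrt a+\sqrt s$. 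Applying it with $a=kR$ and $c=\sigma_0/8$ gives exactly the factor $(1+\sqrt{kR})e^{-\frac18\sigma_0kR}$. The bound holds uniformly in $\Bx$, so the supremum is controlled.

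\textbf{Proof of \eqref{eq:G-tail-L2}.} Squaring the pointwise bound gives $Ck^{2j}(kr)^{-1}e^{-\frac14\sigma_0kr}$. In polar coordinates the inner integral becomes
\[
Ck^{2j}\int_R^\infty (kr)^{-1}e^{-\frac14\sigma_0kr}\,r\,\D r=Ck^{2j-1}\int_R^\infty e^{-\frac14\sigma_0 kr}\,\D r=Ck^{2j-2}\,\frac{4}{\sigma_0}\,e^{-\frac14\sigma_0kR}.
\]
This is uniform in $\Bx$. Integrating over $\Bx\in B_l$ only contributes the factor $|B_l|=4l_1l_2$, which can be absorbed into $C$ because $l_1,l_2$ are tied to $R$. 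This yields \eqref{eq:G-tail-L2}.

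The only point needing care is the treatment of the $j(kr)^{-j}$ term. It must be absorbed using $kr\ge kR\ge R$, so that no negative power of $kR$ is lost and the constant depends only on $R$ and $\sigma_0$. The tail integral in the first estimate is the sole non-explicit computation, and it is elementary.
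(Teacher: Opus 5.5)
Your proposal is correct and follows essentially the same route as the paper: insert the pointwise bound \eqref{eq:dG-exp}, pass to polar coordinates about $\Bx$, substitute $t=kr$ and use $\sqrt{kR+s}\le\sqrt{kR}+\sqrt{s}$ for the $L^1$ tail, then integrate the $\Bx$-uniform $L^2$ tail over the bounded set $B_l$. You also fill in two details the paper leaves implicit by calling the $j=1$ case ``parallel'': the absorption $(kr)^{-1}\le R^{-1/2}(kr)^{-1/2}$, and the bound $|B_l|\le R^2/8$ that keeps the constant dependent only on $R$ and $\sigma_0$.
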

\begin{proof}
We only prove the lemma for $j=0$. The proof for $j=1$ is parallel.
Using \eqref{eq:dG-exp} and the polar coordinates centered at $\Bx$, we have
\begin{align*}
\int_{|\Bx-\By|\ge R}\big|G(\tilde\Bx,\tilde\By)\big|\,{\D} \By
\le C\int_R^\infty (kr)^{-1/2}e^{-\frac{1}{8}\sigma_0 kr} r{\D} r
=Ck^{-2}\int_{kR}^\infty e^{-\frac{1}{8}\sigma_0t} t^{1/2}{\D} t.
\end{align*}
Write $t=kR+s$ with $s\ge0$. Then
\[
\int_{kR}^\infty e^{-\frac{1}{8}\sigma_0t} t^{1/2}{\D} t
\le e^{-\frac{1}{8}\sigma_0kR} \int_0^\infty e^{-\frac{1}{8}\sigma_0 s}
\big(\sqrt{s}+\sqrt{kR}\big){\D} s
\le C \big(1+\sqrt{kR}\big) e^{-\frac{1}{8}\sigma_0kR}.
\]
This yields \eqref{eq:G-tail-L1} for $j=0$.
Similarly, we use \eqref{eq:dG-exp} and find that
\begin{align*}
\int_{|\Bx-\By|\ge R}\big|G(\tilde\Bx,\tilde\By)\big|^2{\D} \By
\le Ck^{-1}\int_R^\infty e^{-\frac{1}{4}\sigma_0 kr}{\D} r
\le Ck^{-2}e^{-\frac{1}{4}\sigma_0kR}.
\end{align*}
Finally, since $B_l$ is bounded, the above inequality leads to \eqref{eq:G-tail-L2} for $j=0$.
\end{proof}

\subsection{Estimates of Carleman operators}

Next we present a lemma on the estimates of some Carleman operators. It will be used in the estimate of $G(\tilde\Bx,\tilde\By)$. Write $X:=\Ltwo[0,\infty]$ for convenience.

\begin{lemma}\label{lem:carleman}
Suppose \(\gamma>0\) is a constant and let $K_\gamma(t,s)$ be any of the functions $e^{-\gamma t(t+s)}$, $e^{-\gamma (t-s)^2}$, $e^{-\gamma (t+s)^2}$, and $e^{-\gamma (t^2+s^2)}$. Define the Carleman operator
\begin{align*}
(\Ch_\gamma g)(t) = \int_0^\infty
K_{\gamma}(t,s)g(s)\D s \quad \forall\, g\in X.
\end{align*}
Then there exists a constant $C$ independent of $\gamma$ such that $\|\Ch_\gamma\|_{\Cl(X)}\le C\gamma^{-1/2}$.
\end{lemma}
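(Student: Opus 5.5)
First, scaling. Substituting $t=\gamma^{-1/2}\tau$ and $s=\gamma^{-1/2}\varsigma$ turns each of the four kernels into the same kernel with $\gamma=1$. For example, $e^{-\gamma t(t+s)}$ becomes $e^{-\tau(\tau+\varsigma)}$, because every exponent is a homogeneous quadratic in $(t,s)$. Define the unitary dilation $(U_\gamma g)(\tau):=\gamma^{-1/4}g(\gamma^{-1/2}\tau)$ on $X$. A direct change of variables in the integral gives
\begin{align*}
\Ch_\gamma=\gamma^{-1/2}\,U_\gamma^{-1}\Ch_1U_\gamma .
\end{align*}
The Jacobian $\D s=\gamma^{-1/2}\D\varsigma$ supplies exactly the factor $\gamma^{-1/2}$. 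Hence $\|\Ch_\gamma\|_{\Cl(X)}=\gamma^{-1/2}\|\Ch_1\|_{\Cl(X)}$, and it suffices to show that $\Ch_1$ is bounded on $X$ for each of the four kernels.

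Second, the four kernels with $\gamma=1$ are all nonnegative, and on $[0,\infty)^2$ they satisfy pointwise
\begin{align*}
e^{-(t+s)^2}\le e^{-(t^2+s^2)},\qquad e^{-t(t+s)}\le e^{-t^2}.
\end{align*}
The first inequality holds because $ts\ge0$, and the second because $ts\ge0$ as well. Since the operators are positive, we may dominate the kernel and bound three cases directly:

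\emph{Kernel $e^{-(t-s)^2}$.} This is a convolution kernel restricted to the half-line. Extend $g$ by zero and apply Young's inequality: the norm is at most $\|e^{-\tau^2}\|_{L^1(\bbR)}=\sqrt\pi$.

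\emph{Kernels $e^{-(t^2+s^2)}$ and $e^{-(t+s)^2}$.} These are Hilbert–Schmidt, since $\int_0^\infty\int_0^\infty e^{-2(t^2+s^2)}\D s\D t<\infty$. The second kernel is dominated pointwise by the first, so it is also bounded.

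Third, the kernel $e^{-t(t+s)}$ is the one step that needs care. It is not Hilbert–Schmidt: for fixed small $t$, its decay in $s$ is only $e^{-ts}$, so the double integral of its square is $\int_0^\infty e^{-2t^2}(2t)^{-1}\D t=\infty$. I would use Schur's test with the weight $p(x)=x^{-1/2}$. For the row sums,
\begin{align*}
\int_0^\infty e^{-t^2-ts}s^{-1/2}\D s=e^{-t^2}\sqrt{\pi}\,t^{-1/2}\le \sqrt\pi\, t^{-1/2}.
\end{align*}
For the column sums,
\begin{align*}
\int_0^\infty e^{-t^2-ts}t^{-1/2}\D t\le\int_0^\infty e^{-ts}t^{-1/2}\D t=\sqrt{\pi}\,s^{-1/2}.
\end{align*}
Both sums are therefore bounded by $\sqrt\pi$ times the weight, and Schur's test gives $\|\Ch_1\|\le\sqrt\pi$. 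This is essentially Hilbert's inequality for the Laplace-type kernel $e^{-ts}$, which is the classical Carleman operator. The factor $e^{-t^2}$ only helps, and the bound for $\Ch_1$ is an absolute constant.

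Combining the three steps gives $\|\Ch_\gamma\|_{\Cl(X)}\le C\gamma^{-1/2}$ with, for example, $C=\sqrt\pi$ for all four kernels.
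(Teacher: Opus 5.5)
Your proof is correct. For the one nontrivial kernel, $e^{-t(t+s)}$, it takes a more direct route than the paper.

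\textbf{How the paper handles $e^{-t(t+s)}$.} The paper uses the same unitary rescaling to reach $\gamma=1$. It then writes the kernel as $e^{-t^2}e^{-ts}$ and drops the factor $e^{-t^2}\le 1$, which reduces the problem to the Laplace transform $\Cm_0$. It bounds $\Cm_0$ through the product $\Cm_0^*\Cm_0=\Cc$. The kernel of $\Cc$ is $(t+s)^{-1}$, the classical Carleman (Hilbert) kernel. The weighted Schur test with weight $s^{-1/2}$ gives $\|\Cc\|\le\pi$, hence $\|\Cm_0\|\le\sqrt{\pi}$.

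\textbf{How your route differs.} You apply the same Schur test, with the same weight, directly to $e^{-t(t+s)}$, using $\int_0^\infty e^{-ts}x^{-1/2}\,\D x=\sqrt{\pi}\,x^{-1/2}$ evaluated in each variable. This skips the $T^*T$ step and gives the same constant $\sqrt{\pi}$. The paper's detour mainly explains the name ``Carleman operator''; your argument is shorter.

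You also handle all four kernels with a single scaling, which is valid because each exponent is a homogeneous quadratic. You supply the two cases the paper omits:
\begin{itemize}
\item the rank-one (Hilbert--Schmidt) bound for $e^{-(t^2+s^2)}$;
\item pointwise domination for $e^{-(t+s)^2}$, which is legitimate for nonnegative kernels.
\end{itemize}

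\textbf{Two cosmetic points.}
\begin{itemize}
\item The displayed inequality $e^{-t(t+s)}\le e^{-t^2}$ is never used, and $e^{-t^2}$ alone does not define a bounded operator on $X$. What your column-sum estimate actually uses is $e^{-t(t+s)}\le e^{-ts}$.
\item The classical Carleman operator has kernel $(t+s)^{-1}$, which is the square of the Laplace transform. It is not the kernel $e^{-ts}$ itself.
\end{itemize}
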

\begin{proof}
First we consider \(K_\gamma(t,s)=e^{-\gamma t(t+s)}\). Define the unitary scaling
$(\Cu_\gamma g)(s)=\gamma^{-1/4}g(s/\sqrt{\gamma})$. It is clear that $\N{\Cu_\gamma g}_{X} = \N{g}_{X}$.
A change of variables gives $\Cu_\gamma \Ch_\gamma =\gamma^{-1/2}\Cm\Cu_\gamma$, where
\[
(\Cm v)(t) :=\int_0^\infty e^{-t(t+s)}v(s)\,\D s
= e^{-t^2}(\Cm_0 v)(t),\quad
(\Cm_0 v)(t) :=\int_0^\infty e^{-ts}v(s)\,\D s.
\]
It is easy to see that $\N{\Cm}_{\Cl(X)}\le \N{\Cm_0}_{\Cl(X)}$.
We consider the product operator
\begin{align*}
(\Cm_0^*\Cm_0 v)(t) = \int_0^\infty e^{-t s}\int_0^\infty e^{-s s'}v(s')\D s'\D s
=\int_0^\infty \frac{v(s)}{t+s}\D s
:=(\Cc v)(t),
\end{align*}
where $\Cc$ is the integral operator with the kernel $K_0(t,s):= (t+s)^{-1}$. We use the weighted Schur test with $w(s)=s^{-1/2}$ to estimate $\Cc$ \cite{Zhao2015,Zhao2023correction}. It is clear that
\[
\int_0^\infty \frac{w(s)}{t+s}\D s = \int_0^\infty \frac{s^{-1/2}}{t+s}\D s
=t^{-1/2} \int_0^\infty \frac{y^{-1/2}}{1+y}\D y
=\pi w(t).
\]
By the symmetry of $K_0(t,s)$, the second Schur condition holds naturally. This shows
$\|\mathcal C\|_{\Cl(X)}\le \pi$. Therefore, $\N{\Cm}_{\Cl(X)}\le \N{\Cm_0}_{\Cl(X)}\le \sqrt{\pi}$, and
$\|\Ch_\gamma\|_{\Cl(X)}\le \gamma^{-1/2}\N{\Cm}_{\Cl(X)} \le \pi^{1/2}\gamma^{-1/2}$.

For \(K_\gamma(t,s)=e^{-\gamma (t-s)^2}\), we extend \(g\) by zero to \(\mathbb R\) and denote the extension by \(\tilde g\). Writing
\(E_\gamma(t):=e^{-\gamma t^2}\) and using Young's convolution inequality, we have
\[
\|\Ch_\gamma g\|_{X}
\le \|E_\gamma*\tilde g\|_{L^2(\mathbb R)}
\le \|E_\gamma\|_{L^1(\mathbb R)}\|\tilde g\|_{L^2(\mathbb R)}
= C\gamma^{-1/2}\|g\|_{X}.
\]
The proofs for the other two cases of $K(t,s)$ are similar and omitted. The proof is finished.
\end{proof}

\subsection{Dyadic estimates of the transformed Green's kernel}

Let \(R\) be the constant in Lemma~\ref{lem:Imrho-large-sep} and $M :=\lceil 1+\log_2(Rk)\rceil$ be the smallest integer no smaller than $1+ \log_2(Rk)$. Define $r_m:= 2^mk^{-1}$ for $m=0,\cdots, M$.
It is clear that $r_M>r_{M-1}\ge R$. For fixed $\Bx\in\bbR^2$, there exists an $\Bx$-dependent partition of unity
\begin{align}\label{eq:chim}
&\sum_{m=0}^M\chi_m(\Bx,\cdot) =1, \quad
\chi_m(\Bx,\cdot)\ge 0, \quad \chi_m(\Bx,\cdot)\in C^\infty(\bbR^2), \\
&\supp(\chi_0(\Bx,\cdot)) = \ol{B_{r_1}(\Bx)}, \quad
\supp(\chi_M(\Bx,\cdot)) =\bbR^2\backslash B_{r_{M-1}}(\Bx), \notag \\
&\supp(\chi_m(\Bx,\cdot)) = \ol{B_{r_{m+1}}(\Bx)\backslash
B_{r_{m-1}}(\Bx)}, \;\; 0< m< M, \notag
\end{align}
where $B_r(\Bx)$ denotes the open ball whose radius and center are $r$ and $\Bx$, respectively.

\begin{lemma}\label{lem:chi0M}
There exists a constant $C$ independent of $k$ such that, for any $g,v\in\Ltwo[\bbR^2]$,
\begin{align*}
\bigg|\int_{\bbR^2}\int_{\bbR^2}\chi_0(\Bx,\By)
G(\tilde\Bx,\tilde\By)g(\By)v(\Bx)\D\By \D\Bx\bigg|
\le\,& Ck^{-2}\NLtwo[\bbR^2]{g}\NLtwo[\bbR^2]{v}, \\
\bigg|\int_{\bbR^2}\int_{\bbR^2}\chi_M(\Bx,\By)
G(\tilde\Bx,\tilde\By)g(\By)v(\Bx)\D\By \D\Bx\bigg|
\le\,& Ck^{-3/2}e^{-\frac{1}{8}\sigma_0kR} \NLtwo[\bbR^2]{g}\NLtwo[\bbR^2]{v}.
\end{align*}
\end{lemma}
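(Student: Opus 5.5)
Both bounds follow from Schur's test (equivalently Young's inequality for an integral operator whose kernel is dominated by a function of $|\Bx-\By|$). If $|K(\Bx,\By)|\le \kappa(|\Bx-\By|)$, then
\[
\Big|\iint K(\Bx,\By)g(\By)v(\Bx)\,\D\By\,\D\Bx\Big|
\le \Big(\int_{\bbR^2}\kappa(|\Bz|)\,\D\Bz\Big)\NLtwo[\bbR^2]{g}\NLtwo[\bbR^2]{v}.
\]
This is just Cauchy--Schwarz with the symmetric weight $|K|$:
\[
|K||g||v|\le \tfrac12|K|\big(|g(\By)|^2+|v(\Bx)|^2\big),
\]
together with the bounds $\sup_{\Bx}\int|K|\,\D\By$ and $\sup_{\By}\int|K|\,\D\Bx$. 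So the task reduces to bounding the $L^1$ norm of the truncated kernel in each variable. We use $0\le\chi_m\le1$ and that $|\Bx-\By|$ controls the kernel through Lemma~\ref{lem:decay-stretchedG}. The support condition $\chi_m(\Bx,\By)\neq0$ depends only on $|\Bx-\By|$, so it is symmetric in $\Bx$ and $\By$.

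\textbf{The $\chi_0$ term.} On the support, $|\Bx-\By|\le r_1=2k^{-1}$. By \eqref{eq:dG-decay} with $j=0$ and $e^{-\frac12k\Im\rho}\le1$ (note $\Im\rho\ge0$ by \eqref{eq:Imrho-lower}), we get $|G(\tilde\Bx,\tilde\By)|\le C(k|\Bx-\By|)^{-1/2}$. A slightly sharper route uses the logarithmic singularity of $H_0^{(1)}$ together with Lemma~\ref{lem:rho-cmpr}, but the crude bound already suffices. In polar coordinates,
\[
\sup_{\Bx}\int_{|\Bx-\By|\le 2/k}|G|\,\D\By\le C\int_0^{2/k}(kr)^{-1/2}r\,\D r
= Ck^{-2}\int_0^2 t^{1/2}\,\D t=Ck^{-2}.
\]
The same bound holds with the roles of $\Bx$ and $\By$ exchanged, because the pointwise bound depends only on $|\Bx-\By|$. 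Schur's test then gives $Ck^{-2}$.

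\textbf{The $\chi_M$ term.} On the support, $|\Bx-\By|\ge r_{M-1}\ge R$. Lemma~\ref{lem:far-field}, estimate \eqref{eq:G-tail-L1} with $j=0$, gives the row bound $Ck^{-2}(1+\sqrt{kR})e^{-\frac18\sigma_0kR}$. For the column bound, apply \eqref{eq:dG-exp} directly: it again depends only on $|\Bx-\By|$, so the identical polar computation in $\Bx$ yields the same constant. Since $k\ge1$ and $R$ is fixed, $1+\sqrt{kR}\le C\sqrt{k}$, and the product is $Ck^{-3/2}e^{-\frac18\sigma_0kR}$.

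\textbf{Remaining checks.} The only delicate points are the following.
\begin{itemize}
\item[(i)] Check that the pointwise Hankel estimate \eqref{eq:dG-decay} is valid uniformly for small arguments. For $j=0$ it is stated for all $\Bx\neq\By$, so this is fine; the logarithmic singularity is dominated by $t^{-1/2}$ for small $t$.
\item[(ii)] Check that $\sup_{\By}$ can be taken for the $\Bx$-integral, since \eqref{eq:G-tail-L1} is stated only with $\sup_{\Bx}$. The pointwise bounds are radial in $\Bx-\By$, so the proof of Lemma~\ref{lem:far-field} applies verbatim.
\end{itemize}
The main obstacle is essentially nil. Schur's test, rather than a Hilbert--Schmidt bound, is what avoids the loss of a power of $k$ for $\chi_0$.
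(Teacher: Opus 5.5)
Your proposal is correct and follows essentially the paper's route: you bound $|G(\tilde\Bx,\tilde\By)|$ pointwise by a radial function of $|\Bx-\By|$ via Lemma~\ref{lem:decay-stretchedG}, and then apply a Schur/Cauchy--Schwarz argument that exploits translation invariance. The differences are minor. For the $\chi_0$ term the paper uses Cauchy--Schwarz with weight $|\Bx-\By|^{-1}$ plus Fubini rather than your $L^1$-kernel Schur bound. For $\chi_M$ it bounds $(k|\Bx-\By|)^{-1/2}\le (kR)^{-1/2}$ directly instead of quoting \eqref{eq:G-tail-L1}. Both give the same powers $k^{-2}$ and $k^{-3/2}e^{-\frac18\sigma_0kR}$.
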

\begin{proof}
Recall from Lemma~\ref{lem:decay-stretchedG} that
$|G(\tilde\Bx,\tilde\By)|\le C (k|\Bx-\By|)^{-\frac{1}{2}}
e^{-\frac{1}{2}k\Im\rho(\tilde\Bx,\tilde\By)}$.
Since $\chi_0(\Bx,\cdot)$ vanishes outside $B_{r_1}(\Bx)$, we have
\begin{align*}
\bigg|\int_{\bbR^2}\int_{\bbR^2}\chi_0(\Bx,\By) G(\tilde\Bx,\tilde\By)
g(\By)v(\Bx)\D\By \D\Bx\bigg|
\le\,& Ck^{-1/2}\NLtwo[\bbR^2]{v}
\bigg(\int_{\bbR^2}\bigg(\int_{B_{r_1}(\Bx)}
|\Bx-\By|^{-1/2}|g(\By)|\D\By\bigg)^2 \D\Bx\bigg)^{1/2} \\
\le\,& Ck^{-1/2}r_1^{1/2}\NLtwo[\bbR^2]{v}
\bigg(\int_{\bbR^2}\int_{B_{r_1}(\Bx)}
|g(\By)|^2\D\By\D\Bx\bigg)^{1/2} \\
=\,& Ck^{-1/2}r_1^{1/2}\NLtwo[\bbR^2]{v}
\bigg(\int_{B_{r_1}({\bf 0})} \int_{\bbR^2}
|g(\Bz+\Bx)|^2\D\Bx\D\Bz\bigg)^{1/2} \\
\le\,& Ck^{-1/2}r_1^{3/2}\NLtwo[\bbR^2]{g}\NLtwo[\bbR^2]{v}.
\end{align*}
Since \(r_1=2/k\), the last bound is \(Ck^{-2}\NLtwo[\bbR^2]{g}
\NLtwo[\bbR^2]{v}\), which proves the first estimate.

Next, using \eqref{eq:dG-exp} and the fact that $\chi_M(\Bx,\cdot)\equiv 0$ in $B_R(\Bx)$, we deduce that
\begin{align*}
&\bigg|\int_{\bbR^2}\int_{\bbR^2}\chi_M(\Bx,\By) G(\tilde\Bx,\tilde\By)
g(\By)v(\Bx)\D\By \D\Bx\bigg| \\
\le\,& Ck^{-1/2} \bigg(\int_{\bbR^2} \int_{\bbR^2\backslash B_{R}(\Bx)}
e^{-\frac{1}{8}\sigma_0k |\Bx-\By|}|g(\By)|^2\D\By \D\Bx\bigg)^{1/2}
\bigg(\int_{\bbR^2} \int_{\bbR^2\backslash B_{R}(\Bx)}
e^{-\frac{1}{8}\sigma_0k |\Bx-\By|}|v(\Bx)|^2\D\By \D\Bx\bigg)^{1/2}.
\end{align*}
Write $\Bz=\By-\Bx$. It is easy to see that
\begin{align*}
\int_{\bbR^2} \int_{\bbR^2\backslash B_{R}(\Bx)}
e^{-\frac{1}{8}\sigma_0k |\Bx-\By|}|g(\By)|^2\D\By \D\Bx
=\,& \int_{\bbR^2\backslash B_{R}({\bf 0})}
e^{-\frac{1}{8}\sigma_0k |\Bz|}
\bigg(\int_{\bbR^2} |g(\Bz+\Bx)|^2\D\Bx\bigg) \D\Bz
\le Ck^{-1}e^{-\frac{1}{8}\sigma_0 k R}\NLtwo[\bbR^2]{g}^2,\\
\int_{\bbR^2} \int_{\bbR^2\backslash B_{R}(\Bx)}
e^{-\frac{1}{8}\sigma_0k |\Bx-\By|}|v(\Bx)|^2\D\By \D\Bx
\le\,&  Ck^{-1}e^{-\frac{1}{8}\sigma_0 k R}\NLtwo[\bbR^2]{v}^2.
\end{align*}
This finishes the proof.
\end{proof}

It remains to estimate the integrals containing $\chi_m$ for $0<m<M$.
First we consider the integral on $B_l^c\times B_l$.
Clearly \(B_l^c\) is the union of four sub-domains
\begin{align}\label{eq:Sj}
S_j^\pm= \big\{\Bz\in B_l^c:\pm z_j>l_j\big\}, \quad j=1,2.
\end{align}

\begin{lemma}\label{lem:chim}
Suppose \(g\in L^2(B_l)\) and \(v\in L^2(B_l^c)\).
There exists a constant \(C\) independent of \(k\) and \(m\) such that,
for \(0<m<M\),
\[
\bigg|\int_{B_l^c}\int_{B_l}\chi_m(\Bx,\By)
G(\tilde\Bx,\tilde\By)g(\By)v(\Bx)\,\D\By \D\Bx\bigg|
\le C2^m k^{-2}\|g\|_{L^2(B_l)}\|v\|_{L^2(B_l^c)} .
\]
\end{lemma}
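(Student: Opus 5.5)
The idea is to combine the pointwise bound of Lemma~\ref{lem:decay-stretchedG} on the annulus $\supp\chi_m(\Bx,\cdot)$ with the exponential damping that $\Bx$ feels as it penetrates the layer. The damping is then converted into an operator bound via the Carleman estimate of Lemma~\ref{lem:carleman}.

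A plain Schur test using only $|G(\tilde\Bx,\tilde\By)|\le C(kr_{m-1})^{-1/2}\le C2^{-m/2}$ on the annulus would give $2^{-m/2}r_{m+1}^2\sim 2^{3m/2}k^{-2}$. This is too large by a factor $2^{m/2}$, and recovering that factor from the PML damping is the main point.

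\textbf{Step 1 (decomposition and damping).} Since $B_l^c=\bigcup S_j^\pm$, I bound the integral with $v$ replaced by $v\mathbbm{1}_{S_j^\pm}$ for each of the four pieces and sum. Overlaps at corners only cost a constant. By symmetry it suffices to treat $\Bx\in S_1^+$, i.e.\ $x_1>l_1$, while $\By\in B_l$ gives $|y_1|<l_1$.

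Every term on the right of \eqref{eq:Imrho-lower} is nonnegative, because $(x_j-y_j)\int_{y_j}^{x_j}\sigma_j\ge0$. Hence I may keep only the $j=1$ term. Set $t:=x_1-l_1>0$ and $s:=l_1-y_1>0$, so that $x_1-y_1=t+s$ and $\int_{y_1}^{x_1}\sigma_1=\sigma_0 t$. Since $|\Bx-\By|\le r_{m+1}$ on $\supp\chi_m$, this gives
\[
k\Im\rho(\tilde\Bx,\tilde\By)\ge \frac{\sigma_0 k\,t(t+s)}{r_{m+1}} .
\]
Combined with \eqref{eq:dG-decay} for $j=0$ and $|\Bx-\By|\ge r_{m-1}$, this yields
\[
|\chi_m G(\tilde\Bx,\tilde\By)|\le C2^{-m/2}e^{-\gamma t(t+s)}\mathbbm{1}_{\{|x_2-y_2|\le r_{m+1}\}},\qquad \gamma:=\tfrac{\sigma_0k}{2r_{m+1}}=\sigma_0k^2 2^{-m-2}.
\]

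\textbf{Step 2 (tangential integration).} Write $g(\By)=g_s(y_2)$ and $v(\Bx)=v_t(x_2)$, extended by zero. For fixed $(t,s)$, the $(x_2,y_2)$-integral of $|g_s(y_2)||v_t(x_2)|\mathbbm{1}_{\{|x_2-y_2|\le r_{m+1}\}}$ is bounded by Young's inequality (the kernel has $L^1$ norm $2r_{m+1}$) by
\[
2r_{m+1}\,a(s)\,b(t),\qquad a(s):=\|g_s\|_{L^2(\bbR)},\quad b(t):=\|v_t\|_{L^2(\bbR)} .
\]
Here $\|a\|_X\le\|g\|_{L^2(B_l)}$ and $\|b\|_X\le\|v\|_{L^2(B_l^c)}$.

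\textbf{Step 3 (Carleman bound).} What remains is $C2^{-m/2}r_{m+1}\int_0^\infty\int_0^\infty e^{-\gamma t(t+s)}a(s)b(t)\,\D s\,\D t$. By Lemma~\ref{lem:carleman} with kernel $e^{-\gamma t(t+s)}$, this is at most
\[
C2^{-m/2}r_{m+1}\gamma^{-1/2}\|a\|_X\|b\|_X .
\]
Substituting $r_{m+1}=2^{m+1}k^{-1}$ and $\gamma^{-1/2}=C2^{m/2}k^{-1}$ gives $C2^mk^{-2}\|g\|_{L^2(B_l)}\|v\|_{L^2(B_l^c)}$, as claimed.

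The only delicate step is Step 1: extracting a damping factor of exactly the form $e^{-\gamma t(t+s)}$ with $\gamma\sim k/r_m$ uniformly on the annulus. This hinges on using the upper radius $r_{m+1}$ in the denominator of \eqref{eq:Imrho-lower} and on the nonnegativity of both summands. The other pieces $S_1^-$ and $S_2^\pm$ are identical after reflection or swapping coordinates.
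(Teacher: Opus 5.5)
Your proposal is correct and follows essentially the same route as the paper. Both arguments restrict to $S_1^+$, keep only the $j=1$ term of \eqref{eq:Imrho-lower} to get $\Im\rho\ge\sigma_0 t(t+s)/|\Bx-\By|$, integrate out the tangential variables (your Young's inequality plays the role of the paper's Cauchy--Schwarz) to gain the factor $r_m$, and close with the Carleman bound of Lemma~\ref{lem:carleman} with $\gamma\sim k/r_m$; the only differences are that your $s$ and $t$ are swapped relative to the paper and you write $r_{m+1}$ where the paper writes $2r_m$.
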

\begin{proof}
It suffices to consider the domain $\Bx\in S_1^+$. The proofs for other cases are parallel.
Write $s =x_1-l_1$, $t=l_1-y_1$, and $r=|\Bx-\By|$ for convenience. Using \eqref{eq:Imrho-lower}, we have
\begin{align*}
\Im \rho(\tilde\Bx,\tilde\By)
\ge \sigma_0(x_1-y_1)(x_1-l_1)/r = \sigma_0 s (s+t)/r.
\end{align*}
Extend $g$ and $v|_{S_1^+}$ by zero to the exteriors of $B_l$ and $S_1^+$, respectively, and denote the extensions by $\tilde{g}(t,y_2)$ and $\tilde{v}(s,x_2)$, respectively. Using Lemma~\ref{lem:decay-stretchedG}, we deduce that
\begin{align*}
&\bigg|\int_{S_1^+}\int_{B_l}\chi_m(\Bx,\By)
G(\tilde\Bx,\tilde\By)g(\By)v(\Bx)\D\By\D\Bx\bigg| \\
\le\,& C(kr_m)^{-1/2}\int_{S_1^+}\int_{B_l}
\chi_m(\Bx,\By) e^{-\frac{1}{2}k\Im\rho(\tilde\Bx,\tilde\By)}
|g(\By)v(\Bx)|\D\By\D\Bx  \\
\le\,& C(kr_m)^{-1/2}\int_0^{\infty}\int_0^{\infty}
\bigg(\int_{\bbR^2}\chi_m(l_1+s,x_2,l_1-t,y_2)
|\tilde{v}(s,x_2)\tilde{g}(t,y_2)|\D y_2\D x_2\bigg)
e^{-\frac{\sigma_0k}{4r_m}s(s+t)}\D t \D s.
\end{align*}
Since
$\chi_m(l_1+s,x_2,l_1-t,y_2)=0$ for $
(s+t)^2+(x_2-y_2)^2>r_{m+1}^2=4r_m^2$,
the integral in brackets admits
\begin{align} \label{eq:chim-1}
&\int_{\bbR^2}\chi_m(l_1+s,x_2,l_1-t,y_2)
|\tilde{v}(s,x_2)\tilde{g}(t,y_2)|\D y_2\D x_2 \notag \\
\le\,& \bigg[\int_{\bbR^2}\chi_m(l_1+s,x_2,l_1-t,y_2)
|\tilde{v}(s,x_2)|^2\D y_2\D x_2\bigg]^{\frac{1}{2}}
\bigg[\int_{\bbR^2}\chi_m(l_1+s,x_2,l_1-t,y_2)
|\tilde{g}(t,y_2)|^2\D y_2\D x_2\bigg]^{\frac{1}{2}} \notag\\
\le\,& Cr_m\NLtwo[\bbR]{\tilde{v}(s,\cdot)}
\NLtwo[\bbR]{\tilde{g}(t,\cdot)}.
\end{align}
Write $\xi(s):=\NLtwo[\bbR]{\tilde{v}(s,\cdot)}$, $\eta(t):= \NLtwo[\bbR]{\tilde{g}(t,\cdot)}$, and $\gamma=\frac{\sigma_0k}{4r_m}$. Then combining the above two inequalities and using Lemma~\ref{lem:carleman}, we obtain
\begin{align} \label{eq:chim-2}
\bigg|\int_{S_1^+}\int_{B_l}\chi_m(\Bx,\By)
G(\tilde\Bx,\tilde\By)g(\By)v(\Bx)\D\By\D\Bx\bigg|
\le\,& C(r_m/k)^{1/2}\int_0^{\infty}\int_0^{\infty}
\NLtwo[\bbR]{\tilde{v}(s,\cdot)}\NLtwo[\bbR]{\tilde{g}(t,\cdot)}
e^{-\gamma s(s+t)}\D s \D t \notag \\
\le\,& C(r_m/k)^{1/2}\NLtwo[0,\infty]{\xi}
\NLtwo[0,\infty]{\Ch_\gamma\eta} \notag \\
\le\,& Cr_m k^{-1}\NLtwo[S_1^+]{v}\NLtwo[B_l]{g}.
\end{align}
Summing over the four strips \(S_j^\pm\), \(j=1,2\), we get
\[
\bigg|\int_{B_l^c}\int_{B_l}\chi_m(\Bx,\By)
G(\tilde\Bx,\tilde\By)g(\By)v(\Bx)\D\By\D\Bx\bigg|
\le
Cr_mk^{-1}\NLtwo[B_l]{g}\NLtwo[B_l^c]{v}.
\]
Since \(r_m=2^mk^{-1}\), this proves the desired estimate. 
\end{proof}

Next we estimate the integral on $B_l^c\times B_l^c$. The proof is similar to that of Lemma~\ref{lem:chim}.

\begin{lemma}\label{lem:BlcBlc}
Suppose $g,v\in\Ltwo[B_l^c]$ and $\chi_m$ is the cutoff function in \eqref{eq:chim}. There exists a constant $C$ independent of $k$ and $m$ such that
\[
\bigg|\int_{B_l^c}\int_{B_l^c}\chi_m(\Bx,\By)
G(\tilde\Bx,\tilde\By)g(\By)v(\Bx)
\,\D\By\D\Bx\bigg| \le C2^m k^{-2} \|g\|_{L^2(B_l^c)}\|v\|_{L^2(B_l^c)},\quad
0<m<M.
\]
\end{lemma}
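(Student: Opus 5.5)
We follow the proof of Lemma~\ref{lem:chim}: split $B_l^c\times B_l^c$ into pieces on which \eqref{eq:Imrho-lower} gives a one-dimensional quadratic damping factor, and apply Lemma~\ref{lem:carleman} to the resulting kernel. On the support of $\chi_m(\Bx,\cdot)$ we have $r_{m-1}\le|\Bx-\By|\le r_{m+1}$. By Lemma~\ref{lem:decay-stretchedG}, $|G(\tilde\Bx,\tilde\By)|\le C(kr_m)^{-1/2}e^{-\frac12 k\Im\rho}$ there, since $kr_{m-1}\ge1$. Each of $\Bx$ and $\By$ lies in one of the four sets $S_j^\pm$ from \eqref{eq:Sj}, so it suffices to bound the sixteen pairs $(\Bx,\By)\in S_i^{\pm}\times S_j^{\pm}$. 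Because the sets overlap, each pair is counted with some multiplicity, but that multiplicity is bounded. Every summand in \eqref{eq:Imrho-lower} is nonnegative, so we may keep a single term.

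\textbf{Case $\Bx\in S_1^+$, $\By\in S_1^+$.} Write $s=x_1-l_1\ge0$ and $t=y_1-l_1\ge0$. Then $\int_{y_1}^{x_1}\sigma_1=\sigma_0(s-t)$, and \eqref{eq:Imrho-lower} gives $\Im\rho\ge\sigma_0(s-t)^2/r\ge\sigma_0(s-t)^2/(2r_m)$. Cauchy--Schwarz in $(x_2,y_2)$, exactly as in \eqref{eq:chim-1}, produces the factor $Cr_m\,\xi(s)\eta(t)$. Lemma~\ref{lem:carleman} applied to the kernel $e^{-\gamma(s-t)^2}$ with $\gamma\sim k/r_m$ then gives the bound
\[
C(kr_m)^{-1/2}\,r_m\,(r_m/k)^{1/2}=Cr_mk^{-1}=C2^mk^{-2}.
\]

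\textbf{Case $\Bx\in S_1^+$, $\By\in S_1^-$.} Write $s=x_1-l_1$ and $t=-l_1-y_1$. Then $x_1-y_1=s+t+2l_1$ and $\int_{y_1}^{x_1}\sigma_1=\sigma_0(s+t)$, so $\Im\rho\ge\sigma_0(s+t)^2/r$, which gives the kernel $e^{-\gamma(s+t)^2}$. (In fact this case needs $r\ge 2l_1$, and the crude bound also suffices.)

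\textbf{Case $\Bx\in S_1^+$, $\By\in S_2^\pm$ with $\By\notin S_1^\pm$.} Here the $x_1$-term is as in Lemma~\ref{lem:chim}, with $s=x_1-l_1$ and $|y_1|<l_1$. The obstacle is that we then want to integrate $\By$ over a strip in $x_2$ rather than $x_1$, so the Cauchy--Schwarz slicing must be done in mixed variables. To handle this, we keep both terms of \eqref{eq:Imrho-lower}:
\[
\Im\rho\ge\frac{\sigma_0}{r}\bigl[s(s+l_1-y_1)+t(t+|l_2-x_2|)\bigr]\ge\frac{\sigma_0}{r}\,(s^2+t^2),
\]
where $t=|y_2|-l_2$ and $x_2$ is on the appropriate side. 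If $x_2$ lies on the other side, the bound is still at least $\sigma_0 s^2/r$ plus a nonnegative term.

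\textbf{Handling the mixed case.} Use the kernel $e^{-\gamma(s^2+t^2)}$, which is covered by Lemma~\ref{lem:carleman}. Slice $v$ along $x_1$ (the variable $s$) and $g$ along $y_2$ (the variable $t$). The remaining pair $(x_2,y_1)$ ranges over a set of area $\le Cr_m^2$. Applying Cauchy--Schwarz in $(x_2,y_1)$ gives $Cr_m\|\tilde v(s,\cdot)\|\,\|\tilde g(\cdot,t)\|$, since for fixed $(s,t)$ each of $x_2$ and $y_1$ is confined to an interval of length $\le 2r_{m+1}$ by the support of $\chi_m$. Lemma~\ref{lem:carleman} again yields $Cr_mk^{-1}$.

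The remaining cases follow by symmetry. Summing the finitely many pieces proves the lemma. The main obstacle is this mixed corner configuration: one must choose the slicing variables so that the Cauchy--Schwarz step still costs only $r_m$, while a one-dimensional Carleman kernel remains available.
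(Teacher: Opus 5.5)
Your strategy matches the paper's. You cover $B_l^c\times B_l^c$ by the products $S_i^\pm\times S_j^\pm$; overlaps are harmless once absolute values are taken. You keep nonnegative terms of \eqref{eq:Imrho-lower}, pay $Cr_m$ for Cauchy--Schwarz in the transverse variables, and finish with Lemma~\ref{lem:carleman}. Your case $S_1^+\times S_1^+$ is exactly the paper's computation. Your case $S_1^+\times S_1^-$ with the kernel $e^{-\gamma(s+t)^2}$ is also correct. The paper itself writes out only $S_1^+\times S_1^+$ and dismisses all other pairs with ``similarly''.

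The mixed case, however, contains a step that fails. Take $\By\in S_2^+$ with $|y_1|\le l_1$, and $\Bx\in S_1^+$ with $x_2>l_2$. This is your ``other side'', i.e.\ $\Bx$ lies in the corner $S_1^+\cap S_2^+$.

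\begin{itemize}
\item There the second term of \eqref{eq:Imrho-lower} equals $\sigma_0(x_2-y_2)^2$, not something $\ge\sigma_0 t^2$. So $\Im\rho\ge\sigma_0(s^2+t^2)/r$ is false (take $x_2=y_2$ with $t$ large and $s$ small, where $\Im\rho=\sigma_0 s$). Hence the kernel $e^{-\gamma(s^2+t^2)}$ is not available.
\item Keeping only $\sigma_0 s^2/r$, as you suggest, does not rescue your mixed slicing. After Cauchy--Schwarz in $(x_2,y_1)$, the constraint $|x_2-y_2|\le 2r_m$ that ties $t$ to $\Bx$ has been integrated away. The remaining kernel $e^{-\gamma s^2}\cdot 1$ is rank one and unbounded on $L^2(0,\infty)$ in $t$. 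So this subcase is not covered by your argument as written.
\end{itemize}

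The repair stays within your own framework. These pairs also belong to $S_2^+\times S_2^+$, so bound them by your Case~1 rotated, with $s'=x_2-l_2$, $t'=y_2-l_2$ and kernel $e^{-\gamma(s'-t')^2}$. Equivalently, restrict the mixed case to $\Bx\in S_1^+$ with $x_2\le l_2$ when $\By\in S_2^+$, and symmetrically for $S_2^-$. In that region your lower bound $\sigma_0(s^2+t^2)/r$ does hold, and your area-$Cr_m^2$ Cauchy--Schwarz step is correct. With this refinement of the covering, the argument is complete and yields $Cr_mk^{-1}=C2^mk^{-2}$.
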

\begin{proof}
Let $S_1^\pm$, $S_2^\pm$ be the sub-domains defined in \eqref{eq:Sj}. Again we only need to consider the integral on \(S_1^+\times S_1^+\).
The proofs for other cases are similar.

Write $s = x_1-l_1$ and $t=y_1-l_1$. It is clear that
\[
\Im\rho(\tilde\Bx,\tilde\By) \ge
\frac{(x_1-y_1)}{|\Bx-\By|}
\int_{y_1}^{x_1}\sigma_1(\tau)\,\D\tau
= \frac{\sigma_0}{|\Bx-\By|} (s-t)^2.
\]
Extend $g|_{S_1^+}$ and $v|_{S_1^+}$ by zero to the exterior of $S_1^+$ and denote the extensions by $\tilde{g}(t,y_2)$ and $\tilde{v}(s,x_2)$, respectively.
By Lemma~\ref{lem:decay-stretchedG} and arguments similar to \eqref{eq:chim-1}, we know that
\begin{align*}
&\bigg|\int_{S_1^+}\int_{S_1^+}\chi_m(\Bx,\By) G(\tilde\Bx,\tilde\By)
g(\By)v(\Bx)\,\D\By\D\Bx\bigg| \\
\le\,& C(kr_m)^{-1/2} \int_0^{\infty}\int_0^{\infty}
\bigg(\int_{\bbR^2}\chi_m(l_1+s,x_2,l_1+t,y_2)
|\tilde{v}(s,x_2)\tilde{g}(t,y_2)|\D y_2\D x_2\bigg)
e^{-\frac{\sigma_0k}{4r_m}(s-t)^2}\D t \D s \\
\le\,& Ck^{-1/2}r_m^{1/2} \int_0^{\infty}\int_0^{\infty}
\NLtwo[\bbR]{\tilde{v}(s,\cdot)} \NLtwo[\bbR]{\tilde{g}(t,\cdot)}
e^{-\frac{\sigma_0k}{4r_m}(s-t)^2}\D t \D s.
\end{align*}
Then using Lemma~\ref{lem:carleman} and arguments similar to \eqref{eq:chim-2}, we obtain
\begin{align*}
\bigg|\int_{S_1^+}\int_{S_1^+}\chi_m(\Bx,\By) G(\tilde\Bx,\tilde\By)
g(\By)v(\Bx)\,\D\By\D\Bx\bigg|
\le  C r_m k^{-1}\NLtwo[S_1^+]{v} \NLtwo[S_1^+]{g} .
\end{align*}
Similarly, we have
\[
\bigg|\int_{S_i^\varepsilon}\int_{S_j^\delta}\chi_m(\Bx,\By)
G(\tilde\Bx,\tilde\By)g(\By)v(\Bx)\,\D\By\D\Bx\bigg|
\le Cr_mk^{-1}\NLtwo[S_i^{\varepsilon}]{v}\NLtwo[S_j^{\delta}]{g},\quad 
1\le i,j\le 2,\; \varepsilon,\delta\in \{+,-\}.
\]
Summing over all pairs of sub-domains and using the equality \(r_m=2^mk^{-1}\), we finish the proof.
\end{proof}

\begin{theorem}\label{thm:green-kernel}
There exists a constant $C$ independent of $k$ such that
\[
\bigg|\int_{\bbR^2}\int_{\bbR^2}
G(\tilde\Bx,\tilde\By)g(\By)v(\Bx)\,\D\By\D\Bx\bigg|
\le C k^{-1} \|g\|_{L^2(\bbR^2)}\|v\|_{L^2(\bbR^2)}
\quad \forall\,g,v\in\Ltwo[\bbR^2].
\]
\end{theorem}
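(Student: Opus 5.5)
The plan is to split the double integral according to whether $\Bx$ and $\By$ lie in $B_l$ or in $B_l^c$. Let $g_1:=g\chi_{B_l}$, $g_2:=g\chi_{B_l^c}$, and split $v=v_1+v_2$ in the same way. By bilinearity the integral becomes the sum of four pieces $I_{ij}$, $i,j\in\{1,2\}$, where $I_{ij}$ pairs $g_j$ with $v_i$.

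\emph{The piece $I_{11}$ (both points in $B_l$).} Here the stretching is trivial: $\tilde\Bx=\Bx$ and $\tilde\By=\By$, so $G(\tilde\Bx,\tilde\By)=G(\Bx,\By)$ is the free-space kernel. I would not localize this piece at all. Instead, set $w(\Bx):=\int_{B_l}G(\Bx,\By)g_1(\By)\,\D\By$. This is the radiating solution of $-\Delta w-k^2w=g_1$ in $\bbR^2$, i.e. problem \eqref{model} with $D=\emptyset$. The stability estimate \eqref{eq:stab-u} then gives $k\NLtwo[B_l]{w}\le \sTN{w}_{\Hone[B_l]}\le C\NLtwo[B_l]{g_1}$, and hence
\[
|I_{11}|=\big|(w,v_1)_{B_l}\big|\le Ck^{-1}\|g\|_{L^2(\bbR^2)}\|v\|_{L^2(\bbR^2)}.
\]
A naive dyadic bound on this piece would only give $O(k^{-1/2})$, since it lacks the PML damping. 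Using the PDE stability is therefore essential here, and this is the step I regard as the crux of the argument.

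\emph{The pieces $I_{12},I_{21},I_{22}$ (at least one point in $B_l^c$).} For these I insert the partition of unity \eqref{eq:chim}, writing $1=\sum_{m=0}^M\chi_m(\Bx,\By)$.
\begin{itemize}
\item The terms with $m=0$ and $m=M$ are handled by Lemma~\ref{lem:chi0M}. It applies to arbitrary $L^2(\bbR^2)$ functions, so it can be used with $g_j,v_i$. It yields bounds of order $k^{-2}$ and $k^{-3/2}e^{-\sigma_0kR/8}$, both of which are $\le Ck^{-1}$.
\item For $0<m<M$, the piece $I_{21}$ ($\Bx\in B_l^c$, $\By\in B_l$) is bounded by Lemma~\ref{lem:chim}, and $I_{22}$ by Lemma~\ref{lem:BlcBlc}. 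Each gives $C2^mk^{-2}\|g\|\|v\|$.
\item The piece $I_{12}$ ($\Bx\in B_l$, $\By\in B_l^c$) reduces to Lemma~\ref{lem:chim} by symmetry. The complex distance $\rho(\tilde\Bx,\tilde\By)$ is symmetric in $(\Bx,\By)$, so $G(\tilde\Bx,\tilde\By)=G(\tilde\By,\tilde\Bx)$. If the cutoffs $\chi_m$ are taken radial in $|\Bx-\By|$, they are symmetric too. If not, one simply reruns the proof of Lemma~\ref{lem:chim} with the roles of $s$ and $t$ exchanged, which only changes the kernel to $e^{-\gamma t(s+t)}$. This is still covered by Lemma~\ref{lem:carleman} through the adjoint.
\end{itemize}

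\emph{Summation.} The dyadic terms sum to
\[
\sum_{m=1}^{M-1}C2^mk^{-2}\le C2^Mk^{-2}\le C(4Rk)k^{-2}=Ck^{-1},
\]
using $2^M\le 4Rk$, which follows from the definition of $M$. Collecting $I_{11}$, the $m=0,M$ terms, and this dyadic sum gives the claimed bound $Ck^{-1}\|g\|_{L^2(\bbR^2)}\|v\|_{L^2(\bbR^2)}$.
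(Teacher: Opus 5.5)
Your proposal is correct and follows essentially the same route as the paper. The $B_l\times B_l$ block is handled by the free-space resolvent bound for the Newtonian potential; the paper cites Melenk--Sauter's Lemma~3.5 for this rather than \eqref{eq:stab-u} with $D=\emptyset$. The remaining blocks are handled by the dyadic partition with Lemmas~\ref{lem:chi0M}, \ref{lem:chim}, \ref{lem:BlcBlc} and the sum $\sum_m 2^m k^{-2}\le Ck^{-1}$. The only cosmetic difference is that the paper invokes $G(\tilde\Bx,\tilde\By)=G(\tilde\By,\tilde\Bx)$ after summing over $m$, so symmetry of $\chi_m$ never arises, whereas you apply it term by term.
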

\begin{proof}
It is known that the Newtonian potential $w(\Bx):= \int_{B_l} G(\Bx,\By)g(\By)\D\By$ satisfies
\[
\Delta w+k^2w = -g\chi_{B_l} \;\;\; \hbox{in}\;\bbR^2,
\quad \lim_{r\to \infty}\sqrt{r}(\partial_r w-\Vi kw)= 0,
\]
where $\chi_{B_l}$ is the indicator function of $B_l$. By \cite[Lemma~3.5 ]{MelenkSauter2010}, there exists a constant $C>0$ independent of $k$  such that
$\sTN{w}_{H^1(B_l)}
\le C\|g\|_{L^2(B_l)}$. This shows
\begin{align}\label{eq:BlBl}
\bigg|\int_{B_l}\int_{B_l}
G(\tilde\Bx,\tilde\By)g(\By)v(\Bx)\,\D\By\D\Bx\bigg|
\le C k^{-1} \|g\|_{L^2(B_l)}\|v\|_{L^2(B_l)}.
\end{align}

For $M=\lceil 1+\log_2(Rk)\rceil$ and $0<m<M$, we deduce from Lemmas~\ref{lem:chim} and \ref{lem:BlcBlc} that
\[
\bigg|\int_{B_l^c}\int_{\bbR^2}\chi_m(\Bx,\By)
G(\tilde\Bx,\tilde\By)g(\By)v(\Bx)\,\D\By\D\Bx\bigg|
\le C 2^m k^{-2} \|g\|_{L^2(\bbR^2)}\|v\|_{L^2(B_l^c)}.
\]  
Note that $e^{-\frac18\sigma_0kR}\le Ck^{-\frac{1}{2}}$ for a constant $C$ independent of $k\ge 1$. Using Lemma~\ref{lem:chi0M}, we get
\[
\begin{aligned}
\bigg|\int_{B_l^c}\int_{\bbR^2}
G(\tilde\Bx,\tilde\By)g(\By)v(\Bx)\,\D\By\D\Bx\bigg|
&\le Ck^{-2} \sum_{m=0}^{M-1}2^m
\|g\|_{L^2(\bbR^2)}\|v\|_{L^2(B_l^c)} 
\le Ck^{-1}\|g\|_{L^2(\bbR^2)}\|v\|_{L^2(B_l^c)}.
\end{aligned}
\]
By the symmetry
\(G(\tilde\Bx,\tilde\By)=G(\tilde\By,\tilde\Bx)\)
and by interchanging \(g\) and \(v\), the same estimate holds
for the integral over \(B_l\times B_l^c\).
Together with \eqref{eq:BlBl}, this proves the theorem.
\end{proof}


\section{Wavenumber-explicit stability for the truncated source problem}
\label{sec:source}

The purpose of this section is to establish wavenumber-explicit stability of UPML for the pure source problem where $D=\emptyset$ and $\supp(f)\subset B_l$. The weak formulation \eqref{eq:weak-hu} is to find $\hat{u}\in H_0^1(B_L)$ such that 
\begin{align}\label{pml-source}
\mathscr{A}_{B_L}(\hat{u},v) = (f,v)_{B_l}
\quad \forall\,v\in\zbHone[B_L].    
\end{align}

\subsection{Inf-sup condition for $\mathscr{A}_{\bbR^2}$}

Now we prove the inf-sup condition for the whole-space bilinear form $\mathscr{A}_{\bbR^2}$. Given a linear functional $F\in \Hone[\bbR^2]'$, we consider the weak problem: find $w\in\Hone[\bbR^2]$ such that
\begin{equation}\label{eq:upml-whole}
\mathscr{A}_{\bbR^2}(w,\psi) =\langle F, \psi\rangle
\quad \forall\,\psi\in H^{1}(\bbR^2).
\end{equation}

\begin{lemma}\label{lem:upml-whole}
Problem \eqref{eq:upml-whole} has a unique solution $w\in H^{1}(\bbR^2)$. Moreover, there exists a constant $C>0$ independent of $k$ such that
\begin{equation}\label{eq:upml-R2-stab}
\sTN{w}_{H^{1}(\bbR^2)}\le C k\sTN{F}_{H^{1}(\bbR^2)'}.
\end{equation}
\end{lemma}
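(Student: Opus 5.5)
Uniqueness and existence will come from an explicit representation of the solution through the stretched fundamental solution, with duality giving the stability bound. Set $\widetilde G(\Bx,\By)=J(\By)G(\tilde\Bx,\tilde\By)$ from \eqref{eq:pml-G-def}, which solves $\Cl\widetilde G(\cdot,\By)=\delta_{\By}$ by \eqref{eq:pml-fundamental}. Since $\mathscr{A}_{\bbR^2}(w,\psi)=(J\Cl w,\psi)$, the natural candidate for a functional of the form $\langle F,\psi\rangle=(g,\psi)$ with $g\in\Ltwo[\bbR^2]$ is
\[
w(\Bx)=\int_{\bbR^2}G(\tilde\Bx,\tilde\By)g(\By)\,\D\By=\int_{\bbR^2}\widetilde G(\Bx,\By)J(\By)^{-1}g(\By)\,\D\By .
\]

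\textbf{Step 1 (the $L^2$ data case).} For $F=(g,\cdot)$ with $g\in L^2$, Theorem~\ref{thm:green-kernel} gives $\NLtwo[\bbR^2]{w}\le Ck^{-1}\NLtwo[\bbR^2]{g}$, the usual $k^{-1}$ $L^2\to L^2$ resolvent bound. To control $\nabla w$, I will test the equation with $\bar w$. Because $\Re\bbA$ is uniformly positive definite (its diagonal entries are $\alpha_2/\alpha_1$-type quotients whose real parts are bounded below by a constant depending on $\sigma_0$), a G\aa rding-type argument gives
\[
\|\nabla w\|^2\le C\big(k^2\|w\|^2+|(g,\bar w)|\big)\le C\|g\|^2 .
\]
Hence $\sTN{w}_{H^1}\le C\NLtwo[\bbR^2]{g}$. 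That $w\in H^1$ actually solves \eqref{eq:upml-whole} follows from the distributional identity together with the exponential decay in \eqref{eq:dG-exp}.

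\textbf{Step 2 (uniqueness and the adjoint problem).} Uniqueness comes from duality. If $\mathscr{A}_{\bbR^2}(w,\psi)=0$ for all $\psi$, I take $\psi$ to be the Step~1 solution of the transposed problem with data $\bar w$. This is available because $\mathscr{A}_{\bbR^2}$ is symmetric in its two arguments (there is no conjugation in \eqref{a-R2}), so the adjoint problem is the same problem. It then follows that $\|w\|^2=0$. The same symmetry gives a bounded solution operator $S:L^2\to H^1$ with $\sTN{Sg}_{H^1}\le C\|g\|$.

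\textbf{Step 3 (general $F\in H^1(\bbR^2)'$).} This is the main obstacle, since $H^{-1}$-type data must be handled with only one power of $k$ lost. My plan is to decompose $F$ by the Riesz map with respect to the weighted inner product. Let $z\in H^1$ satisfy
\[
(\nabla z,\nabla\psi)+k^2(z,\psi)=\langle F,\psi\rangle ,
\]
so that $\sTN{z}_{H^1}=\sTN{F}$. Then set $w=z_1+w_2$, where $z_1$ solves $\mathscr{A}(z_1,\psi)=(\nabla z,\nabla\psi)+k^2(z,\psi)$ up to a correction. Concretely, I use the coercive part: the form $a^+(w,\psi)=(\bbA\nabla w,\nabla\psi)+k^2(Jw,\psi)$ is coercive on the weighted $H^1$ norm with a $k$-independent constant, at least after multiplication by a suitable complex phase, using $\Re\bbA>0$ and the sectorial behaviour of $J$. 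I will check this coercivity first.

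Let $w_1$ solve $a^+(w_1,\psi)=\langle F,\psi\rangle$; then $\sTN{w_1}\le C\sTN{F}$. The remainder $w_2=w-w_1$ satisfies
\[
\mathscr{A}(w_2,\psi)=2k^2(Jw_1,\psi),
\]
which is $L^2$ data of size $Ck^2\|w_1\|\le Ck\sTN{F}$. Step~1 then gives $\sTN{w_2}\le Ck\sTN{F}$. Combining the two pieces yields \eqref{eq:upml-R2-stab}.

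If uniform coercivity of $a^+$ fails, the fallback is a sesquilinear variant with conjugated test functions. Because $J=\alpha_1\alpha_2$ can have argument up to $2\arctan\sigma_0$, which may exceed $\pi/2$, I would replace $J$ by $|J|$ or by $1$ in $a^+$. The discrepancy is again $O(k^2)\|w_1\|$ in $L^2$, so it is handled by Step~1 in exactly the same way.
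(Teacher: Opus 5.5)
Your proposal is correct. Once you take your fallback (replacing $J$ by $1$ in the coercive form), the stability argument is the paper's proof. The paper solves $\int_{\bbR^2}(\bbA\nabla w_0\cdot\nabla\psi+k^2w_0\psi)=\langle F,\psi\rangle$ by Lax--Milgram and sets $w=w_0+k^2\Phi$ with $\Phi(\Bx)=\int_{\bbR^2}G(\tilde\Bx,\tilde\By)(1+J(\By))w_0(\By)\,\D\By$. This is your Step~1 applied to the data $k^2(1+J)w_0$: it is bounded in $L^2$ by Theorem~\ref{thm:green-kernel}, and its gradient is controlled by the same G\aa rding step you describe.

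You should, however, go straight to the fallback rather than first trying to keep $J$ in $a^+$. With $J$ kept, the coefficient values have arguments $-\arctan\sigma_0$, $0$, $\arctan\sigma_0$ and $2\arctan\sigma_0$, the last coming from $J=(1+\Vi\sigma_0)^2$ in the corner layers. A single phase rotating all of them into the open right half-plane therefore exists only if $\sigma_0<\sqrt3$, whereas the lemma is stated for arbitrary fixed $\sigma_0>0$ (the experiments use $\sigma_0=5$). With $1$ (or $|J|$) in place of $J$, testing with $\bar w$ gives coercivity directly from $\Re(\alpha_2/\alpha_1),\Re(\alpha_1/\alpha_2)\ge(1+\sigma_0^2)^{-1}$.

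The genuine difference from the paper is well-posedness. The paper cites Kim--Pasciak for existence and uniqueness. You instead obtain uniqueness by duality from the unconjugated symmetry of $\mathscr{A}_{\bbR^2}$ and the Step~1 solution operator, and existence from your explicit construction, which makes the lemma self-contained. Both versions rely on a point the paper leaves implicit: that the potential $\int_{\bbR^2}G(\tilde\Bx,\tilde\By)g(\By)\,\D\By$ is an $H^1(\bbR^2)$ weak solution for every $g\in L^2(\bbR^2)$, not only compactly supported ones. This follows by density. For compactly supported $g$, \eqref{eq:dG-exp} gives exponential decay, so the potential lies in $H^1$ and satisfies the weak form. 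The a priori bound of your Step~1 then passes to the limit, and Theorem~\ref{thm:green-kernel} identifies the limit with the potential of $g$.
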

\begin{proof}
It is known that \eqref{eq:upml-whole} has a unique solution under
the condition \eqref{eq:sigma-assump}  (cf. e.g.\ \cite{KimPasciak2010}). It is left to show the $k$-explicit stability.

Note that $\Re\big(\alpha_{1}(x_{1})/\alpha_{2}(x_{2})\big)\ge (1+\sigma_0^2)^{-1}$ and $\Re\big(\alpha_{2}(x_{2})/\alpha_{1}(x_{1})\big)\ge (1+\sigma_0^2)^{-1}$. By the Lax-Milgram lemma, there exists a unique solution $w_0\in H^{1}(\bbR^2)$ to the problem
\begin{equation}\label{eq:chen-3.6}
\int_{\bbR^2}(\bbA\nabla w_0\cdot\nabla\psi
+k^{2} w_0\psi) =\langle F, \psi\rangle
\quad \forall\,\psi\in H^{1}(\bbR^2),
\end{equation}
and it admits 
\begin{align}\label{est-w0}
\sTN{w_0}_{H^{1}(\bbR^2)}\le C\sTN{F}_{H^{1}(\bbR^2)'}.  
\end{align}
Define the Newtonian potential
\begin{equation}\label{eq:chen-3.7}
\Phi(\Bx):= \int_{\bbR^2} \tilde{G}(\Bx,\By)w_0(\By)(1+J^{-1}(\By))\,{\D} \By = \int_{\bbR^2} G(\tilde\Bx,\tilde\By)w_0(\By)(1+J(\By))\,{\D} \By.
\end{equation}
For any $v\in L^{2}(\bbR^2)$,  Theorem~\ref{thm:green-kernel} shows that
\begin{align} \label{eq:Phi-v}
\bigg|\int_{\bbR^2}\Phi(\Bx) v(\Bx)\D\Bx\bigg| 
= \bigg|\int_{\bbR^2} 
\int_{\bbR^2}G(\tilde\Bx,\tilde\By)w_0(\By)(1+J(\By))v(\Bx){\D}\By{\D}\Bx\bigg| 
\le Ck^{-1}\NLtwo[\bbR^2]{v}\NLtwo[\bbR^2]{w_0}.
\end{align}
This implies $\|\Phi\|_{L^2(\bbR^2)}\le C k^{-1}\|w_0\|_{L^2(\bbR^2)}
\le C k^{-2}\sTN{F}_{H^1(\bbR^2)'}$. 
Then by equation \eqref{eq:pml-fundamental}, $\Phi$ satisfies
\begin{equation}\label{eq:Phi}
\mathscr{A}_{\bbR^2}(\Phi,\psi) =\int_{\bbR^2} (1+J) w_0\psi 
\quad \forall\,\psi\in H^{1}(\bbR^2).
\end{equation}
Clearly $w_0+k^2\Phi$ solves \eqref{eq:upml-whole}.
The uniqueness of solution shows $w=w_0+k^2\Phi$. 

Note that the entries of $\bbA$ admit $\Re(\alpha_2/\alpha_1) \ge (1+\sigma_0^2)^{-1}$ 
and $\Re(\alpha_1/\alpha_2) \ge (1+\sigma_0^2)^{-1}$. Taking $\psi=\ol\Phi$ in \eqref{eq:Phi} shows that
\begin{align} \label{eq:dPhi-est}
\SNHone[\bbR^2]{\Phi}^2  
\le C\Re \int_{\bbR^2}\bbA\nabla\Phi\cdot\nabla\ol{\Phi} 
\le C\big(k^2\NLtwo[\bbR^2]{\Phi}^2 +\NLtwo[\bbR^2]{w_0} \NLtwo[\bbR^2]{\Phi}\big)
\le C\NLtwo[\bbR^2]{w_0}^2.
\end{align}
Then \eqref{eq:upml-R2-stab} follows from \eqref{est-w0} and the identity $w=w_0+k^2\Phi$.
\end{proof}

\begin{theorem}\label{thm:infsup-R2}
There exists a constant $\mu_0>0$ independent of $k$ such that
\begin{equation}\label{eq:infsup-R2}
\sup_{0\neq \psi\in H^1(\bbR^2)}
\frac{|\mathscr{A}_{\bbR^2}(\phi,\psi)|}{\sTN{\psi}_{H^1(\bbR^2)}}
\ge \frac{\mu_0}{k}\sTN{\phi}_{H^1(\bbR^2)}
\quad \forall\,\phi\in H^1(\bbR^2).
\end{equation}
\end{theorem}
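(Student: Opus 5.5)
The inf-sup estimate \eqref{eq:infsup-R2} follows from Lemma~\ref{lem:upml-whole} by a duality argument. Fix $\phi\in H^1(\bbR^2)$. The natural approach is to test $\mathscr{A}_{\bbR^2}(\phi,\cdot)$ against a suitably chosen function built from the adjoint problem. Since $\mathscr{A}_{\bbR^2}(w,\psi)=\int(\bbA\nabla w\cdot\nabla\psi-k^2Jw\psi)$ has a diagonal $\bbA$ and scalar $J$, it is symmetric in its two arguments: $\mathscr{A}_{\bbR^2}(w,\psi)=\mathscr{A}_{\bbR^2}(\psi,w)$. Hence the adjoint problem is the same problem, and Lemma~\ref{lem:upml-whole} applies to it directly.

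\textbf{Construction of the dual function.} Let $\Cv:H^1(\bbR^2)\to H^1(\bbR^2)'$ be the Riesz map for the weighted inner product $(u,v)_*=\int(\nabla u\cdot\nabla \ol v+k^2u\ol v)$. Define $F\in H^1(\bbR^2)'$ by
\[
\langle F,\psi\rangle:=\int_{\bbR^2}\big(\nabla\psi\cdot\nabla\ol\phi+k^2\psi\ol\phi\big),
\]
so that $\langle F,\phi\rangle=\sTN{\phi}_{H^1(\bbR^2)}^2$ and $\sTN{F}_{H^1(\bbR^2)'}=\sTN{\phi}_{H^1(\bbR^2)}$. By Lemma~\ref{lem:upml-whole} there is a unique $z\in H^1(\bbR^2)$ with
\[
\mathscr{A}_{\bbR^2}(z,\psi)=\langle F,\psi\rangle \quad \forall\,\psi\in H^1(\bbR^2),
\]
and it satisfies $\sTN{z}_{H^1(\bbR^2)}\le Ck\,\sTN{\phi}_{H^1(\bbR^2)}$.

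\textbf{Conclusion.} Take $\psi=\phi$ in this equation and use the symmetry of the form:
\[
\mathscr{A}_{\bbR^2}(\phi,z)=\mathscr{A}_{\bbR^2}(z,\phi)=\langle F,\phi\rangle=\sTN{\phi}_{H^1(\bbR^2)}^2 .
\]
Therefore
\[
\sup_{\psi\ne0}\frac{|\mathscr{A}_{\bbR^2}(\phi,\psi)|}{\sTN{\psi}_{H^1(\bbR^2)}}\ge\frac{|\mathscr{A}_{\bbR^2}(\phi,z)|}{\sTN{z}_{H^1(\bbR^2)}}\ge\frac{\sTN{\phi}_{H^1(\bbR^2)}^2}{Ck\,\sTN{\phi}_{H^1(\bbR^2)}},
\]
which is \eqref{eq:infsup-R2} with $\mu_0=1/C$. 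The case $\phi=0$ is trivial, and $z\neq0$ whenever $\phi\neq0$.

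\textbf{Main obstacle.} All the analytic work sits in Lemma~\ref{lem:upml-whole}, which rests on the Green kernel bound of Theorem~\ref{thm:green-kernel}. The only point needing care here is that the bilinear form is complex-symmetric but not Hermitian. The test function must therefore be chosen consistently: $F$ has to be defined with $\ol\phi$, so that pairing it with $\phi$ produces a positive quantity, and the dual solution $z$ is then used unconjugated as the test function. No conjugation of $\bbA$ or $J$ enters, so the $k$-explicit constant from Lemma~\ref{lem:upml-whole} transfers unchanged.
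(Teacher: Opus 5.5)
Your argument is correct, but it takes a different route from the paper.

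\textbf{The paper's route.} Given $\phi$, the paper sets $\langle F,\psi\rangle:=\mathscr{A}_{\bbR^2}(\phi,\psi)$. It observes that $\phi$ is then the unique solution of \eqref{eq:upml-whole} with this right-hand side. It reads off $\sTN{\phi}_{H^1(\bbR^2)}\le Ck\sTN{F}_{H^1(\bbR^2)'}$, and the right-hand side is literally $Ck$ times the inf-sup quotient. No test function is constructed and no symmetry of the form is used. The uniqueness part of Lemma~\ref{lem:upml-whole} is essential, however: the $k$-explicit bound there is proved for the constructed function $w_0+k^2\Phi$, and it transfers to $\phi$ only through uniqueness.

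\textbf{Your route.} You feed the Riesz representative of $\phi$ (built with $\ol\phi$) into the solver and use the complex symmetry $\mathscr{A}_{\bbR^2}(\phi,z)=\mathscr{A}_{\bbR^2}(z,\phi)$. You need only the existence of some solution $z$ with $\sTN{z}_{H^1(\bbR^2)}\le Ck\sTN{F}_{H^1(\bbR^2)'}$. The proof of Lemma~\ref{lem:upml-whole} supplies exactly that constructively, so with your argument uniqueness becomes a consequence of the inf-sup condition rather than an input.

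\textbf{Trade-off.} Your argument relies on symmetry. For a non-symmetric form it would only give the inf-sup condition with the roles of the two arguments exchanged. Your mechanism is the one the paper itself switches to in Lemma~\ref{lem:infsup-Dc}. There, for $\mathscr{A}_{D^c}$, only an existence-with-bound construction $\phi_1=\psi-w$ is available, so the direct argument cannot be used. Your version is therefore the more robust template, while the paper's is shorter in the whole-space case.
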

\begin{proof}
It is easy to see that the bilinear form $\mathscr{A}_{\bbR^2}$ is continuous on $\Hone[\bbR^2]$. For \(\phi\in H^1(\bbR^2)\), we define a linear functional \(F\in H^{1}(\bbR^2)'\) as follows
\[
\langle F,\psi\rangle:=
\mathscr{A}_{\bbR^2}(\phi,\psi)	\quad \forall\,\psi\in H^1(\bbR^2).
\]
By Lemma~\ref{lem:upml-whole}, \(\phi\) is the unique solution to the weak problem
\[
\mathscr{A}_{\bbR^2}(\phi,\psi)=\langle F,\psi\rangle	\quad \forall\,\psi\in H^1(\bbR^2).
\]
The stability estimate in Lemma~\ref{lem:upml-whole} shows that
\begin{align*}
\sTN{\phi}_{H^1(\bbR^2)}\le C k\sTN{F}_{H^{1}(\bbR^2)'}
=C k\sup_{0\neq \psi\in H^1(\bbR^2)}
\frac{|\mathscr{A}_{\bbR^2}(\phi,\psi)|}{\sTN{\psi}_{H^1(\bbR^2)}}.
\end{align*}
The proof is finished by setting $\mu_0 = 1/C$.
\end{proof}
	
\subsection{Inf-sup condition for $\mathscr{A}_{B_L}$}

Next we derive the inf-sup condition for $\mathscr{A}_{B_L}$. We adopt the reflection argument in the proof of \cite[Theorem~5.5]{BramblePasciak2013} and specify the dependence of all constants on $k$.

\begin{theorem}	\label{thm:infsup-BL}
Assume $d:=\min\{d_1, d_2\}>1.2R$ and that $\sigma_0d$ is large enough. There exists a constant \(\mu_1>0\) independent of $d_1$, $d_2$, and $k$ such that
\begin{equation}\label{eq:infsup-BL}
\sup_{0\neq v\in H_0^1(B_L)}
\frac{|\mathscr{A}_{B_L}(\phi,v)|}{\sTN{v}_{H^1(B_L)}}
\ge \frac{\mu_1}{k} \sTN{\phi}_{H^1(B_L)}	\quad \forall\,\phi\in H_0^1(B_L).
\end{equation}
\end{theorem}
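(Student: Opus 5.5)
The plan is to deduce \eqref{eq:infsup-BL} from the whole-space inf-sup condition of Theorem~\ref{thm:infsup-R2}, following the reflection argument of \cite[Theorem~5.5]{BramblePasciak2013}. Fix $\phi\in H_0^1(B_L)$. We extend it to a function $E\phi\in H^1(\bbR^2)$ by successive odd reflections across the four sides $x_j=\pm L_j$, using $x_j\mapsto 2L_j-x_j$ and $x_j\mapsto -2L_j-x_j$.

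Two properties of this extension are what we need.
\begin{itemize}
\item Because $\phi$ vanishes on $\Gamma_L$, the odd reflection is $H^1$-conforming across $\Gamma_L$.
\item Because $\sigma_j\equiv\sigma_0$ on $|x_j|>l_j$, each reflection maps a slab of the layer of width less than $d_j$ into another part of the layer where $\bbA$ and $J$ take the same constant values.
\end{itemize}
Hence $E\phi$ satisfies the transmission structure of $\mathscr{A}_{\bbR^2}$ locally. Since odd reflection of functions vanishing on $\Gamma_L$ does not preserve $H^1$ away from a strip, we multiply $E\phi$ by a cutoff $\theta$. This cutoff equals $1$ on a neighbourhood of $\overline{B_L}$ of width about $d-R$, is supported in the region reached by a single reflection, and has $|\nabla\theta|\le C/d$. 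We set $\Phi:=\theta E\phi$. The hypothesis $d>1.2R$ leaves room for the transition zone of $\theta$ to lie at distance larger than $R$ from $B_l$.

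Next, for any $\psi\in H^1(\bbR^2)$ we fold it back to $B_L$. We define $v:=\sum_{\text{reflections }T}(\pm)\,(\theta\psi)\circ T$ restricted to $B_L$, with the signs chosen so that $v\in H_0^1(B_L)$, and with the Jacobian bookkeeping handled by the constancy of the coefficients in the layer. A change of variables gives
\[
\mathscr{A}_{\bbR^2}(\Phi,\psi)=\mathscr{A}_{B_L}(\phi,v)+\mathcal{R}(\phi,\psi),
\qquad \sTN{v}_{H^1(B_L)}\le C\sTN{\psi}_{H^1(\bbR^2)} .
\]
The remainder $\mathcal{R}$ collects only the terms containing $\nabla\theta$. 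It is therefore supported in the far part of the layer, at distance $\gtrsim d-R$ from $B_l$. Applying Theorem~\ref{thm:infsup-R2} to $\Phi$, and using $\sTN{\Phi}_{H^1(\bbR^2)}\ge\sTN{\phi}_{H^1(B_L)}$, we obtain
\[
\frac{\mu_0}{k}\sTN{\phi}_{H^1(B_L)}\le C\sup_{v}\frac{|\mathscr{A}_{B_L}(\phi,v)|}{\sTN{v}_{H^1(B_L)}}+\sup_\psi\frac{|\mathcal{R}(\phi,\psi)|}{\sTN{\psi}_{H^1(\bbR^2)}} .
\]

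The main obstacle is showing that the remainder term is at most $\frac{\mu_0}{2k}\sTN{\phi}_{H^1(B_L)}$ plus a multiple of the first supremum. A naive bound gives $|\mathcal{R}|\le Cd^{-1}\sTN{\phi}\,\sTN{\psi}$, which is not $O(k^{-1})$. I would therefore not bound $\phi$ directly in the cutoff region. Instead I would argue by duality: set $F:=\mathscr{A}_{B_L}(\phi,\cdot)$ and compare $\phi$ with the whole-space solution $w$ of \eqref{eq:upml-whole} whose data is $F$ pulled back through the folding map. The key input is that the whole-space operator is represented through the kernel $G(\tilde\Bx,\tilde\By)$. By Lemma~\ref{lem:Imrho-large-sep} and \eqref{eq:dG-exp}--\eqref{eq:dxdyG-exp}, the transmitted part of $w$ in the region where $\nabla\theta\ne0$ is bounded by $Ce^{-c\sigma_0 k(d-R)}$ times the norm of $F$; the tail integrals \eqref{eq:G-tail-L1}--\eqref{eq:G-tail-L2} make this precise. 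The requirement that $\sigma_0 d$ be large is exactly what makes $e^{-c\sigma_0 k(d-R)}\cdot k\le \frac{\mu_0}{2}$ uniformly in $k\ge1$, so that the term can be absorbed. The absorption yields \eqref{eq:infsup-BL} with $\mu_1=\mu_0/(2C)$, independent of $k$, $d_1$ and $d_2$.

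I expect the delicate points to be two. The first is the corner regions, where reflections in $x_1$ and $x_2$ compose. There I would check that the composed map still sends layer to layer with constant coefficients, including the non-symmetric factor $J$. The second is verifying that no boundary term at $\Gamma_L$ survives in the change of variables.
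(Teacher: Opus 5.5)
\textbf{Gap in the closing step.} Your reflection--cutoff--folding setup is sound. The step you use to close the argument, however, is misplaced and, as stated, does not hold.

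The data of $w$ is $\psi\mapsto\langle F,v_\psi\rangle=\mathscr{A}_{\bbR^2}(\Phi,\psi)-\mathcal{R}(\phi,\psi)$. By uniqueness, $w=\Phi-w_{\mathcal R}$, where $w_{\mathcal R}$ is the whole-space response to $\mathcal R(\phi,\cdot)$, a source living exactly where $\nabla\theta\ne0$. So nothing makes $w$ exponentially small in the transition zone of $\theta$ relative to $\sTN{F}_{H^{-1}(B_L)}$. The folded data carries a $\theta$-weighted mirror image of $F$ restricted to the layer into that zone. For instance, take $\phi$ a bump in the layer near $x_1=l_1$; its reflection sits near $x_1=L_1+d_1$.

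Even if $w$ were small there, $\mathcal R$ involves $E\phi$, not $w$. You would still need $\phi\approx w$ in that zone, which is what is to be proved.

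The quantity that is exponentially small is $\Phi-w=w_{\mathcal R}$ \emph{restricted to $B_L$}. With $\nabla\theta$ supported at distance $>R$ from $B_L$, the representation through $G(\tilde\Bx,\tilde\By)$ and \eqref{eq:dG-exp}--\eqref{eq:dxdyG-exp} bound it by an exponentially small factor times $\sTN{\phi}_{H^1(B_L)}$, not times $\sTN{F}$. That term is then absorbed.

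This is the paper's mechanism:
\begin{enumerate}
\item it reflects $\phi$ to $\tilde\phi$ on $\wt B_L$ without a cutoff;
\item it extends $\tilde F$ by Hahn--Banach and solves for $w$ on $\bbR^2$;
\item it observes that $z=w-\tilde\phi$ satisfies $\Cl z=0$ in $\wt B_L$;
\item it cuts $z$ off near $\partial\wt B_L$ and obtains $\sTN{z}^2_{H^1(B_L)}\le Cde^{-\sigma_0kd/5}\big(\sTN{w}^2_{H^1(\wt B_L)}+\sTN{\phi}^2_{H^1(B_L)}\big)$, which is absorbed because $\sigma_0d$ is large.
\end{enumerate}

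\textbf{Your first route already works.} You did not need the detour, because your naive estimate of $\mathcal R$ is off by a factor $k^{-1}$. Each term of $\mathcal R$ pairs an undifferentiated factor with a differentiated one, so in the $k$-weighted norm
\[
|\mathcal R(\phi,\psi)|\le C\|\nabla\theta\|_{L^\infty}\big(\|E\phi\|_{L^2}\|\nabla\psi\|_{L^2}+\|\nabla E\phi\|_{L^2}\|\psi\|_{L^2}\big)\le\frac{C}{dk}\,\sTN{\phi}_{H^1(B_L)}\sTN{\psi}_{H^1(\bbR^2)},
\]
which is $O(k^{-1})$. Your inequality then reads
\[
\frac{\mu_0}{k}\sTN{\phi}_{H^1(B_L)}\le C\sup_{v}\frac{|\mathscr{A}_{B_L}(\phi,v)|}{\sTN{v}_{H^1(B_L)}}+\frac{C}{dk}\sTN{\phi}_{H^1(B_L)},
\]
and it closes once $d\ge 2C/\mu_0$.

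For this, let $\theta$ decay across the whole reflected strip rather than only its outer part: $\theta=1$ on $B_L$ and $\theta=0$ for $|x_j|\ge L_j+d_j$. Then $|\nabla\theta|\le 2/d$ and $\theta\psi\in H^1_0(\wt B_L)$, so the folded pieces vanish on $x_j=\pm l_j$ and can be extended by zero into $B_l$.

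This direct route needs only Theorem~\ref{thm:infsup-R2}: no Hahn--Banach and no Green-kernel decay. The price is that its remainder is only algebraically small, $O((dk)^{-1})$ against $\mu_0/k$. It therefore requires $d$ to exceed a fixed multiple of $1/\mu_0$, with constants depending on $\sigma_0,l_1,l_2$. The paper's remainder is instead exponentially small in $\sigma_0kd$.
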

\begin{proof}
Given $\phi\in H_0^1(B_L)$, we define a linear functional $F\in H^{-1}(B_L)$ by
\[
\langle F,\psi\rangle := \mathscr{A}_{B_L}(\phi,\psi)	
\quad\forall\,\psi\in H_0^1(B_L).
\]
It suffices to show $\sTN{\phi}_{H^1(B_L)}\le C k\sTN{F}_{H^{-1}(B_L)}$ for a constant $C$ independent of $k$.
		
Next we extend $\phi$ from $B_L$ to the rectangle $\wt B_L:=(-L_1-d_1,\,L_1+d_1)\times(-L_2-d_2,\,L_2+d_2)$ by odd reflections. First we define $\phi_1$ on
$(-L_1-d_1,\,L_1+d_1)\times(-L_2,L_2)$ by
\begin{align*}
\phi_1(\Bx)=
\begin{cases}
-\phi(2L_1-x_1,x_2) & \hbox{if}\;\; \Bx\in (L_1,\,L_1+d_1)\times(-L_2,L_2),\\
\phi(x_1,x_2) & \hbox{if}\;\; \Bx\in [-L_1,L_1]\times(-L_2,L_2),\\
-\phi(-2L_1-x_1,x_2) & \hbox{if}\;\; \Bx\in (-L_1-d_1,\,-L_1)\times(-L_2,L_2).
\end{cases}
\end{align*}
Then we define $\tilde\phi\in H^1(\wt B_L)$ by odd reflection of $\phi_1$ across $x_2=\pm L_2$:
\begin{align*}
\tilde\phi(\Bx)=
\begin{cases}
-\phi_1(x_1,2L_2-x_2) & \hbox{if}\;\;\Bx\in (-L_1-d_1,\,L_1+d_1)\times (L_2,\,L_2+d_2),\\
\phi_1(x_1,x_2)	&  \hbox{if}\;\; \Bx\in (-L_1-d_1,\,L_1+d_1)\times [-L_2,L_2],\\
-\phi_1(x_1,-2L_2-x_2) & \hbox{if}\;\;  \Bx\in (-L_1-d_1,\,L_1+d_1)\times (-L_2-d_2,\,-L_2).
\end{cases}
\end{align*}
It is clear that $\tilde\phi\in\Hone[\wt B_L]$. Define a linear functional $\tilde F\in H^{-1}(\wt B_L)$ by
\begin{align}\label{problem-tF}
\langle \tilde F,\psi\rangle :=	\mathscr{A}_{\wt B_L}(\tilde\phi,\psi) 
\quad	\forall\,\psi\in H_0^1(\wt B_L). 
\end{align}

To estimate the norm of $\tilde F$, we extend $\psi|_{\wt B_L\backslash B_L}$ to $B_L$ by reflections. First we define $\psi_1$ by even reflections across $x_2=\pm L_2$
\begin{align*}
\psi_1(\Bx)=
\begin{cases}
\psi(x_1,2L_2-x_2) & \hbox{if}\;\;\Bx\in (-L_1-d_1,\,L_1+d_1)\times(l_2,\,L_2),\\
0	&  \hbox{if}\;\; \Bx\in (-L_1-d_1,\,L_1+d_1)\times [-l_2,l_2],\\
\psi(x_1,-2L_2-x_2) & \hbox{if}\;\;  \Bx\in(-L_1-d_1,\,L_1+d_1)\times(-L_2,\,-l_2).
\end{cases}
\end{align*}
Clearly $\psi_1$ is continuous across the interfaces.
Next we define $\hat\psi$ by reflections of $\psi_1$ across $x_1=\pm L_1$
\begin{align*}
\hat\psi(\Bx)=
\begin{cases}
\psi(2L_1-x_1,x_2) -\psi_1(2L_1-x_1,x_2) 
    & \hbox{if}\;\; \Bx\in (l_1,\,L_1)\times(-L_2,L_2),\\
-\psi_1(x_1,x_2) & \hbox{if}\;\; \Bx\in [-l_1,l_1]\times(-L_2,L_2),\\
\psi(-2L_1-x_1,x_2) -\psi_1(-2L_1-x_1,x_2) 
    & \hbox{if}\;\; \Bx\in (-L_1,\,-l_1)\times(-L_2,L_2).
\end{cases}
\end{align*}
It is easy to check that $\hat\psi$ is continuous across the two interfaces $x_1=\pm l_1$. Therefore, $\hat\psi\in\Hone[B_L]$.
Since $\bbA$ and $J$ are invariant under the above
reflections in each PML subregion, the definitions of $\tilde\phi$ and $\hat\psi$ indicate 
\begin{align*}
\langle \tilde F,\psi\rangle =\,& \mathscr{A}_{B_L}(\phi,\psi) 
+\int_{L_2\le |x_2|\le L_2+d_2}\int_{-L_1-d_1}^{L_1+d_1}
\big(\bbA\nabla\tilde\phi\cdot\nabla\psi 
-k^2J\tilde\phi\psi\big)\D x_1\D x_2 \\
& +\int_{-L_2}^{L_2}\int_{L_1\le |x_1|\le L_1+d_1}
\big(\bbA\nabla\tilde\phi\cdot\nabla\psi 
-k^2J\tilde\phi\psi\big)\D x_1\D x_2 \\
=\,& \mathscr{A}_{B_L}(\phi,\psi-\psi_1) 
+\int_{-L_2}^{L_2}\int_{L_1\le |x_1|\le L_1+d_1}
\big[\bbA\nabla\phi_1\cdot\nabla(\psi-\psi_1)
-k^2J\phi_1(\psi-\psi_1)\big]\D x_1\D x_2 \\
=\,& \mathscr{A}_{B_L}(\phi,\psi-\psi_1-\hat\psi) .
\end{align*}
The definitions of $\psi_1$ and $\hat\psi$ imply $\psi-\psi_1-\hat\psi =0$ on $\partial B_L$, that is, $\psi-\psi_1-\hat\psi\in\zbHone[B_L]$. Then
\begin{align*}
\big|\langle \tilde F,\psi\rangle\big| =  \big|\langle F,\psi-\psi_1-\hat\psi\rangle\big| 
\le \sTN{F}_{H^{-1}(B_L)} \sTN{\psi-\psi_1-\hat\psi}_{\Hone[B_L]}
\le C\sTN{F}_{H^{-1}(B_L)} \sTN{\psi}_{\Hone[B_L]}.
\end{align*}
We conclude $\sTN{\tilde F}_{H^{-1}(\wt B_L)} \le C\sTN{F}_{H^{-1}(B_L)}$.

By the Hahn-Banach theorem, we may extend $\tilde F$ to a functional
$F_1\in H^{1}(\bbR^2)'$ such that
\[
\sTN{F_1}_{H^{1}(\bbR^2)'}=\sTN{\tilde F}_{H^{-1}(\wt B_L)}
\le C\sTN{F}_{H^{-1}(B_L)}.
\]
By Lemma~\ref{lem:upml-whole}, there exists a unique $w\in H^1(\bbR^2)$ which satisfies
\begin{align}
\mathscr{A}_{\bbR^2}(w,\psi)=\langle F_1,\psi\rangle	
    \quad	\forall\,\psi\in H^1(\bbR^2), \qquad
\sTN{w}_{H^1(\bbR^2)}\le	Ck\sTN{F_1}_{H^{1}(\bbR^2)'}
\le	Ck\sTN{F}_{H^{-1}(B_L)}.    \label{w-est}
\end{align}
Define $z= w-\tilde\phi$. Then \eqref{problem-tF} shows
$\mathscr{A}_{\wt B_L}(z,\psi) = \big\langle F_1,\psi\big\rangle -\big\langle\tilde{F},\psi\big\rangle =0$ for $\psi\in H_0^1(\wt B_L)$.
Therefore,
\[
\Cl z =0 \quad \hbox{in}\;\; \wt B_L. 
\]

Next we define $\wh B_{L}:= (-2L_1+2l_1, 2L_1 -2l_1)\times (-2L_2+2l_2, 2L_2 -2l_2)$, and let $\chi\in C_0^\infty(\bbR^2)$ be a cutoff function which satisfies $\chi\equiv 1$ in $\wh B_L$ and $\chi\equiv 0$ outside $\wt B_L$. Write $\tilde{z} =\chi z$ and extend it by zero to $\bbR^2\backslash\wt B_L$. Using \eqref{eq:pml-fundamental} and integration by parts, we find that, for $\Bx\in B_L$,  
\begin{align*}
J(\Bx)z(\Bx) = \int_{\wt B_L} \Cl_{\By}\wt G(\By,\Bx) \,
J(\By)\tilde{z}(\By)\D\By 
= \int_{\wt B_L}  J(\By)\wt G(\By,\Bx) \,(\Cl\tilde{z})(\By)\D\By 
= -\int_{\wt B_L\backslash\wh B_L} \wt G(\By,\Bx) \zeta(\By) \D\By,
\end{align*}
where $\zeta:= \nabla\cdot(\bbA\nabla\chi)\, z +2(\bbA\nabla\chi)\cdot\nabla z$ admits
\begin{align}\label{zeta-est} 
\NLtwo[\wt B_L]{\zeta}  \le C\TNHone[\wt B_L]{z}
\le C\big(\TNHone[\wt B_L]{w}+\sTN{\phi}_{\Hone[B_L]}\big)  .
\end{align}

Recall the identity $G(\tilde\By,\tilde\Bx) = G(\tilde\Bx,\tilde\By)$ and inequality~\eqref{eq:dG-exp}. The assumption of the theorem shows $|\Bx-\By|>d-\max\{l_1,l_2\}>R$. For $j=0,1$, we have
\begin{align*} 
\int_{\wt B_L\backslash\wh B_L}\int_{B_L} 
\big|\nabla^j_{\Bx} G(\tilde\By,\tilde\Bx)\big|^2 \D\Bx\D\By 
\le\,& Ck^{2j} \int_{\wt B_L\backslash\wh B_L}\int_{B_L}
(k|\Bx-\By|)^{-1}e^{-\frac{1}{4}\sigma_0k|\Bx-\By|} \D\Bx\D\By \\
\le\,& Cdk^{2j-1}
\int_{d-\max\{l_1,l_2\}}^{\infty}
e^{-\frac14\sigma_0kr}\,\D r \\
\le\,& C d k^{2j-2}  e^{-\frac{1}{5}\sigma_0 kd}.
\end{align*}
By \eqref{zeta-est} and the identity $\wt G(\By,\Bx) = J(\Bx)G(\tilde\By,\tilde\Bx)$, we obtain
\begin{align*}
\NLtwo[B_L]{\nabla^jz}^2 
=\,&\int_{B_L}\bigg|\int_{\wt B_L\backslash\wh B_L} 
\nabla^j_{\Bx} G(\tilde\By,\tilde\Bx)
\,\zeta(\By) \D\By\bigg|^2 \D\Bx  \\
\le\,& \NLtwo[\wt B_L\backslash \wh B_L]{\zeta}^2 
\int_{\wt B_L\backslash\wh B_L}\int_{B_L} \big|\nabla^j_{\Bx} 
G(\tilde\By,\tilde\Bx)\big|^2 \D\Bx\D\By \\
\le\,& Cd k^{2j-2}  e^{-\frac{1}{5}\sigma_0 kd}
\Big(\TNHone[\wt{B_L}]{w}^2 +\TNHone[B_L]{\phi}^2\Big),\quad j=0,1.
\end{align*}
We conclude that
\begin{align*}
\TNHone[B_L]{\phi}^2 \le \Big(1+Cd e^{-\frac{1}{5}\sigma_0 kd}\Big) 
\TNHone[\wt{B_L}]{w}^2  
+ C d e^{-\frac{1}{5}\sigma_0 kd}\TNHone[B_L]{\phi}^2.
\end{align*}
The proof is finished by using \eqref{w-est} and setting $\sigma_0d$ large enough such that $C d e^{-\frac{1}{5}\sigma_0 kd}<1/2$.
\end{proof}

\subsection{Well-posedness of \eqref{pml-source}}

We end this section with the well-posedness of problem \eqref{pml-source}. It is a direct result of Theorem~\ref{thm:infsup-BL}. We assume \(d>1.2R\) and that \(\sigma_0d\) is sufficiently large in the rest of the paper.
\vspace{1mm}

\begin{theorem}
Problem \eqref{pml-source} has a unique solution $\hat{u}\in\zbHone[B_L]$. Moreover, there exists a constant $C$ independent of $k$ such that
$\TNHone[B_L]{\hat{u}} \le C\NLtwo[B_l]{f}$.
\end{theorem}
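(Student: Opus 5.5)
The plan is to obtain the statement directly from the inf-sup condition of Theorem~\ref{thm:infsup-BL}, using the Babu\v{s}ka--Ne\v{c}as (generalized Lax--Milgram) framework on $V:=\zbHone[B_L]$ equipped with $\sTN{\cdot}_{\Hone[B_L]}$.

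\emph{Continuity.} The entries of $\bbA$ and $J$ are bounded in modulus by constants depending only on $\sigma_0$. Hence
\[
|\mathscr{A}_{B_L}(w,v)|\le C\,\TNHone[B_L]{w}\,\TNHone[B_L]{v},
\]
with $C$ independent of $k$.

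\emph{Uniqueness and existence.} Theorem~\ref{thm:infsup-BL} gives the inf-sup condition in the first argument, with constant $\mu_1/k$. For the transposed condition, I would use that $\bbA$ is diagonal, so $\bbA^T=\bbA$, and that $J$ is scalar. The bilinear form \eqref{a-R2} has no complex conjugation, so $\mathscr{A}_{B_L}(w,v)=\mathscr{A}_{B_L}(v,w)$. The same inf-sup bound therefore holds with the roles of the arguments interchanged. In particular, $\mathscr{A}_{B_L}(\phi,v)=0$ for all $v\in V$ forces $v\equiv 0$ in the adjoint sense. The Babu\v{s}ka--Ne\v{c}as theorem then yields a unique $\hat u\in V$ solving \eqref{pml-source}.

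As an alternative route to existence, one can note that $\mathscr{A}_{B_L}$ satisfies a G\r{a}rding inequality. This follows from $\Re(\alpha_2/\alpha_1),\Re(\alpha_1/\alpha_2)\ge(1+\sigma_0^2)^{-1}$, as used in the proof of Lemma~\ref{lem:upml-whole}, combined with the compact embedding $H^1_0(B_L)\hookrightarrow L^2(B_L)$. The Fredholm alternative then reduces existence to uniqueness, and uniqueness is immediate from \eqref{eq:infsup-BL}.

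\emph{Stability.} The key observation is that the right-hand side has a small dual norm in the $k$-weighted setting. For $v\in V$,
\[
|(f,v)_{B_l}|\le \NLtwo[B_l]{f}\,\NLtwo[B_L]{v}\le k^{-1}\NLtwo[B_l]{f}\,\TNHone[B_L]{v},
\]
since $k^2\NLtwo[B_L]{v}^2\le \TNHone[B_L]{v}^2$. Applying \eqref{eq:infsup-BL} with $\phi=\hat u$ gives
\[
\frac{\mu_1}{k}\TNHone[B_L]{\hat u}\le \sup_{0\ne v\in V}\frac{|(f,v)_{B_l}|}{\TNHone[B_L]{v}}\le k^{-1}\NLtwo[B_l]{f}.
\]
This yields $\TNHone[B_L]{\hat u}\le \mu_1^{-1}\NLtwo[B_l]{f}$.

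There is no serious obstacle. The only point needing care is the existence part, namely justifying the transposed inf-sup condition or the G\r{a}rding argument. I would rely on the symmetry of the non-conjugated form $\mathscr{A}_{B_L}$, which reduces the transposed condition to Theorem~\ref{thm:infsup-BL} itself.
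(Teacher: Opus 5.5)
Your proposal is correct and follows the paper's proof. Existence and uniqueness come from the inf-sup condition of Theorem~\ref{thm:infsup-BL}; you usefully make explicit the transposed condition via symmetry of the non-conjugated form, which the paper leaves implicit. Stability comes from $|(f,v)_{B_l}|\le k^{-1}\NLtwo[B_l]{f}\,\TNHone[B_L]{v}$ combined with \eqref{eq:infsup-BL}. One small slip: the nondegeneracy statement should say that $\mathscr{A}_{B_L}(w,v)=0$ for all $w\in V$ forces $v=0$, not with the roles of the arguments swapped.
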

\begin{proof}
The existence and uniqueness of the solution come directly from Theorem~\ref{thm:infsup-BL}. To prove the stability,  we use \eqref{eq:infsup-BL} and find
\begin{align*}
\TNHone[B_L]{\hat{u}} \le Ck\sup_{0\neq v\in H_0^1(B_L)}
\frac{|\mathscr{A}_{B_L}(\hat{u},v)|}{\sTN{v}_{H^1(B_L)}}
= Ck\sup_{0\neq v\in H_0^1(B_L)}
\frac{|(f,v)_{B_l}|}{\sTN{v}_{H^1(B_L)}}
\le C\NLtwo[B_l]{f}.
\end{align*}
The proof is finished.
\end{proof}


\section{Stability and error estimate for the truncated obstacle problem}
\label{sec:obstacle}

In this section, we study the truncated UPML problem \eqref{eq:weak-hu} with a nonempty obstacle $D$. The proofs are based on the wavenumber-explicit analyses in the previous section.

\subsection{Inf-sup condition for the original problem}

Let $B_{l_0}({\bf 0})$ and $B_{R_0}({\bf 0})$ be the balls with centers at the origin and radii $l_0:=\frac{1}{2}\min\{l_1,l_2\}$ and $R_0 = \sqrt{l_1^2+l_2^2}$, respectively. Clearly $B_{l_0}({\bf 0})\subset B_l \subset B_{R_0}({\bf 0})$. We choose $l_0$ large enough such that $D\subset B_{l_0}({\bf 0})$, and write
\[
\Gamma_{l_0}:=\partial B_{l_0}({\bf 0}),\quad
\Omega_{l_0}:= B_{l_0}({\bf 0})\backslash\ol{D}, \quad 
\Gamma_{R_0}:=\partial B_{R_0}({\bf 0}),\quad
\Omega_{R_0}:= B_{R_0}({\bf 0})\backslash\ol{D}.
\]

Following \cite{ChandlerWildeMonk2008}, we define a bilinear form $b:H^1_{\Gamma}(\Omega_{R_0})\times H^1_{\Gamma}(\Omega_{R_0}) \to \bbC$ as follows
\begin{align*}
b(\phi,v):= \int_{\Omega_{R_0}} (\nabla \phi\cdot\nabla v - k^2 \phi v) -\langle T(\phi),v\rangle_{\Gamma_{R_0}},
\end{align*}
where $T: H^{1/2}(\Gamma_{R_0}) \to H^{-1/2}(\Gamma_{R_0})$ is the Dirichlet-to-Neumann operator defined by 
\begin{align} \label{eq:DtN-R0}
\forall\, \xi = \sum_{m\in\bbZ} \xi_m e^{\Vi m\theta} \in H^{1/2}(\Gamma_{R_0}), \quad 
T(\xi) := k \sum_{m\in\bbZ} \frac{H^{(1)\,\prime}_m(kR_0)}{H^{(1)}_m(kR_0)} 
\xi_m e^{\Vi m\theta},\quad \theta\in [0,2\pi].
\end{align}
Here $H^{(1)}_m$ denotes the $m^{\rm th}$-order Hankel function of the first kind.

\begin{lemma}[{\cite[Corollary~3.9]{ChandlerWildeMonk2008}}]\label{lem:infsup-b}   
There exists a constant $\mu_b>0$ independent of $k$ such that
\begin{align}\label{eq:infsup-b}
\sup_{0\ne v\in H^1_{\Gamma}(\Omega_{R_0})} 
\frac{|b(\phi,v)|}{\TNHone[\Omega_{R_0}]{v}} 
\ge \frac{\mu_b}{k} \TNHone[\Omega_{R_0}]{\phi} 
\quad \forall\,\phi\in H^1_{\Gamma}(\Omega_{R_0}).
\end{align}
\end{lemma}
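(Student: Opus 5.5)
The statement is cited from \cite[Corollary~3.9]{ChandlerWildeMonk2008}, so the plan is to reconstruct their argument rather than use the UPML kernel estimates of section~\ref{sec:green}. The route is: first prove a $k$-explicit a priori bound for the DtN-truncated problem, then convert it into the inf-sup statement by the duality argument already used in Theorem~\ref{thm:infsup-R2}.

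\textbf{Step 1 (reduction).} Fix $\phi\in H^1_\Gamma(\Omega_{R_0})$ and define $F(v):=b(\phi,v)$. By G{\aa}rding's inequality for $b$ (the real part of $-\langle T\xi,\bar\xi\rangle_{\Gamma_{R_0}}$ is nonnegative) and the unique solvability of the exterior problem, Fredholm theory gives that $b$ induces an isomorphism. It therefore suffices to prove
\[
\TNHone[\Omega_{R_0}]{w}\le Ck\,\sTN{F}_{(H^1_\Gamma(\Omega_{R_0}))'}
\]
for every solution $w$ of $b(w,v)=F(v)$ for all $v$.

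\textbf{Step 2 ($L^2$ data).} Take $F(v)=(g,v)$ with $g\in L^2(\Omega_{R_0})$. The goal here is $\TNHone{w}\le C\NLtwo{g}$. Extend $w$ outside $B_{R_0}$ as the radiating solution via the Hankel series; this extension exists precisely because $T$ is the DtN map. Then apply the Morawetz--Rellich identity with the multiplier $\Bx\cdot\nabla\bar w-\Vi kR_0\bar w+\tfrac12\bar w$ over the annulus-like region. The ingredients are:
\begin{itemize}
\item the starlike assumption, which gives $\Bx\cdot\Bn\ge0$ on $\Gamma$ and hence a favorable sign for the obstacle boundary term;
\item the DtN properties $\Re\langle T\xi,\bar\xi\rangle\le0$ and $\Im\langle T\xi,\bar\xi\rangle\ge0$;
\item the monotonicity bound $|H^{(1)}_m(kR_0)|^2$ in $m$, used to control the boundary terms on $\Gamma_{R_0}$.
\end{itemize}
Together these yield the $k$-uniform bound. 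This step is the main obstacle, because it needs the delicate sign bookkeeping of the boundary terms on $\Gamma_{R_0}$ and the Bessel-function estimates.

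\textbf{Step 3 ($H^{-1}$ data).} For a general $F$, follow the template of Lemma~\ref{lem:upml-whole}. First solve the coercive problem
\[
b_+(w_0,v):=\int_{\Omega_{R_0}}(\nabla w_0\cdot\nabla v+k^2w_0v)-\langle Tw_0,v\rangle_{\Gamma_{R_0}}=F(v),
\]
which is coercive in the $k$-weighted norm, so $\TNHone{w_0}\le C\sTN{F}$. The remainder $w-w_0$ then solves the problem of Step~2 with data $2k^2w_0$. Step~2 gives $\TNHone{w-w_0}\le Ck^2\NLtwo{w_0}\le Ck\sTN{F}$. Combining the two estimates proves \eqref{eq:infsup-b} with $\mu_b=1/C$.
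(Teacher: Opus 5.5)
Your Steps~1 and~3 are sound. The shift by the coercive form $b_+$ (coercive because $-\Re\langle T\xi,\bar\xi\rangle\ge0$) is exactly the template of Lemma~\ref{lem:upml-whole}. The Fredholm argument is not even needed: $w-w_0$ satisfies $b(w-w_0,v)=(2k^2w_0,v)$ directly, and the inf-sup bound is just the a priori estimate applied to $w=\phi$.

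The gap is Step~2, which carries all the content of the cited corollary.

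First, the middle term of your multiplier has the wrong sign. For the outgoing condition you must test with $\overline{\Bx\cdot\nabla w-\Vi kR_0w+\tfrac12 w}=\Bx\cdot\nabla\bar w+\Vi kR_0\bar w+\tfrac12\bar w$. With your choice the $\Gamma_{R_0}$ term contains $R_0\int_{\Gamma_{R_0}}|\partial_r w+\Vi kw|^2\,ds$. For outgoing fields this is of size $k^2R_0\|w\|^2_{L^2(\Gamma_{R_0})}$ and cannot be absorbed.

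Second, with the correct sign the boundary contribution on $\Gamma_{R_0}$ is
\[
\int_{\Gamma_{R_0}}\Big[R_0|\partial_r w-\Vi kw|^2-R_0|\nabla_T w|^2+\Re(\bar w\,\partial_r w)\Big]\,ds ,
\]
and everything hinges on proving that this is $\le 0$. The ingredients you list do not give it:
\begin{itemize}
\item $\Re\langle Tw,\bar w\rangle\le0$ only disposes of the last term.
\item Using $\Im\langle Tw,\bar w\rangle\ge0$ on its own discards the cancellation between $R_0(|\partial_r w|^2+k^2|w|^2)$ and $-2kR_0\Im(\bar w\,\partial_r w)$. That cancellation is exactly what makes $|\partial_r w-\Vi kw|^2$ small.
\item Monotonicity of $|H^{(1)}_m(kR_0)|$ in $m$ says nothing about how $R_0|\partial_r w-\Vi kw|^2$ compares with $R_0|\nabla_T w|^2$. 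Mode by mode, this is a quantitative estimate on $t|H^{(1)}_m(t)|^2$ and its $t$-derivative.
\end{itemize}

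The missing idea is the Morawetz--Ludwig inequality for radiating fields. Apply the same identity on the annulus $B_{R'}\setminus\overline{B_{R_0}}$, but with the multiplier $\overline{r(\partial_r w-\Vi kw)+\tfrac12 w}$, where $r=|\Bx|$. There $\Delta w+k^2w=0$, and the divergence of the associated flux equals $|\nabla_T w|^2+|\partial_r w-\Vi kw|^2\ge0$. Hence the flux through $\Gamma_{R_0}$ is bounded by the flux through $\Gamma_{R'}$, namely
\[
\int_{\Gamma_{R'}}\Big[R'|\partial_r w-\Vi kw|^2-R'|\nabla_T w|^2+\Re(\bar w\,\partial_r w)\Big]\,ds .
\]
Its $\limsup$ as $R'\to\infty$ is $\le 0$ by the radiation condition, since $\partial_r w-\Vi kw=O(r^{-3/2})$ and $w=O(r^{-1/2})$. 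At $r=R_0$ this flux coincides with the boundary term of your interior identity. In the interior the constant $R_0$ is essential, because it produces the volume term $-\|\nabla w\|^2-k^2\|w\|^2$.

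Now combine this with $\Bx\cdot\Bn\le0$ on $\Gamma$ ($\Bn$ pointing into $D$) and the bound $\NLtwo[\Omega_{R_0}]{\Bx\cdot\nabla w-\Vi kR_0 w+\tfrac12 w}\le(\sqrt2R_0+\tfrac{1}{2k})\TNHone[\Omega_{R_0}]{w}$. This yields $\TNHone[\Omega_{R_0}]{w}\le(2\sqrt2R_0+k^{-1})\NLtwo[\Omega_{R_0}]{g}$, and your Step~3 then completes the proof.
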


\subsection{Inf-sup condition for $\mathscr{A}_{\Omega_L}$}

Now we first establish the inf-sup condition for $\mathscr{A}_{D^c}$, and then prove the inf-sup condition for $\mathscr{A}_{\Omega_L}$ by arguments similar to the proof of Theorem~\ref{thm:infsup-BL}.

\begin{lemma}\label{lem:est-G-obstacle}
There exists a constant $C>0$ independent of $k$ such that
\begin{align*}
\int_{B_l^c}\int_{\Omega_{l_0}} 
\big|\nabla_{\Bx}^j\wt G(\Bx,\By)\big|^2\D\By\D\Bx 
\le Ck^{2j-2},\quad j=0,1.
\end{align*}
\end{lemma}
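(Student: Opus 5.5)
Since $\By\in\Omega_{l_0}\subset B_l$, we have $J(\By)=1$ and $\tilde\By=\By$, so $\wt G(\Bx,\By)=G(\tilde\Bx,\By)$. The plan is to use the geometric separation between $\Omega_{l_0}$ and $B_l^c$ together with the pointwise bounds of Lemma~\ref{lem:decay-stretchedG}. For $\By\in B_{l_0}({\bf 0})$ and $\Bx\in B_l^c$ we have $|\Bx-\By|\ge \min\{l_1,l_2\}-l_0=l_0>0$, so the singularity of the kernel never enters. We then split $B_l^c$ into the far region $\{\Bx:|\Bx-\By|\ge R\}$ and the near region $\{\Bx\in B_l^c: l_0\le|\Bx-\By|<R\}$.

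\emph{Far region.} By \eqref{eq:dG-exp}, $|\nabla^j_{\Bx}G(\tilde\Bx,\By)|^2\le Ck^{2j}(k|\Bx-\By|)^{-1}e^{-\frac14\sigma_0k|\Bx-\By|}$ (the term $(k|\Bx-\By|)^{-j}$ is dominated since $k|\Bx-\By|\ge kR\ge R$). Exactly as in the proof of \eqref{eq:G-tail-L2}, polar coordinates centered at $\By$ give $\int_{|\Bx-\By|\ge R}|\nabla^j_{\Bx}G|^2\D\Bx\le Ck^{2j-2}e^{-\frac14\sigma_0kR}$. Integrating over the bounded set $\Omega_{l_0}$ keeps this bound.

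\emph{Near region.} For $l_0\le|\Bx-\By|<R$, we use \eqref{eq:dG-decay}. The key point is to quantify $\Im\rho$ through \eqref{eq:Imrho-lower}. For $\Bx\in S_1^+$, the $j=1$ term gives
\[
\Im\rho\ge \sigma_0(x_1-y_1)(x_1-l_1)/|\Bx-\By|\ge \sigma_0 \tfrac{l_0}{R}(x_1-l_1),
\]
since $x_1-y_1\ge l_1-l_0\ge l_0$. The other terms in \eqref{eq:Imrho-lower} are nonnegative, so this lower bound survives. The same argument applies in each strip $S_j^\pm$, with the distance $\delta(\Bx):=\Dist{\Bx}{B_l}$ measured in the relevant coordinate. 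Since $|\Bx-\By|\ge l_0$, the algebraic factors are bounded by constants, and we get
\[
|\nabla^j_{\Bx}G|^2\le Ck^{2j}k^{-1}e^{-c\,k\,s},
\]
where $s\ge0$ is the normal distance from $\Bx$ into the layer within $S_j^\pm$ and $c=\sigma_0 l_0/R$. The near region is contained in a bounded set, so integrating first in $s$ gives $\int_0^\infty e^{-cks}\D s\le C/k$, while the tangential variable ranges over a bounded interval. This yields $Ck^{2j-2}$. Corner regions, where two strips overlap, are handled by using either strip's bound.

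Summing the two regions and integrating over $\By\in\Omega_{l_0}$ completes the estimate. The main point needing care is the near-region step, where the lower bound on $\Im\rho$ must grow linearly with the depth into the layer. This is exactly what the separation $x_j-y_j\ge l_0$ supplies through \eqref{eq:Imrho-lower}. Without that separation, as in Lemma~\ref{lem:chim}, one would need the Carleman argument. Here the $k^{-1}$ from the $s$-integration, times the $k^{-1}$ from the Hankel decay, gives the required $k^{2j-2}$.
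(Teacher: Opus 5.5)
Your proposal is correct and follows essentially the same route as the paper. You split $B_l^c$ into the far part $|\Bx-\By|\ge R$, handled by \eqref{eq:dG-exp}, and the near part, where the separation $x_j-y_j\ge l_0$ in \eqref{eq:Imrho-lower} gives $\Im\rho(\tilde\Bx,\By)\ge R^{-1}l_0\sigma_0$ times the depth into the layer, and integrating in that depth supplies the extra factor $k^{-1}$. The only difference is cosmetic: the paper measures depth by $r(\Bx)=\max\{|x_1|-l_1,|x_2|-l_2\}$, whereas you cover $B_l^c$ by the strips $S_j^\pm$.
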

\begin{proof}
Recall that $|\Bx-\By|\ge l_0$ and $\wt G(\Bx,\By)=G(\tilde\Bx,\By)$ for $\Bx\in B_l^c$ and $\By\in\Omega_{l_0}$.
Using Lemma~\ref{lem:decay-stretchedG} and inequality \eqref{eq:dG-exp}, we deduce that
\begin{align}\label{est-djG-1}
\int_{B_l^c}\int_{\Omega_{l_0}} 
\big|\nabla_{\Bx}^j\wt G(\Bx,\By)\big|^2\D\By\D\Bx 
\le\,& C k^{2j-1}\int_{\Omega_{l_0}}\int_{B_l^c}
\frac{e^{-k\Im\rho(\tilde\Bx,\By)}}{|\Bx-\By|} \D\Bx\D\By \notag \\
\le\,& C k^{2j-1}\int_{\Omega_{l_0}}\bigg(\int_{B_l^c\cap B_R(\By)}
\frac{e^{-k\Im\rho(\tilde\Bx,\By)}}{|\Bx-\By|} \D\Bx
+\int_R^\infty e^{-\frac{1}{4}\sigma_0 kr}\D r\bigg)\D\By \notag \\
\le\,& C k^{2j-2}e^{-\tfrac14 \sigma_0 kR} + C k^{2j-1}
\int_{\Omega_{l_0}}\int_{B_l^c\cap B_R(\By)}
\frac{e^{-k\Im\rho(\tilde\Bx,\By)}}{|\Bx-\By|} \D\Bx \D\By.
\end{align} 
To estimate the second term on the right-hand side, we define
\begin{align*}
r(\Bx) = \max\{|x_1|-l_1, |x_2|-l_2\},\quad \xi(\Bx,\By) = 
\begin{cases}
|x_1-y_1| & \hbox{if}\;\; |x_1|-l_1 \ge |x_2|-l_2, \\
|x_2-y_2| & \hbox{otherwise.}
\end{cases}
\end{align*}
It is clear that $\xi(\Bx,\By)\ge l_0$ and $|\Bx-\By|\le R$ for $\By\in\Omega_{l_0}$ and $\Bx\in B_l^c\cap B_R(\By)$.
Then \eqref{eq:Imrho-lower} implies that $\Im\rho(\tilde\Bx,\By)\ge R^{-1}l_0\sigma_0 r(\Bx)$. Therefore, we obtain 
\begin{align*}
\int_{\Omega_{l_0}}\int_{B_l^c\cap B_R(\By)}
\frac{e^{-k\Im\rho(\tilde\Bx,\By)}}{|\Bx-\By|} \D\Bx \D\By
\le C\int_{B_l^c\cap B_R({\bf 0})}
e^{-k R^{-1}l_0\sigma_0 r(\Bx)} \D\Bx
\le C\int_0^R e^{-k R^{-1}l_0\sigma_0 t}\D t  
\le \frac{C}{k}.
\end{align*}
The proof is finished by inserting this inequality into the right-hand side of \eqref{est-djG-1}. 
\end{proof}
\vspace{1mm}

\begin{lemma}\label{lem:phi-Dc}
Suppose $\psi\in\Hone[\Omega_l]$. There exists a $\phi\in\Hone[D^c]$ such that $\phi =\psi$ on $\Gamma$ and 
\begin{align} \label{eq:weak-phi-Dc}
\mathscr{A}_{D^c}(\phi,v) = 0 \quad
\forall\, v\in H^1_{\Gamma}(D^c).
\end{align}
Moreover, there exists a constant $C>0$ independent of $k$ such that
\begin{align}\label{eq:phi-est}
\TNHone[D^c]{\phi} \le C \TNHone[\Omega_l]{\psi}
+ Ck \sup_{0\ne v\in\zbHone[\Omega_l]} 
\frac{|(\nabla\psi,\nabla v)_{\Omega_l}
-k^2(\psi,v)_{\Omega_l}|}{\TNHone[\Omega_l]{v}}.
\end{align}
\end{lemma}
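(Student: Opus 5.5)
Given $\psi\in\Hone[\Omega_l]$, I will build $\phi$ in two pieces: a lift of the boundary data, corrected to be a Helmholtz solution near the obstacle, plus a whole-space UPML correction. The target for the main piece is
\[
\phi = \chi\psi_0 + w, \qquad \mathscr{A}_{\bbR^2}(w,v)=\langle F,v\rangle ,
\]
where $\chi$ is a cutoff and $w$ is controlled by Lemma~\ref{lem:upml-whole}.

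\textbf{Step 1: a near-field Helmholtz solution.} First I set $\psi_0:=\psi-\eta$, where $\eta\in\zbHone[\Omega_l]$ would ideally satisfy $-\Delta\eta-k^2\eta=-\Delta\psi-k^2\psi$ weakly, so that $\psi_0$ solves the Helmholtz equation with data $\psi$ on $\Gamma$. Since the interior Dirichlet problem on $\Omega_l$ may be resonant, I will not solve it on $\Omega_l$. Instead I use the DtN form $b$ on $\Omega_{R_0}$ with Lemma~\ref{lem:infsup-b}:
\begin{itemize}
\item extend $\psi$ to $\Omega_{R_0}$ with a cutoff supported in $B_l$ that equals $1$ near $\Gamma$, so it vanishes on $\Gamma_{R_0}$;
\item find $\eta\in H^1_\Gamma(\Omega_{R_0})$ with $b(\eta,v)=b(\tilde\psi,v)$ for all $v$.
\end{itemize}
Then $u_0:=\tilde\psi-\eta$ vanishes on $\Gamma$ after correcting the sign convention. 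I actually want $\phi|_\Gamma=\psi$, so I take $u_0$ with trace $\psi$ on $\Gamma$ and $b(u_0,v)=0$. The inf-sup bound gives
\[
\TNHone[\Omega_{R_0}]{\eta}\le Ck\,\sTN{b(\tilde\psi,\cdot)}.
\]
The functional $b(\tilde\psi,\cdot)$ splits into the Helmholtz residual of $\psi$, which is tested against $\zbHone$ functions (the sup term in \eqref{eq:phi-est}), and cutoff commutator terms.

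\textbf{The main obstacle} is this splitting. A general $v\in H^1_\Gamma(\Omega_{R_0})$ must be decomposed as $v=\chi v+(1-\chi)v$. The piece $\chi v$ is not in $\zbHone[\Omega_l]$ unless $\chi$ vanishes near $\Gamma_l$, and it is not clear that $\chi\psi$'s commutator terms are bounded by $k\TNHone{\psi}$ without losing a factor $k$. I would try the following: choose $\chi\equiv1$ on a neighborhood of $\ol{D}$ with $\supp\chi\subset B_{l_0}(\mathbf 0)$. Then the residual part involves only $\chi v\in\zbHone[\Omega_l]$. The commutator $\nabla\chi\cdot(\psi\nabla v - v\nabla\psi)$ is bounded by $C\TNHone{\psi}\,\TNHone{v}/k$ in the weighted norm; if it is not, bound it crudely, accepting that $k\cdot k^{-1}$ yields $C\TNHone{\psi}$.

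\textbf{Step 2: UPML extension.} The radiating solution $u_0$ extends outside via the DtN map to a radiating solution $u^*$ on $D^c$. I then take the UPML continuation $\phi:=\tilde u^*$, defined by the stretched representation as in \eqref{eq:tu}. It equals $u_0$ in $\Omega_l$ and satisfies \eqref{eq:weak-phi-Dc}.

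Estimating $\phi$ in $B_l^c$ would use a Green representation with density on a curve inside $\Omega_{l_0}$, together with Lemma~\ref{lem:est-G-obstacle}. That is messy, so I prefer the following route.
\begin{itemize}
\item Write $\phi=\chi u_0+w$, where $\chi$ is a cutoff equal to $1$ near $D$ and supported in $B_{l_0}(\mathbf 0)$.
\item Then $w\in H^1(\bbR^2)$, extended by zero inside $D$, solves $\mathscr{A}_{\bbR^2}(w,v)=\langle F,v\rangle$.
\item The functional $F$ consists of commutator terms $\nabla\chi\cdot\nabla u_0$ and $u_0\Delta\chi$, supported in $B_{l_0}$ where $\bbA=I$ and $J=1$.
\end{itemize}

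\textbf{Step 3: estimating $w$.} Applying Lemma~\ref{lem:upml-whole} directly gives $\TNHone[\bbR^2]{w}\le Ck\,\sTN{F}_{H^1(\bbR^2)'}\le Ck\,\TNHone{u_0}$, which is too weak. I therefore use the representation
\[
w(\Bx)=\int \wt G(\Bx,\By)\,F(\By)\,\D\By ,
\]
where $F\in L^2$ is supported in $\Omega_{l_0}$ with $\NLtwo{F}\le C\TNHone[\Omega_{R_0}]{u_0}$. Then:
\begin{itemize}
\item on $B_l^c$, Lemma~\ref{lem:est-G-obstacle} and Cauchy–Schwarz give $\TNHone[B_l^c]{w}\le C\NLtwo{F}$;
\item in $B_l$, where $w$ solves the unstretched radiating Helmholtz equation, the Melenk–Sauter bound cited in Theorem~\ref{thm:green-kernel} gives the same estimate.
\end{itemize}
Combining these with the bound on $\eta$ yields \eqref{eq:phi-est}.
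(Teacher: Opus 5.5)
Your proposal is correct. Its first half is the paper's proof. Your $u_0$ is the paper's radiating solution $w$ of the exterior Dirichlet problem, your $\tilde\psi$ is $\psi_\chi$, and the bound on $\eta=\tilde\psi-u_0$ is \eqref{w-psi-chi} from Lemma~\ref{lem:infsup-b}, with the same splitting of $b(\tilde\psi,\cdot)$. No fallback is needed there: each commutator term pairs an $L^2$ norm with a gradient, so it is bounded by $Ck^{-1}\sTN{\psi}_{H^1(\Omega_l)}\sTN{v}_{H^1(\Omega_{R_0})}$. Your $\phi$ is also the same UPML continuation.

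The routes differ in how $\sTN{\phi}_{H^1(D^c)}$ is bounded. The paper first bounds $\|\phi\|_{L^2(D^c)}$ by $Ck^{-1}\sTN{w}_{H^1(\Omega_{l_0})}$ using the global kernel estimate of Theorem~\ref{thm:green-kernel}. It then recovers $\nabla\phi$ by a G{\aa}rding-type argument: it tests \eqref{eq:weak-phi-Dc} with $\phi-\chi_1\psi$ and uses the coercivity of $\Re\,\bbA$. You instead split $\phi=\chi u_0+w$, where $w$ is the UPML Newton potential of the commutator source $g=u_0\Delta\chi+2\nabla\chi\cdot\nabla u_0$, which is supported in $B_{l_0}(\mathbf 0)$. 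You then bound $w$ directly: on $B_l^c$, pointwise Cauchy--Schwarz with Lemma~\ref{lem:est-G-obstacle} gives $\|\nabla^j w\|_{L^2(B_l^c)}\le Ck^{j-1}\|g\|_{L^2(\Omega_{l_0})}$ for $j=0,1$ at once. Because the source stays a fixed distance from the layer, you only need this separated-kernel estimate. Your route therefore avoids both the dyadic analysis behind Theorem~\ref{thm:green-kernel} and the coercivity step. The paper's route is shorter to write once Theorem~\ref{thm:green-kernel} is available.

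A few simplifications and corrections:
\begin{itemize}
\item Inside $B_l$ you do not need Melenk--Sauter. There $w=\phi-\chi u_0=(1-\chi)u_0$, and $w=0$ in $D$, so $\sTN{w}_{H^1(\Omega_l)}\le C\sTN{u_0}_{H^1(\Omega_l)}$ is immediate.
\item Your claim that Lemma~\ref{lem:upml-whole} is ``too weak'' is a miscount. Since $g\in L^2$, we have $|(g,v)|\le\|g\|_{L^2}\|v\|_{L^2}\le k^{-1}\|g\|_{L^2}\sTN{v}_{H^1(\bbR^2)}$, so $\sTN{g}_{H^1(\bbR^2)'}\le Ck^{-1}\sTN{u_0}_{H^1(\Omega_{R_0})}$. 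The lemma then already gives $\sTN{w}_{H^1(\bbR^2)}\le C\sTN{u_0}_{H^1(\Omega_{R_0})}$, at the price of relying on Theorem~\ref{thm:green-kernel} implicitly.
\item $u_0=\tilde\psi-\eta$ has trace $\psi$ on $\Gamma$ directly; there is no sign issue.
\item Since $u_0$ has nonzero Dirichlet data, its stretched representation needs the double-layer term, as in \eqref{def-tw}, not just the form \eqref{eq:tu}. Alternatively, define $\phi:=\chi u_0+\widetilde N g$ outright. Then $\widetilde N g=Ng=(1-\chi)u_0$ in $B_l$ by uniqueness of radiating solutions, so $\phi=u_0$ in $\Omega_l$ and $\phi|_\Gamma=\psi$. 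This makes Step~2 self-contained.
\end{itemize}
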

\begin{proof}
Classical theory shows that the scattering problem
\begin{align}\label{helmholtz-w}
\Delta w + k^2 w =0 \;\; \hbox{in}\; D^c, \quad
w = \psi \;\; \hbox{on}\; \Gamma, \quad 
\lim_{r\to\infty} \sqrt{r}\big(\partial_r w -\Vi k w\big) =0,
\end{align}
has a unique solution $w\in H^1_{\rm loc}(D^c)$. Moreover, $w$ admits the weak formulation
\begin{align}\label{eq:bwv}
b(w,v) = 0 \quad \forall\, v\in H^1_{\Gamma}(\Omega_{R_0}).
\end{align}

Let $\chi\in C^\infty_0(\bbR^2)$ be a cut-off function satisfying $\chi \ge 0$, $\chi\equiv 0$ in $B_l^c$, and $\chi\equiv 1$ in $D$. We extend $\chi\psi$ by zero to the exterior of $B_l$ and denote the extension by $\psi_\chi$.
Using \eqref{eq:infsup-b} and \eqref{eq:bwv}, we have
\begin{align} \label{w-psi-chi}
\TNHone[\Omega_{R_0}]{w- \psi_\chi}
\le Ck \sup_{0\ne v\in H^1_{\Gamma}(\Omega_{R_0})} 
\frac{|b(w-\psi_\chi,v)|}{\TNHone[\Omega_{R_0}]{v}}
= Ck \sup_{0\ne v\in H^1_{\Gamma}(\Omega_{R_0})} 
\frac{|b(\psi_\chi,v)|}{\TNHone[\Omega_{R_0}]{v}}.
\end{align}
Clearly $\chi v\in\zbHone[\Omega_l]$ for $v\in H^1_{\Gamma}(\Omega_{R_0})$. Proper arrangement of the integrand shows that
\begin{align*}
b(\psi_\chi,v) = \int_{\Omega_l}
(\nabla\psi_\chi \cdot \nabla v -k^2\psi_\chi v) 
=\int_{\Omega_l}(\psi\nabla v -v \nabla\psi)\cdot \nabla\chi
+\langle F,\chi v\rangle,
\end{align*}
where $F\in H^{-1}(\Omega_l)$ is the linear functional defined by
\[
\langle F,\varphi\rangle := 
\int_{\Omega_l}(\nabla\psi\cdot \nabla\varphi 
-k^2\psi\varphi)  \quad \forall\, \varphi\in\zbHone[\Omega_l].
\] 
There exists a constant depending only on $\N{\chi}_{W^{1,\infty}(\Omega_l)}$ such that
\begin{align*}
\SN{b(\psi_\chi,v)}\le \,& \N{\psi\nabla v -v \nabla\psi}_{\BL^1(\Omega_l)}
\SN{\chi}_{W^{1,\infty}(\Omega_l)} 
+C\sTN{F}_{H^{-1}(\Omega_l)}\TNHone[\Omega_l]{\chi v}  \\
\le\,& C k^{-1}\TNHone[\Omega_l]{\psi}  \TNHone[\Omega_l]{v}
+C\sTN{F}_{H^{-1}(\Omega_l)}\TNHone[\Omega_l]{v}.
\end{align*}
Inserting this inequality into \eqref{w-psi-chi} and using the triangle inequality, we obtain
\begin{align}\label{eq:est-w-R0}
\TNHone[\Omega_{R_0}]{w} 
\le C\TNHone[\Omega_l]{\psi} + Ck \sTN{F}_{H^{-1}(\Omega_l)}.
\end{align}

It is known that $w$ admits the integral representation
\begin{align*}
w(\Bx) = \int_{\Gamma}\bigg[\frac{\partial G(\Bx,\By)}{\partial\Bn(\By)}w(\By)
- G(\Bx,\By)\, \frac{\partial w}{\partial\Bn}(\By) \bigg]\D s(\By) \quad \forall\,\Bx \in D^c.
\end{align*}
Similar to \eqref{eq:tu-1} and \eqref{eq:weak-tu}, we define the complex extension of $w$ by
\begin{align} \label{def-tw}
\tilde{w}(\Bx) = \int_{\Gamma}\bigg[\frac{\partial \wt G(\Bx,\By)}{\partial\Bn(\By)}w(\By)
- \wt G(\Bx,\By)\, \frac{\partial w}{\partial\Bn}(\By) \bigg]\D s(\By) \quad \forall\,\Bx \in D^c.
\end{align} 
Let $\chi_1\in C^\infty_0(\bbR^2)$ be another cut-off function which satisfies
$\chi_1 \ge 0$, $\chi_1\equiv 0$ outside $B_{l_0}({\bf 0})$, and $\chi_1\equiv 1$ on $\ol D$. For $\Bx\in B_l^c$, integration by parts shows that
\begin{align*} 
\tilde w(\Bx) =\,& \int_{\Gamma}\bigg[\frac{\partial \wt G(\Bx,\By)}{\partial\Bn(\By)}(\chi_1w)(\By)
- \wt G(\Bx,\By)\, \frac{\partial (\chi_1w)}{\partial\Bn}(\By) \bigg]\D s(\By) \\
=\,& \int_{\Omega_{l_0}}\Big[(\chi_1w)(\By) (\Delta_{\By} +k^2) \wt G(\Bx,\By)
- \wt G(\Bx,\By) (\Delta+k^2)(\chi_1w)(\By) \Big]\D\By \\
=\,& -\int_{\Omega_{l_0}}\wt G(\Bx,\By) g(\By)\D\By,
\end{align*} 
where $g:=\Delta\chi_1w +2\nabla\chi_1\cdot\nabla w$. Set $\phi:=\tilde{w}$. Using Theorem~\ref{thm:green-kernel}, we deduce that
\begin{align*}
\NLtwo[D^c]{\phi}\le  Ck^{-1}\NLtwo[\Omega_{l_0}]{g} \le
Ck^{-1}\TNHone[\Omega_{l_0}]{w}.
\end{align*}
From \eqref{eq:pml-fundamental} and \eqref{def-tw}, we have $\phi=w$ in $\Omega_l$ and
\[
\Cl\phi =0\quad \hbox{in}\; D^c,\qquad 
\phi=\psi\quad  \hbox{on}\; \Gamma.
\]
Moreover, the inequality \eqref{eq:dG-exp} implies $\phi\in\Hone[D^c]$. Multiplying both sides of the above equation by $v\in H^1_\Gamma(D^c)$ and integrating by parts, we obtain \eqref{eq:weak-phi-Dc}.

Taking $v=\phi-\chi_1\psi$ in \eqref{eq:weak-phi-Dc} and using arguments similar to \eqref{eq:dPhi-est}, we obtain
\begin{align*}
\SNHone[D^c]{v}^2  \le\,& C\Re \int_{D^c}\bbA\nabla v\cdot\nabla\bar{v} 
= C\Re\bigg[-\mathscr{A}_{D^c}(\chi_1\psi,\bar{v}) 
+k^2\int_{D^c}J\SN{v}^2\bigg] \\
\le\,& C\SNHone[\Omega_{l_0}]{\chi_1\psi}\SNHone[\Omega_{l_0}]{v}
+Ck^2\big(\NLtwo[\Omega_{l_0}]{\psi}^2+\NLtwo[D^c]{v}^2\big) \\
\le\,& \frac{1}{2}\SNHone[\Omega_{l_0}]{v}^2 
+C\TNHone[\Omega_{l_0}]{\psi}^2 +Ck^2\NLtwo[D^c]{\phi}^2.
\end{align*}
By \eqref{eq:est-w-R0} and the fact that $\phi=w$ in $\Omega_l$, we obtain
\begin{align*} 
\TNHone[D^c]{\phi} \le C\TNHone[\Omega_{l_0}]{\psi} + C\TNHone[D^c]{v}
\le C\TNHone[\Omega_l]{\psi} +C\TNHone[\Omega_l]{w}
\le C\TNHone[\Omega_l]{\psi} + C k\sTN{F}_{H^{-1}(\Omega_l)}.
\end{align*}
The proof is finished.
\end{proof}

\begin{lemma} \label{lem:infsup-Dc}
There exists a constant $\mu_2>0$ independent of $d_1$, $d_2$, and $k$ such that
\begin{align*}
\sup_{0\ne v\in H^1_{\Gamma}(D^c)} \frac{|\mathscr{A}_{D^c}(\phi,v)|}{\TNHone[D^c]{v}}
\ge \frac{\mu_2}{k} \TNHone[D^c]{\phi} \quad \forall\,\phi\in H^1_{\Gamma}(D^c).
\end{align*}
\end{lemma}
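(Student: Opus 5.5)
Given $\phi\in H^1_\Gamma(D^c)$, define $F\in H^1_\Gamma(D^c)'$ by $\langle F,v\rangle:=\mathscr{A}_{D^c}(\phi,v)$. It suffices to show $\TNHone[D^c]{\phi}\le Ck\sTN{F}$, with $C$ independent of $k$. The plan is to split $\phi$ into a whole-space part, controlled by Lemma~\ref{lem:upml-whole}, and a homogeneous correction, controlled by Lemma~\ref{lem:phi-Dc}.

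\textbf{Step 1: extension to $\bbR^2$.} Extend $\phi$ by zero into $D$; since $\phi=0$ on $\Gamma$, the extension $\phi_0$ lies in $\Hone[\bbR^2]$. Let $E:H^1_\Gamma(D^c)\to\Hone[\bbR^2]$ be a bounded extension operator, uniformly bounded in the $k$-weighted norms. One way to get this is to extend $\psi|_{\Omega_{l}}$ with a fixed Stein extension applied to the rescaled function, or simply with a cut-off near $\Gamma$. Define the functional
\[
\langle F_0,\psi\rangle:=\langle F,\psi|_{D^c}\rangle \quad \forall\,\psi\in\Hone[\bbR^2].
\]
Since restriction is a contraction, $\sTN{F_0}\le\sTN{F}$. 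Lemma~\ref{lem:upml-whole} then gives a unique $w\in\Hone[\bbR^2]$ with $\mathscr{A}_{\bbR^2}(w,\psi)=\langle F_0,\psi\rangle$ for all $\psi\in\Hone[\bbR^2]$, and
\[
\sTN{w}_{\Hone[\bbR^2]}\le Ck\sTN{F}.
\]
In particular, $\mathscr{A}_{D^c}(w,v)=\langle F,v\rangle$ for every $v\in H^1_\Gamma(D^c)$ extended by zero. However, $w$ need not vanish on $\Gamma$.

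\textbf{Step 2: boundary correction.} Apply Lemma~\ref{lem:phi-Dc} with $\psi:=w|_{\Omega_l}$. This yields $\phi_1\in\Hone[D^c]$ with $\phi_1=w$ on $\Gamma$ and $\mathscr{A}_{D^c}(\phi_1,v)=0$ for all $v\in H^1_\Gamma(D^c)$. Then $w-\phi_1\in H^1_\Gamma(D^c)$ and
\[
\mathscr{A}_{D^c}(w-\phi_1,v)=\langle F,v\rangle=\mathscr{A}_{D^c}(\phi,v).
\]
Uniqueness for $\mathscr{A}_{D^c}$ gives $\phi=w-\phi_1$. That uniqueness follows from the well-posedness of the UPML exterior problem, or from a Rellich argument via the radiation condition, in the same way that uniqueness on $\bbR^2$ was quoted from \cite{KimPasciak2010}.

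\textbf{Step 3: estimating the correction, the main obstacle.} By \eqref{eq:phi-est},
\[
\TNHone[D^c]{\phi_1}\le C\TNHone[\Omega_l]{w}+Ck\sup_{v\in\zbHone[\Omega_l]}\frac{|(\nabla w,\nabla v)_{\Omega_l}-k^2(w,v)_{\Omega_l}|}{\TNHone[\Omega_l]{v}}.
\]
The first term is $\le Ck\sTN{F}$, so the difficulty is the second term, which carries an extra factor $k$. The key observation is that $\bbA=I$ and $J=1$ in $B_l\supset\Omega_l$. Hence, for $v\in\zbHone[\Omega_l]$ extended by zero to $\bbR^2$,
\[
(\nabla w,\nabla v)_{\Omega_l}-k^2(w,v)_{\Omega_l}=\mathscr{A}_{\bbR^2}(w,v)=\langle F,v\rangle .
\]
Since $v$ also lies in $H^1_\Gamma(D^c)$, this quantity is bounded by $\sTN{F}\,\TNHone[\Omega_l]{v}$, so the supremum is at most $\sTN{F}$. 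The extra factor $k$ therefore combines with $\sTN{F}$ to give $Ck\sTN{F}$, exactly the desired order and with no loss.

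Combining the three steps, $\TNHone[D^c]{\phi}\le\sTN{w}+\TNHone[D^c]{\phi_1}\le Ck\sTN{F}$. Setting $\mu_2=1/C$ finishes the proof. The constant does not depend on $d_1,d_2$ because none of the steps involves truncation.

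What needs care is checking that Lemma~\ref{lem:phi-Dc} applies with $\psi=w|_{\Omega_l}$, and that the zero extension of $v\in\zbHone[\Omega_l]$ is an admissible test function in both formulations. The latter holds because such a $v$ vanishes on $\partial B_l$ and on $\Gamma$.
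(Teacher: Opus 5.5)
\textbf{The approach matches the paper, but two steps fail as written.} Your ingredients are the paper's: a whole-space solve via Lemma~\ref{lem:upml-whole}, the boundary correction of Lemma~\ref{lem:phi-Dc}, and the observation that $\bbA=I$, $J=1$ in $\Omega_l$. That observation makes the $k$-weighted residual in \eqref{eq:phi-est} equal to $\langle F,v\rangle$, so it costs only $Ck\sTN{F}$; your Step~3 is correct.

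\textbf{Step 1: the extension of $F$.} For $\psi\in H^1(\bbR^2)$, the restriction $\psi|_{D^c}$ does not vanish on $\Gamma$, so it is not in the domain of $F\in H^1_\Gamma(D^c)'$. Suppose you read $\langle F_0,\psi\rangle$ as $\mathscr{A}_{D^c}(\phi,\psi|_{D^c})$. Then $F_0$ sees the conormal derivative of $\phi$ on $\Gamma$, and its norm is controlled only by continuity, $C\sTN{\phi}_{H^1(D^c)}$. Your final bound then becomes the circular $\sTN{\phi}\le Ck\sTN{\phi}$. An extension operator for \emph{functions} does not help here. You need an extension of the \emph{functional} that agrees with $F$ on the zero-extended copy of $H^1_\Gamma(D^c)$ and has the same norm. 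Hahn--Banach (as in the paper) gives this, as does the orthogonal projection onto that closed subspace in the $k$-weighted inner product.

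\textbf{Step 2: uniqueness for $\mathscr{A}_{D^c}$.} You close by uniqueness, but it is not available at this point; it is a consequence of the lemma being proved. The suggested Rellich argument is not immediate either, because an $H^1$ solution of the UPML equation carries no radiation condition. You would first have to show that its restriction to $\Omega_l$ extends to a radiating Helmholtz solution, e.g.\ via a cut-off, whole-space uniqueness, and the kernel $\wt{G}$.

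The paper avoids uniqueness altogether by duality. Take $F$ to be the Riesz representer, $\langle F,v\rangle=(\nabla\bar\phi,\nabla v)_{D^c}+k^2(\bar\phi,v)_{D^c}$. Then $\langle F,\phi\rangle=\sTN{\phi}_{H^1(D^c)}^2$ and $\sTN{F}=\sTN{\phi}_{H^1(D^c)}$. Your Steps~1--3, with the Hahn--Banach extension, produce $\phi_1\in H^1_\Gamma(D^c)$ with $\mathscr{A}_{D^c}(\phi_1,\cdot)=F$ and $\sTN{\phi_1}_{H^1(D^c)}\le C_0k\sTN{\phi}_{H^1(D^c)}$. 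The symmetry of $\mathscr{A}_{D^c}$ then gives
\[
\sup_{0\ne v\in H^1_\Gamma(D^c)}\frac{|\mathscr{A}_{D^c}(\phi,v)|}{\sTN{v}_{H^1(D^c)}}
\ge\frac{|\mathscr{A}_{D^c}(\phi_1,\phi)|}{\sTN{\phi_1}_{H^1(D^c)}}
=\frac{\sTN{\phi}_{H^1(D^c)}^2}{\sTN{\phi_1}_{H^1(D^c)}}
\ge\frac{\sTN{\phi}_{H^1(D^c)}}{C_0k}.
\]
Only existence is used. The same computation also yields the uniqueness you wanted, so your route can be repaired this way. Take $z$ in the kernel and let $\phi_z$ be built from the Riesz representer of $\bar z$; then $\sTN{z}^2=\mathscr{A}_{D^c}(\phi_z,z)=\mathscr{A}_{D^c}(z,\phi_z)=0$.
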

\begin{proof}
Given $\phi\in H^1_{\Gamma}(D^c)$, we define the functional $F\in \big(H_{\Gamma}^{1}(D^c)\big)'$ by 
\[
\langle F, v\rangle = (\nabla\bar\phi,\nabla v)_{D^c} + k^2(\bar\phi,v)_{D^c} \quad 
\forall\,v\in H_{\Gamma}^{1}(D^c).
\]
It is clear that $\TNHone[D^c]{\phi}^2 = \langle F, \phi\rangle$ and $\sTN{F}_{H_{\Gamma}^{1}(D^c)'}=\TNHone[D^c]{\phi}$.

By the Hahn-Banach theorem, we can extend $F$ to a functional $F_1\in H^{1}(\bbR^2)'$ such that
\[
F_1=F\;\; \hbox{on}\; H_{\Gamma}^{1}(D^c),\quad
\sTN{F_1}_{H^{1}(\bbR^2)'} =\sTN{F}_{H_{\Gamma}^{1}(D^c)'}
=\TNHone[D^c]{\phi}.
\]
By Lemma~\ref{lem:upml-whole}, there exists a unique $\psi\in H^1(\bbR^2)$ which satisfies
\begin{align}
&\mathscr{A}_{\bbR^2}(\psi,v)=\langle F_1,v\rangle	
    \quad	\forall\,v\in H^1(\bbR^2), \label{eq:phi1} \\
&\TNHone[\bbR^2]{\psi}\le Ck\sTN{F_1}_{H^{1}(\bbR^2)'}
=	Ck\TNHone[D^c]{\phi}.  \label{ieq:phi1}
\end{align}
Moreover, by Lemma~\ref{lem:phi-Dc}, there exists a $w\in H^1(D^c)$ which satisfies $w=\psi$ on $\Gamma$ and
\begin{align*}
&\mathscr{A}_{D^c}(w,v) = 0 \quad \forall\, v\in H^1_{\Gamma}(D^c), \\
&\TNHone[D^c]{w} \le C \TNHone[\Omega_l]{\psi}
+ Ck\sTN{F_1}_{H^{1}(\bbR^2)'} 
\le Ck \TNHone[D^c]{\phi}.
\end{align*}
It is clear that $\phi_1 :=\psi-w\in H^1_\Gamma(D^c)$ and 
\begin{align*}
&\mathscr{A}_{D^c}(\phi_1,v) = \langle F_1,v\rangle =\langle F,v\rangle		 
    \quad \forall\, v\in H^1_{\Gamma}(D^c), \\
&\TNHone[D^c]{\phi_1} \le \TNHone[D^c]{\psi} + \TNHone[D^c]{w}
\le C_0 k \TNHone[D^c]{\phi},
\end{align*}
where $C_0$ is a constant independent of $k$.
Since $\mathscr{A}_{D^c}$ is symmetric, we have
\begin{align*}
\sup_{0\ne v\in H^1_{\Gamma}(D^c)} \frac{|\mathscr{A}_{D^c}(\phi,v)|}{\TNHone[D^c]{v}}
\ge \frac{|\mathscr{A}_{D^c}(\phi,\phi_1)|}{\TNHone[D^c]{\phi_1}}
= \frac{\langle F,\phi\rangle}{\TNHone[D^c]{\phi_1}}
\ge \frac{\TNHone[D^c]{\phi}^2}{C_0 k \TNHone[D^c]{\phi}}
\ge \frac{\TNHone[D^c]{\phi}}{C_0 k}.
\end{align*}
The proof is finished by setting $\mu_2:=1/C_0$.
\end{proof}

\begin{theorem}  \label{thm:infsup-OmL}
There exists a constant $\mu_3>0$ independent of $d_1$, $d_2$, and $k$ such that
\begin{align}\label{eq:infsup-OmL}
\sup_{0\ne v\in H^1_0(\Omega_L)} 
\frac{|\mathscr{A}_{\Omega_L}(\phi,v)|}{\TNHone[\Omega_L]{v}}
\ge \frac{\mu_3}{k} \TNHone[\Omega_L]{\phi} \quad \forall\,\phi\in H^1_0(\Omega_L).
\end{align}  
\end{theorem}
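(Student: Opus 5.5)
The plan is to mimic the proof of Theorem~\ref{thm:infsup-BL}, with the whole-space result Lemma~\ref{lem:upml-whole} replaced by the exterior inf-sup condition Lemma~\ref{lem:infsup-Dc}.

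\textbf{Step 1 (reduction).} Fix $\phi\in H^1_0(\Omega_L)$ and define $F\in H^{-1}(\Omega_L)$ by $\langle F,\psi\rangle:=\mathscr{A}_{\Omega_L}(\phi,\psi)$ for $\psi\in H^1_0(\Omega_L)$. It suffices to prove
\[
\TNHone[\Omega_L]{\phi}\le Ck\sTN{F}_{H^{-1}(\Omega_L)}.
\]

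\textbf{Step 2 (reflection).} Since $D\subset B_{l_0}(\mathbf 0)\subset B_l$, the reflections act only in the PML region $B_L\setminus\ol{B_l}$ and never touch the obstacle. Extend $\phi$ by odd reflections across $x_j=\pm L_j$ to $\tilde\phi\in H^1_\Gamma(\wt B_L\setminus\ol D)$, which vanishes on $\Gamma$. Define $\tilde F$ on $H^1_0(\wt B_L\setminus\ol D)$ by $\langle\tilde F,\psi\rangle:=\mathscr{A}_{\wt B_L\setminus\ol D}(\tilde\phi,\psi)$. Build the auxiliary functions $\psi_1,\hat\psi$ exactly as before; both are supported away from $B_l$ or vanish near $D$. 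This gives $\psi-\psi_1-\hat\psi\in H^1_0(\Omega_L)$ and hence
\[
\sTN{\tilde F}\le C\sTN{F}_{H^{-1}(\Omega_L)}.
\]
Extend $\tilde F$ by Hahn--Banach to $F_1\in (H^1_\Gamma(D^c))'$ with the same norm. Lemma~\ref{lem:infsup-Dc}, together with the symmetry of $\mathscr{A}_{D^c}$ and the Lax--Milgram/Fredholm well-posedness on $D^c$ (uniqueness follows from the inf-sup bound itself), then produces $w\in H^1_\Gamma(D^c)$ with $\mathscr{A}_{D^c}(w,v)=\langle F_1,v\rangle$ and
\[
\TNHone[D^c]{w}\le Ck\sTN{F}_{H^{-1}(\Omega_L)}.
\]

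\textbf{Step 3 (representation of $z$).} Set $z:=w-\tilde\phi$. Then $\Cl z=0$ in $\wt B_L\setminus\ol D$ and $z=0$ on $\Gamma$. Take the cutoff $\chi$ with $\chi\equiv1$ on $\wh B_L$ and $\chi\equiv0$ outside $\wt B_L$, so that $\chi z$ is supported away from infinity. The difficulty is the obstacle: $\widetilde G$ does not vanish on $\Gamma$, so Green's formula leaves a boundary term on $\Gamma$. I would remove it by exploiting uniqueness on $D^c$ instead of the free-space kernel, using the exterior solution operator as follows. Note that $\chi z\in H^1_\Gamma(D^c)$ satisfies $\mathscr{A}_{D^c}(\chi z,v)=\langle\zeta,v\rangle$. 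Here $\zeta$ is the commutator term, supported in $\wt B_L\setminus\wh B_L$, with $\|\zeta\|_{L^2}\le C\TNHone{z}$; the identity must be taken with a sign-appropriate form involving $\nabla\chi\cdot\bbA\nabla z$, so $\zeta$ is an $L^2$ function plus a divergence term, both supported far away.

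\textbf{Step 4 (smallness of $z$ on $\Omega_L$; main obstacle).} It remains to show that $\chi z$ restricted to $\Omega_L$ is exponentially small. This is the main obstacle, since Lemma~\ref{lem:infsup-Dc} alone only gives $\TNHone{\chi z}\le Ck\|\zeta\|$, which is not small. I would first write $\chi z=z_0+z_1$. Here $z_0:=-\int \widetilde G(\cdot,\By)\zeta(\By)\,\D\By$ is the free-space UPML potential. As in Theorem~\ref{thm:infsup-BL}, $z_0$ has $H^1(\Omega_L\cup B_{l})$ norm bounded by $Cd^{1/2}e^{-\sigma_0kd/10}\|\zeta\|$, since $|\Bx-\By|>R$; on $\Gamma$ both $z_0$ and its trace are exponentially small. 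The corrector $z_1$ solves the homogeneous problem on $D^c$ with Dirichlet data $-z_0|_\Gamma$, so Lemma~\ref{lem:phi-Dc}, applied with $\psi=-\chi_1 z_0$ (cut off inside $B_{l_0}$), gives
\[
\TNHone[D^c]{z_1}\le C\TNHone[\Omega_l]{z_0}+Ck\sTN{(\Delta+k^2)z_0}_{H^{-1}(\Omega_l)},
\]
and both terms are exponentially small because $z_0$ solves the Helmholtz equation in $B_l$. Combining, $\TNHone[\Omega_L]{z}\le Ck\,d^{1/2}e^{-c\sigma_0kd}\big(\TNHone{w}+\TNHone[\Omega_L]{\phi}\big)$. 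The extra factor $k$ is absorbed by the exponential when $\sigma_0 d$ is large. Hence $\TNHone[\Omega_L]{\phi}\le(1+\epsilon)\TNHone{w}+\epsilon\TNHone[\Omega_L]{\phi}$ with $\epsilon<1/2$, and the bound of Step 1 follows, giving $\mu_3$ independent of $d_1$, $d_2$ and $k$.
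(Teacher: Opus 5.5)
Your proposal is correct and follows the route the paper indicates. The paper omits the proof as parallel to Theorem~\ref{thm:infsup-BL}, with Lemma~\ref{lem:infsup-Dc} replacing Lemma~\ref{lem:upml-whole}; your steps are the same: reflection in the layer, Hahn--Banach extension, an exterior solve on $D^c$, and exponential smallness of $z=w-\tilde\phi$ on $\Omega_L$.

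Your Step~4 handles a point that a verbatim transcription would miss: the Green's-formula term on $\Gamma$, which is not small because $z=0$ there but $\partial_{\Bn} z\neq 0$. Your splitting $\chi z=z_0+z_1$ closes this correctly. Here $z_0$ is the free-space UPML potential, and $z_1$ is an exterior corrector bounded by Lemma~\ref{lem:phi-Dc} and identified through the uniqueness given by Lemma~\ref{lem:infsup-Dc}.

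One small slip: with $\psi=-\chi_1 z_0$, the residual in Lemma~\ref{lem:phi-Dc} is the harmless commutator $(\Delta+k^2)(\chi_1 z_0)$, not $(\Delta+k^2)z_0$. If you take $\psi=-z_0|_{\Omega_l}$ instead, that residual vanishes outright.
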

\begin{proof}
Based on Lemma~\ref{lem:infsup-Dc}, the proof is parallel to that of Theorem~\ref{thm:infsup-BL} and is omitted here.
\end{proof}

\subsection{Error estimate of the PML solution}

To end this section, we prove the exponential convergence of $\hat{u}$ to $\tilde{u}$ in $\Omega_L$ as $\bar\sigma:=\sigma_0 \min\{d_1,d_2\}\to +\infty$. The theory also covers the pure source problem with $D=\emptyset$.

\begin{theorem}\label{thm:upml-error}
Let $\tilde{u}$ be the stretched solution in \eqref{eq:tu} and $\hat{u}$ be the solution of the PML problem \eqref{eq:weak-hu}. Suppose $d_j\ge R + l_j$ for $j=1,2$. There exists a constant $C>0$ independent of $d_1$, $d_2$, and $k$ such that
\begin{equation}\label{eq:trunc-err-final-klogk}
\sTN{\tilde u-\hat u}_{H^1(\Omega_L)} 
\le Ck^{2} e^{-\frac{1}{10}k\bar\sigma}\NLtwo[\Omega_l]{f}.
\end{equation}
\end{theorem}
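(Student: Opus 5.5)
Set $e:=\tilde u-\hat u$ on $\Omega_L$. Since $\tilde u$ satisfies $\Cl\tilde u=f$ in $D^c$ and $\tilde u=0$ on $\Gamma$, while $\hat u$ solves \eqref{eq:weak-hu}, the error satisfies $\mathscr{A}_{\Omega_L}(e,v)=0$ for all $v\in\zbHone[\Omega_L]$. It has zero trace on $\Gamma$, and on $\Gamma_L$ its trace equals $\tilde u|_{\Gamma_L}$, which is exponentially small. The plan is to subtract a lifting of this boundary datum supported near $\Gamma_L$, then apply the inf-sup estimate of Theorem~\ref{thm:infsup-OmL} to the remainder.

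\emph{Lifting.} Take a cutoff $\theta\in C^\infty(\bbR^2)$ with $\theta\equiv1$ in a neighbourhood of $\Gamma_L$ and $\theta\equiv 0$ on $B_{L'}$, where $B_{L'}$ is a rectangle lying strictly between $\ol{B_l}$ and $\Gamma_L$, at distance of order $d$ from both. Keep $|\nabla\theta|$ bounded independently of $k$, which is possible because $d_j\ge R+l_j$. Set $e_0:=e-\theta\tilde u\in\zbHone[\Omega_L]$. For any $v\in\zbHone[\Omega_L]$ we have $\mathscr{A}_{\Omega_L}(e_0,v)=-\mathscr{A}_{\Omega_L}(\theta\tilde u,v)$, and by continuity of $\mathscr{A}$ in the weighted norm this is at most $C\TNHone[\Omega_L\setminus B_{L'}]{\tilde u}\,\TNHone[\Omega_L]{v}$. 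Theorem~\ref{thm:infsup-OmL} then gives
\[
\TNHone[\Omega_L]{e}\le \TNHone[\Omega_L]{\theta\tilde u}+\TNHone[\Omega_L]{e_0}\le C(1+k)\,\TNHone[\Omega_L\setminus B_{L'}]{\tilde u}.
\]

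\emph{Exponential smallness of $\tilde u$ near $\Gamma_L$ (main step).} By \eqref{eq:tu}, $\tilde u$ is the sum of a volume potential over $\Omega_l$ with density $f$ and a single layer on $\Gamma$ with density $\partial_{\Bn}u$. For $\Bx\notin B_{L'}$ and $\By\in B_l$, the separation satisfies $|\Bx-\By|\gtrsim d\ge R$. In that regime \eqref{eq:Imrho-lower} gives $\Im\rho\ge c\,\sigma_0 d$, in the same way as in Lemma~\ref{lem:Imrho-large-sep}. Combining this with Lemma~\ref{lem:decay-stretchedG}, $|\nabla^j_{\Bx}G(\tilde\Bx,\By)|\le Ck^{j}e^{-c k\bar\sigma}$ and also $\le Ck^j e^{-\frac18\sigma_0k|\Bx-\By|}$, which makes the unbounded strip region integrable. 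The main obstacle is tracking the exponent so that the final rate is $\tfrac1{10}k\bar\sigma$. One has to choose $L'$ (for instance $L'_j=L_j-\epsilon d_j$) so that the decay constant from \eqref{eq:Imrho-lower}, the loss from the Cauchy--Schwarz splitting, and the polynomial prefactors together still dominate $e^{-\frac1{10}k\bar\sigma}$. Here the assumption $d_j\ge R+l_j$ ensures that $(x_j-y_j)\int_{y_j}^{x_j}\sigma_j\ge c\sigma_0 d_j|\Bx-\By|$ on the relevant set.

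\emph{Bounding the densities and collecting factors.} By Cauchy--Schwarz, the volume part contributes $Ck^{j}e^{-ck\bar\sigma}\NLtwo[\Omega_l]{f}$ to $\N{\nabla^j\tilde u}_{L^2(\Omega_L\setminus B_{L'})}$. For the boundary part we need $\N{\partial_{\Bn}u}_{L^2(\Gamma)}$. The trace inequality together with \eqref{eq:stab-u} gives $\N{\nabla u}_{L^2(\Gamma)}\le C(\SNHone[\Omega_l]{u}\,\SN{u}_{H^2(\Omega_l)})^{1/2}+C\SNHone[\Omega_l]{u}\le Ck^{1/2}\NLtwo[\Omega_l]{f}$. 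Altogether $\TNHone[\Omega_L\setminus B_{L'}]{\tilde u}\le Ck^{1+1/2}e^{-ck\bar\sigma}\NLtwo[\Omega_l]{f}$. Multiplying by the factor $Ck$ from the inf-sup step and absorbing the leftover $k^{1/2}$ and $d$-dependent prefactors into a slight reduction of the exponent (using $k\ge1$ and $\sigma_0 d$ large) yields \eqref{eq:trunc-err-final-klogk}. Finally, since $\tilde u=u$ in $\Omega_l$, this also gives the error bound against the physical solution there.
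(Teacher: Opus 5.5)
Your proof is correct and follows the same outline as the paper's. It shows exponential smallness of $\tilde u$ near $\Gamma_L$ from \eqref{eq:tu} and \eqref{eq:dG-exp}, takes the cut-off function $(1-\theta)\tilde u-\hat u\in\zbHone[\Omega_L]$, applies Theorem~\ref{thm:infsup-OmL}, and finishes with the triangle inequality.

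The one real difference is how the residual is bounded. The paper uses $\mathscr{A}_{\Omega_L}(\tilde u,\chi v)=(f,v)$ to reduce $\mathscr{A}_{\Omega_L}(\chi\tilde u-\hat u,v)$ to the commutator $\int(\tilde u\,\bbA\nabla\chi\cdot\nabla v-v\,\bbA\nabla\tilde u\cdot\nabla\chi)$. This term is one order lower in $k$ and exactly cancels the factor $k$ of the inf-sup constant. The paper therefore reaches $Ck^2e^{-\frac1{10}k\bar\sigma}$ directly, on a layer of fixed width $l_j/2$ next to $\Gamma_L$, with no absorption step. Your plain continuity bound loses that factor $k$, so you have to pay for it with a margin in the exponent.

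That margin does exist, so the bookkeeping you flagged as the main obstacle closes:
\begin{itemize}
\item Take $L'_j=L_j-\epsilon d_j$ with $\epsilon$ small enough that $(1-\epsilon)(R+l_j)>R$.
\item Lemma~\ref{lem:Imrho-large-sep} then gives decay $e^{-\frac18(1-\epsilon)k\bar\sigma}$.
\item Since $\frac18(1-\epsilon)>\frac1{10}$, the surplus absorbs any fixed power of $k$, uniformly in $k\ge1$, because $\bar\sigma$ is bounded below.
\end{itemize}

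Your bound $\|\partial_{\Bn}u\|_{L^2(\Gamma)}\le Ck^{1/2}\NLtwo[\Omega_l]{f}$ is also valid. It is a slightly sharper replacement for the paper's $H^{-1/2}(\Gamma)$ estimate paired with \eqref{eq:dxdyG-exp}.
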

\begin{proof}
We shall use the conventional norm on $H^{1/2}(\Gamma)$:
\begin{align*}
\N{v}_{H^{1/2}(\Gamma)} :=\,& \bigg(\NLtwo[\Gamma]{v}^2 
+\int_{\Gamma}\int_{\Gamma}\frac{|v(\Bx)-v(\By)|^2}{|\Bx-\By|^2}
\D\By \D\Bx\bigg)^{1/2}.
\end{align*}
Its dual space is $H^{-1/2}(\Gamma)$. The trace inequality on $\Hdiv[\Omega_l]$ shows
\begin{align*}
\N{\partial_\Bn u}_{H^{-1/2}(\Gamma)}  
\le\NLtwo[\Omega_l]{\Delta u} + \SNHone[\Omega_l]{u} 
\le k^2\NLtwo[\Omega_l]{u} +\NLtwo[\Omega_l]{f} + \SNHone[\Omega_l]{u} 
\le C k\NLtwo[\Omega_l]{f}.
\end{align*}
Define $Y_L:= \big\{\Bx\in\bbR^2: |x_j| < L_j-l_j/2, \; j=1,2\big\}$. By \eqref{eq:tu-1}, we have
\begin{align*}
|\nabla^j\tilde{u}(\Bx)| \le\,& \NLtwo[\Omega_l]{f}
\big\|\nabla^j\wt G(\Bx,\cdot)\big\|_{\Ltwo[\Omega_l]}
+\N{\partial_\Bn u}_{H^{-1/2}(\Gamma)} 
\big\|\nabla^j\wt G(\Bx,\cdot)\big\|_{H^{1/2}(\Gamma)} \\
\le\,& \NLtwo[\Omega_l]{f}
\big\|\nabla^j\wt G(\Bx,\cdot)\big\|_{L^2(\Omega_l)}
+C k\NLtwo[\Omega_l]{f}
\big\|\nabla^j\wt G(\Bx,\cdot)\big\|_{H^{1}(\Omega_l)}, 
\end{align*}
for any $\Bx\in B_L\backslash Y_L$ and $j=0,1$. Using \eqref{eq:dG-exp}, we deduce that
\begin{align*} 
&\int_{B_L\backslash Y_L}\int_{B_l} \big|\nabla^j_{\Bx} 
\wt G(\Bx,\By)\big|^2 \D\By\D\Bx
\le Ck^{2j-1} \int_{B_l} \int_{B_L\backslash Y_L}
\frac{e^{-\frac{1}{4}\sigma_0k|\Bx-\By|}}{|\Bx-\By|} \D\Bx\D\By 
\le C k^{2j-2}  e^{-\frac{1}{5}k\bar\sigma},\\
&\int_{B_L\backslash Y_L}\int_{B_l} \big|\nabla^j_{\Bx}\nabla_{\By} 
\wt G(\Bx,\By)\big|^2 \D\By\D\Bx
\le C k^{2j}  e^{-\frac{1}{5}k\bar\sigma}.
\end{align*}
We conclude that
$\|\nabla^j\tilde{u}\|_{L^2(B_L\backslash Y_L)} 
\le Ck^{j+1}e^{-\frac{1}{10}k\bar\sigma}\NLtwo[\Omega_l]{f}$ for $j=0,1$.

Let $\chi\in C^\infty(\bbR^2)$ satisfy $\chi\ge 0$, $\chi\equiv 0$ in $\bbR^2\backslash B_L$, and $\chi\equiv 1$ in $ Y_L$. It is clear that $\phi:=\chi\tilde u-\hat u \in\zbHone[\Omega_L]$. Extend $\phi$ by zero to the exterior of $B_L$ and denote the extension still by $\phi$. Then \eqref{eq:weak-tu} and \eqref{eq:weak-hu} show
\begin{align*}
\mathscr{A}_{\Omega_L}(\phi,v) = \int_{B_L\backslash Y_L}
\big(\tilde{u}\bbA\nabla\chi\cdot\nabla v-v\bbA\nabla\tilde{u}\cdot\nabla\chi\big).
\end{align*}
Using the inf-sup condition \eqref{eq:infsup-OmL} and the Cauchy-Schwarz inequality, we obtain
\begin{align*}
\TNHone[\Omega_L]{\phi} \le Ck \sup_{0\ne v\in \zbHone[\Omega_L]}
\frac{|\mathscr{A}_{\Omega_L}(\phi,v)|}{\TNHone[\Omega_L]{v}}
\le C\TNHone[B_L\backslash Y_L]{\tilde u}.
\end{align*}
Applying triangle inequality to the left-hand side yields
\begin{align*}
\TNHone[\Omega_L]{\tilde{u}-\hat{u}} 
\le C \TNHone[B_L\backslash Y_L]{\tilde u}
\le Ck^{2} e^{-\frac{1}{10}k\bar\sigma}\NLtwo[\Omega_l]{f}.
\end{align*}
The proof is finished.
\end{proof}


\section{The CIP finite element method}
\label{sec:CIP-fem}

In this section, we prove a preasymptotic error estimate for the linear CIP-FEM approximation of the PML problem \eqref{eq:weak-hu}. We shall adopt the duality technique developed in \cite{LiWu2019}. The key ingredients are the inf-sup condition \eqref{eq:infsup-OmL} and the piecewise $H^{1+s}$-regularity (for any $0<s<1$) of the solution to the PML problem: for $\xi\in\Ltwo[\Omega_L]$,
\begin{align}\label{eq:problem-Hs}
-\nabla\cdot(\bbA\nabla w) -k^2 J w = \xi \;\;\;\hbox{in}\;\;\Omega_L,\quad 
w=0 \;\;\;\hbox{on}\;\;\partial\Omega_L.
\end{align}

\subsection{Regularity result of the PML problem}
\label{app:sec:H1s-regularity}

First we study the regularity of the solution to the PML problem in $\Omega_L$. Let \(\Cp_L\) be the set of connected components of 
$\Omega_L\backslash \{\Bx: x_1=\pm l_1\;\hbox{or}\; x_2=\pm l_2\}$. Clearly $\Omega_l\in\Cp_L$. The set of vertices of $\Cp_L$ is denoted by $\Cv_L$, which consists of the four corners of $B_l$, the four corners of $B_L$, and the intersection points of $x_j=\pm l_j$ with $\Gamma_L$ for $j=1,2$. The PML coefficients $\bbA$ and $J$ are discontinuous across the interfaces $x_j=\pm l_j$, $j=1,2$, and cause a loss of global regularity for the solution. 

For $t>0$, we define the broken Sobolev space
\[
H^{t}(\Cp_L):=\big\{v\in\Ltwo[\Omega_L]: v|_Q\in H^{t}(Q)\;\; 
    \forall\,Q\in\Cp_L\big\},
\]
which is equipped with the norm and semi-norm
\[
\N{v}_{H^{t}(\Cp_L)} :=\bigg(\sum_{Q\in\Cp_L} \N{v}^2_{H^t(Q)}\bigg)^{1/2}, \quad 
\SN{v}_{H^{t}(\Cp_L)} :=\bigg(\sum_{Q\in\Cp_L} \SN{v}^2_{H^t(Q)}\bigg)^{1/2}.
\]

We record the local pencil property of the PML coefficients \cite{Nicaise1994a}. Near a vertex in $\Cv_L$, the possible corner singularities of an elliptic transmission problem are generated by the homogeneous principal problem. The lower-order term does not influence the degree of singularity of the solution. Therefore, 
it suffices to study the homogeneous principal transmission problem
\begin{align} \label{eq:trans}
-\nabla\cdot(\bbA\nabla w_0) = \xi \;\;\;
    \hbox{in}\;\;\Omega_L, \quad
w_0 =0 \;\;\; \hbox{on}\;\;\partial\Omega_L. 
\end{align}
The regularity of $w_0$ is determined by eigenvalues of local Mellin pencils near vertices in $\Cv_L$. From \cite{Nicaise1994a}, the singular part of $w_0$ near a vertex $\Bp\in\Cv_L$ has the form
\begin{align} \label{eq:ws-mellin}
w_\Bp(r,\theta) = r^\lambda \phi(\lambda;\theta), \quad \lambda\in\bbC,
\end{align}
where $r^\lambda:=\exp(\lambda\ln r)$, \(r=|\Bx-\Bp|>0\), and $\theta$ is the angular variable. The admissible values of \(\lambda\) are precisely eigenvalues of the local
Mellin pencil at \(\Bp\).

\begin{lemma}\label{lem:mellin}
Suppose $\lambda$ is an eigenvalue of the local Mellin pencil of \eqref{eq:trans} at $\Bp\in\Cv_L$. Then $\Re\lambda\notin (0,1)$. 


\end{lemma}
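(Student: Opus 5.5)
The plan is to compute the local Mellin pencils explicitly. The key observation is that, inside each subregion of $\Cp_L$ meeting a vertex $\Bp$, the principal operator becomes the ordinary Laplacian in local complex-stretched coordinates, and the transmission conditions become the classical ones. Fix $\Bp\in\Cv_L$ and translate it to the origin. Near $\Bp$, each adjacent subregion is a quadrant $Q$ on which $\alpha_1\equiv a_1^Q$ and $\alpha_2\equiv a_2^Q$ are constants in $\{1,1+\Vi\sigma_0\}$. Set $\tilde x_j:=a_j^Q x_j$ on $Q$; this is continuous across the interfaces, since $\alpha_1$ does not jump across $x_2=$const and $\alpha_2$ does not jump across $x_1=$const. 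On $Q$ we have $\nabla\cdot(\bbA\nabla w)=J\,(\partial_{\tilde x_1}^2+\partial_{\tilde x_2}^2)w$.

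Across a vertical interface $x_1=0$, $\alpha_2$ is common to both sides. The flux condition, continuity of $(\alpha_2/\alpha_1)\partial_1w$, is therefore equivalent to continuity of $\alpha_1^{-1}\partial_1 w=\partial_{\tilde x_1}w$. The horizontal interfaces are handled in the same way. Hence, in the complex variables $z:=\tilde x_1+\Vi\tilde x_2$ and $z':=\tilde x_1-\Vi\tilde x_2$, a homogeneous solution $r^\lambda\phi(\lambda;\theta)$ on each sector is a combination $A_Q z^\lambda+B_Q z'^\lambda$. For $\lambda\ne0$ these two functions span the two-dimensional solution space of the sector ODE for $\phi$; for $\lambda=0$ one adds the logarithmic pair. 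Continuity of $w$ and of the normal $\tilde x$-derivative across an interface line forces $(A_Q,B_Q)$ to be the same on both sides, once the branches of $z^\lambda$ and $z'^\lambda$ are continued analytically across the interface. So a single pair $(A,B)$ describes $w$ on all the sectors around $\Bp$.

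Next I compute the winding of the closed curve $\theta\mapsto z(\theta)=a_1(\theta)\cos\theta+\Vi a_2(\theta)\sin\theta$, and likewise for $z'$, around $0$. I will show that it equals $\pm1$ by deforming $\sigma_0\to0$. This is legitimate because $z$ never vanishes along the deformation. Indeed $\Re z=\cos\theta-\sigma_2\sin\theta$ and $\Im z=\sigma_1\cos\theta+\sin\theta$ up to signs set by the quadrant, and both vanish only if $\cos\theta=\sin\theta=0$; equivalently, one can use a lower bound of the type in Lemma~\ref{lem:rho-cmpr}. The vertices then split into three cases.
\begin{enumerate}[(i)]
\item At the four corners of $B_l$ (interior crosses with full angle $2\pi$), single-valuedness of $w$ requires $e^{2\pi\Vi\lambda}=1$ unless $A=B=0$. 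Hence $\lambda\in\bbZ$.
\item At the intersection points of $x_j=\pm l_j$ with $\Gamma_L$ (half-planes with one interface), the Dirichlet condition on the two boundary rays, whose images in $z$ are rays $\arg z=\theta_0$ and $\arg z=\theta_0+\pi$, gives a $2\times2$ system in $(A,B)$. Its determinant is a multiple of $\sin(\pi\lambda)$, so $\lambda\in\bbZ$.
\item At the corners of $B_L$ (a single quadrant with constant coefficients and Dirichlet data on both sides), the images of the two rays are the half-lines along $a_1$ and $\Vi a_2$. The same computation gives a determinant proportional to $\sin(\lambda\vartheta)$, where $\vartheta=\arg(\Vi a_2/a_1)$ with $|\vartheta|$ arising from the winding argument. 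One must then check that $\Re\lambda\notin(0,1)$ for the roots $\lambda=n\pi/\vartheta$; since $\vartheta$ is real and lies in $(0,\pi)$, these roots satisfy $\Re\lambda\ge\pi/\vartheta>1$.
\end{enumerate}
In every case, eigenvalues with $0<\Re\lambda<1$ are excluded.

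I expect the main obstacle to be the rigorous bookkeeping of branches. This means defining $z^\lambda$ and $z'^\lambda$ consistently across the sectors, confirming that the winding numbers are exactly $\pm1$ and not $0$, and handling the degenerate cases $\lambda=0$ and $z'=\bar z$-type coincidences where the two basis functions fail to be independent. I would first settle this by the homotopy $\sigma_0\mapsto t\sigma_0$, $t\in[0,1]$. Along this homotopy the curves avoid $0$, so their winding equals that of the Euclidean case $t=0$, and the integer constraints on $\lambda$ follow from continuity of the monodromy factor $e^{2\pi\Vi\lambda\cdot(\text{winding})}$. The corner case (iii) additionally needs the explicit location of $\arg(a_2/a_1)$ to verify the bound $\Re\lambda>1$ there.
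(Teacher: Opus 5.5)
Your proposal follows the paper's proof. Stretched local coordinates turn each sector into a Laplace problem with basis $(\xi_1\pm\Vi\xi_2)^\lambda$, the transmission conditions force a single coefficient pair around each vertex, and the three vertex types give $\lambda\in\bbZ$, $\lambda\in\bbZ$ and $\lambda\in 2\bbZ$. Your homotopy argument for the winding numbers also makes explicit something the paper takes for granted when it imposes periodicity between $\theta=\pi/4$ and $\theta=\pi/4+2\pi$.

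Two details are off, though neither affects the conclusion.
\begin{enumerate}[(i)]
\item In case (ii), the Dirichlet edge is bent in the $z$-plane. In the paper's normalization its two halves map to the non-collinear rays $\arg z=\arg(1+\Vi\sigma_0)$ and $\arg z=\pi$, not to rays at $\theta_0$ and $\theta_0+\pi$. The determinant is still a multiple of $\sin(\pi\lambda)$, because between the two halves $\arg z$ and $\arg z'$ change by $\pi-\arg(1+\Vi\sigma_0)$ and $-\pi-\arg(1+\Vi\sigma_0)$, a difference of $2\pi$.
\item In case (iii), for a quadrant with arbitrary constants $a_1,a_2$ the determinant is a multiple of $\sin(\lambda\pi/2)$, not $\sin(\lambda\vartheta)$. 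Your formula agrees with this at the corners of $B_L$ only because $\alpha_1=\alpha_2$ there, so $\bbA=I$ and $\vartheta=\pi/2$.
\end{enumerate}
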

\begin{proof}
We adapt the Nicaise-S\"andig regularity theory for elliptic transmission problem to \eqref{eq:trans} and identify local pencils for UPML coefficients \cite{Nicaise1994a}. 
It suffices to consider the homogeneous equation
\begin{align}\label{eq:UinQ}
\nabla\cdot(\bbA\nabla U)=0 \quad \hbox{in}\;\; 
\Omega_{\Bp} := \bigcup\left\{\ol{Q}: Q\in\Cp_L,\; \Bp\in\partial Q\right\}.
\end{align}
If $\Bp$ is an intersection of two PML interfaces, there are four sectors sharing $\Bp$, denoted by $Q_1,\cdots,Q_4$, respectively (Fig.~\ref{fig:PL-vert}: left). We can assume that the origin of local coordinate frame is at $\Bp$ and the two interfaces separating the sectors are \(x_1=0\) and \(x_2=0\), respectively.
Inside a fixed sector $Q_j$, $\bbA= \operatorname{diag}\left(\alpha_2/\alpha_1,\,
\alpha_1/\alpha_2\right)$ is constant and $\alpha_1,\alpha_2\in\{1,1+i\sigma_0\}$.
We define $\xi_1=\alpha_1 x_1$, $\xi_2=\alpha_2x_2$, and
$\widehat U(\xi_1,\xi_2)=U(x_1,x_2)$ for \(\Bx\in Q_j\).
By the chain rule and \eqref{eq:UinQ}, we have $\Delta_{\xibf}\widehat U=0$ in $Q_j$. The general solution has the form
\[
\wh U(\xibf) = C_j^+(\xi_1+\Vi\xi_2)^\lambda
+ C_j^-(\xi_1-\Vi\xi_2)^\lambda \quad 
\hbox{for all}\; C^\pm_j\in\bbC\;\hbox{and}\; \lambda\in\bbC.
\]
Therefore, restricted to $Q_j$, the solution $U$ is given by
\begin{align}\label{eq:sector-homogeneous-modes}
U(\Bx) = C_j^+(\alpha_1 x_1 +\Vi \alpha_2 x_2)^\lambda
+C_j^-(\alpha_1 x_1-\Vi \alpha_2 x_2)^\lambda
\quad \forall\,\Bx\in Q_j.
\end{align}

\begin{figure}[htp!]
    \centering
    \includegraphics[width=0.2\linewidth]{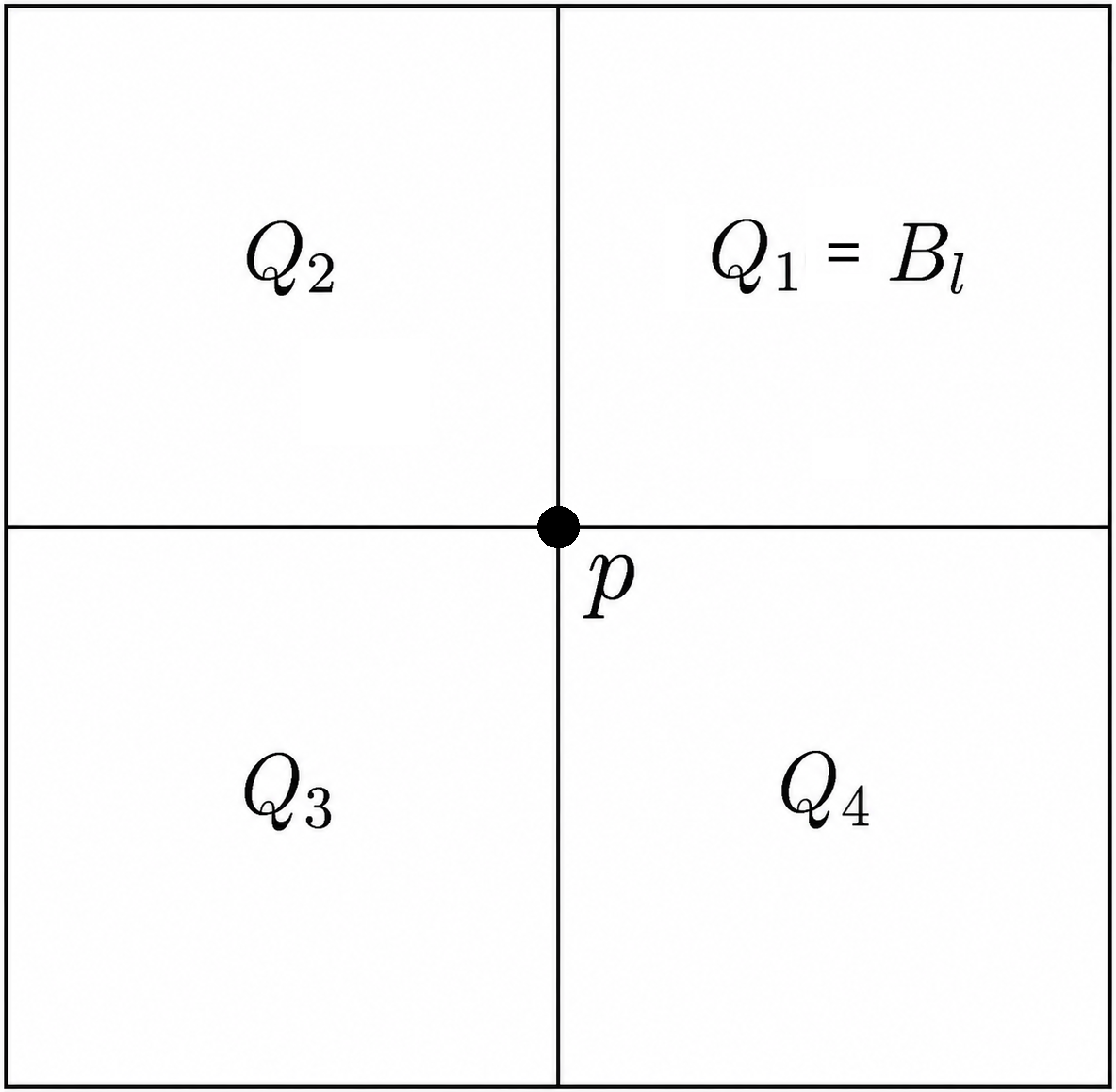}\qquad
    \includegraphics[width=0.2\linewidth]{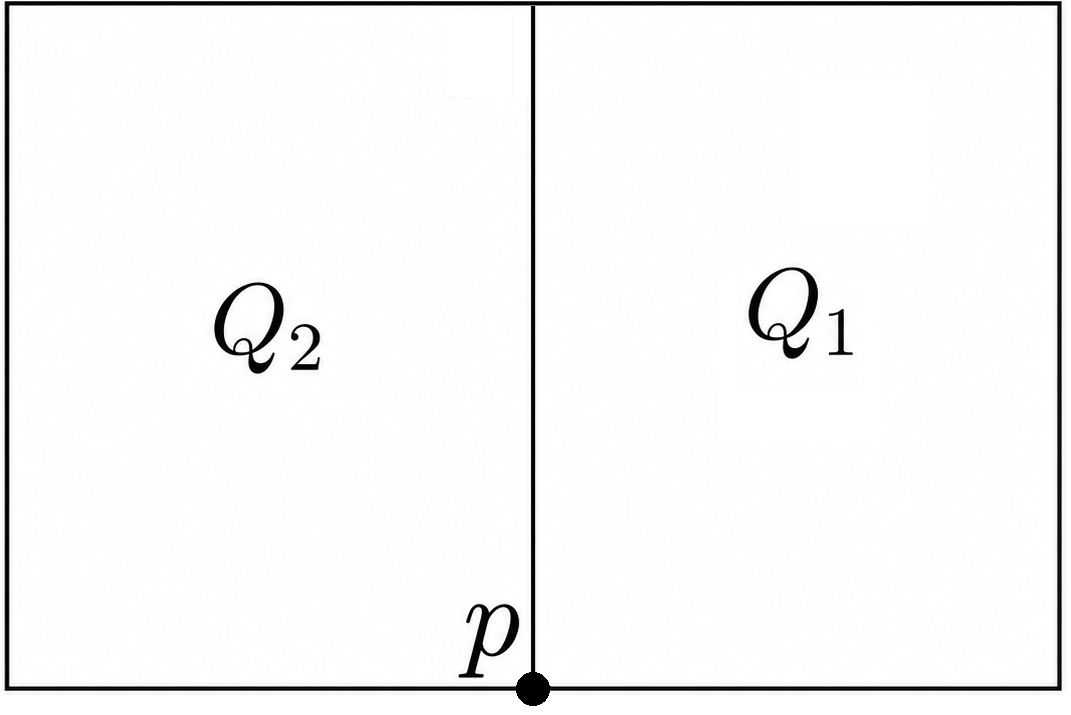}\qquad
    \includegraphics[width=0.155\linewidth]{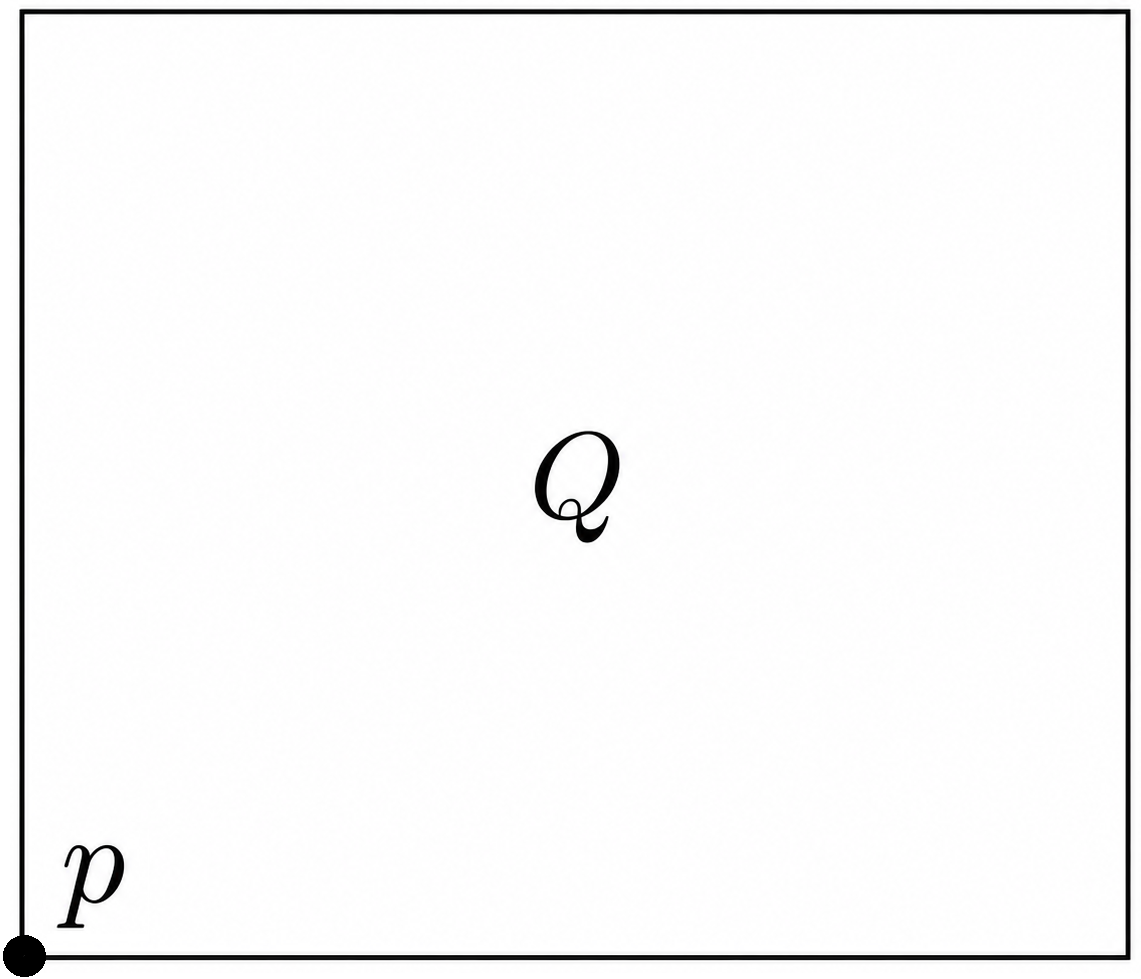}
    \caption{Left: interface-crossing point. Middle: interface-boundary junction. Right: a corner of $B_L$.}
    \label{fig:PL-vert}
\end{figure}

Inside the two sectors \(Q_1\) and \(Q_2\), \(\alpha_2\) is continuous across the interface $\{x_1=0\}$, while \(\alpha_1\) is not. Denote the common value of \(\alpha_2\) by
\(\beta\) and write $\delta^\pm=C_2^\pm-C_1^\pm$.
The continuity of \(U\) across $\{x_1=0\}$ implies
\[
\delta^+(\Vi\beta x_2)^\lambda +\delta^-(-\Vi\beta x_2)^\lambda =0\quad
\hbox{for either $0<|x_2|< l_2$ or $l_2<|x_2|<L_2$}.
\]
Moreover, the continuity of the normal flux $(\bbA\nabla U)\cdot\Bn
=\frac{\alpha_2}{\alpha_1}\frac{\partial U}{\partial x_1}$ across \(\{x_1=0\}\) shows
\[
\delta^+(\Vi\beta x_2)^{\lambda} - \delta^-(-\Vi\beta x_2)^{\lambda} = 0.
\]
This implies
$\delta^+=\delta^-=0$, that is, $C_2^\pm=C_1^\pm$.
The same argument applies to two sectors separated by $\{x_2=0\}$.
Therefore, the expressions of the solution share the same coefficients in the four sectors, denoted by \(C^\pm\). The analyses also apply to the case when $\Bp$ is either an interface-boundary intersection or a corner of $B_L$.

First we consider the case that $\Bp$ is an intersection of two interfaces. By rotating the local coordinate frame, we can assume $\Bp={\bf 0}$, $Q_1 =\Omega_l$, and that $B_l$ is in the first quadrant (see Fig.~\ref{fig:PL-vert}: left). Write $x_1=r\cos\theta$ and $x_2=r\sin\theta$. Since $\alpha_1=\alpha_2=1$ in $\Omega_l$, we can write \eqref{eq:sector-homogeneous-modes} as
\begin{align}\label{eq:Uj}
U(\Bx) = r^\lambda\big(C^+ e^{\Vi \lambda\theta} +C^- e^{-\Vi \lambda\theta}\big) \quad \forall\,\Bx\in \Omega_l. 
\end{align}
Since $U$ is periodic in $\theta$ and smooth inside $\Omega_l$, we have
\[
U|_{\theta = 2\pi+\pi/4} = U|_{\theta =\pi/4},\quad 
\left.\frac{\partial U}{\partial\theta}
\right|_{\theta = 2\pi+\pi/4}
= \left.\frac{\partial U}{\partial\theta}
\right|_{\theta =\pi/4}.
\]
By \eqref{eq:Uj}, the above two conditions are equivalent to
\begin{align*}
\begin{pmatrix}
e^{2\pi\Vi\lambda}-1  &  e^{-2\pi\Vi\lambda}-1 \\
e^{2\pi\Vi\lambda}-1  &  1-e^{-2\pi\Vi\lambda}
\end{pmatrix}
\binom{e^{\Vi\lambda\pi/4}C^+}{e^{-\Vi\lambda\pi/4}C^-}=0 .
\end{align*}
Nonzero solutions of the system require 
$(e^{2\pi\Vi\lambda}-1)(e^{-2\pi\Vi\lambda}-1) =0$.
Therefore, $\Re\lambda\notin (0,1)$.

Next we consider the case in which $\Bp$ is an interface-boundary intersection, that is, $\Bp$ is shared by two sectors $Q_1,Q_2$ in $B_L\backslash B_l$. Again we use the local coordinate frame with $\Bp$ being the origin and $Q_1, Q_2$ in the first and second quadrants, respectively  (Fig.~\ref{fig:PL-vert}: middle). Then  $\alpha_2=1+\Vi\sigma_0$.  Without loss of generality, we can assume $\alpha_1=1+\Vi\sigma_0$ in $Q_1$ and $\alpha_1=1$ in $Q_2$. Similar to \eqref{eq:Uj}, the solution can be written as
\begin{align*}
U(\Bx) = 
\begin{cases}
r^\lambda(1+\Vi\sigma_0)^\lambda  
(C^+e^{\Vi\lambda\theta}
+C^-e^{-\Vi\lambda\theta}) &\hbox{if}\; \Bx\in Q_1, \\
C^+ r_+^\lambda e^{\Vi\lambda\theta_+}
+C^- r_-^\lambda e^{-\Vi\lambda\theta_-} & \hbox{if} \; \Bx\in Q_2,
\end{cases}
\end{align*} 
where $\Bx=r(\cos\theta,\sin\theta)$, $r_\pm
:=\big[(x_1\mp\sigma_0x_2)^2+x_2^2\big]^{1/2}$, 
and $\theta_\pm:= \arccos[(x_1\mp\sigma_0x_2)/r_\pm]$. It is clear that 
\[
\theta(\Bx) =0\quad \forall\,\Bx\in\partial Q_1\cap\{x_2=0\},\qquad 
r_\pm(\Bx) = r, \;\; \theta_\pm(\Bx) =\pi \quad \forall\, \Bx\in\partial Q_2\cap\{x_2=0\}.
\]
The homogeneous boundary condition of $U$ on $(\partial Q_1\cup\partial Q_2)\cap \{x_2=0\}$ implies
\begin{align*}
\begin{pmatrix}
1  &  1 \\
e^{\Vi\lambda\pi}  &  e^{-\Vi\lambda\pi}
\end{pmatrix}
\binom{C^+}{C^-}=0 .
\end{align*}
The vanishing determinant of the coefficient matrix yields $e^{2\Vi\lambda\pi} =1$.
Then $\Re\lambda\notin (0,1)$ in this case.

Finally, if $\Bp$ is a corner of $B_L$, it belongs to only one sector $Q$ (Fig.~\ref{fig:PL-vert}: right). Using \eqref{eq:UinQ}, we have $U(\Bx)= r^\lambda (C^+e^{\Vi\lambda\theta} +C^- e^{-\Vi\lambda\theta})$ with $0< \theta< \pi/2$. The homogeneous boundary condition of $U$ on the two edges connecting $\Bp$ shows
\[
C^+ +C^-=0,\quad
C^+e^{\Vi\lambda\pi/2} +C^- e^{-\Vi\lambda\pi/2} =0.
\]
The vanishing determinant of the coefficient matrix yields $e^{\Vi\lambda\pi} =1$. We still have $\Re\lambda\notin (0,1)$.
\end{proof}

\begin{lemma} \label{lem:regularity-w}
Suppose $w_0$ is the solution to problem \eqref{eq:trans}. 
For any \(0<s<1\), there exists a constant $C_s$ which depends only on $s$ and $\Omega_L$ such that
\begin{equation}\label{app:eq:H1s-shift}
\|w_0\|_{H^{1+s}(\Cp_L)}\le C_s \|\xi\|_{L^2(\Omega_L)}.
\end{equation} 
\end{lemma}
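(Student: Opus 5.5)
The plan is to combine three ingredients: global $H^1$ well-posedness of \eqref{eq:trans}, interior/interface regularity away from $\Cv_L$, and the Kondrat'ev–Nicaise–S\"andig corner expansion near each vertex, with Lemma~\ref{lem:mellin} excluding singular exponents in the strip $0<\Re\lambda<1$.

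\textbf{Step 1 (energy estimate).} By Lax–Milgram, \eqref{eq:trans} has a unique solution $w_0\in\zbHone[\Omega_L]$ with $\N{w_0}_{H^1(\Omega_L)}\le C\NLtwo[\Omega_L]{\xi}$. Coercivity follows from $\Re(\alpha_2/\alpha_1),\Re(\alpha_1/\alpha_2)\ge(1+\sigma_0^2)^{-1}$, which gives $\Re\int\bbA\nabla w\cdot\nabla\bar w\ge c\SNHone[\Omega_L]{w}^2$, together with Friedrichs' inequality on the bounded domain $\Omega_L$. No $k$ enters, so this constant depends only on $\Omega_L$ and $\sigma_0$.

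\textbf{Step 2 (localization).} Take a finite partition of unity subordinate to a cover of $\ol{\Omega_L}$ by four kinds of sets:
\begin{itemize}
\item interior balls of a single $Q\in\Cp_L$, where $\bbA$ is constant;
\item neighborhoods of $\Gamma$, which lies in $\Omega_l$ where $\bbA=I$;
\item neighborhoods of straight interface or boundary segments away from $\Cv_L$;
\item small disks around the vertices in $\Cv_L$.
\end{itemize}
For each cutoff $\eta$, the function $\eta w_0$ solves a problem with right-hand side $\eta\xi-\bbA\nabla w_0\cdot\nabla\eta-\nabla\cdot(w_0\bbA\nabla\eta)$, which lies in $L^2$ with norm $\le C\NLtwo[\Omega_L]{\xi}$ by Step 1.

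\textbf{Step 3 (non-vertex regions).} In interior balls and near $\Gamma$ (a $C^2$ boundary with a Dirichlet condition), standard $H^2$ regularity applies, giving $H^2\supset H^{1+s}$ bounds. Near a flat interface segment, say $x_1=\pm l_1$, the coefficient is constant on each side. A flattened transmission problem with piecewise constant coefficients gives piecewise $H^2$ regularity: tangential difference quotients bound $\partial_{x_2}\nabla w_0$, and the equation then recovers $\partial_{x_1}(\bbA\nabla w_0)_1$ on each side. The same argument works at boundary segments of $\Gamma_L$.

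\textbf{Step 4 (vertices, the main obstacle).} Near $\Bp\in\Cv_L$, apply the Nicaise–S\"andig decomposition \cite{Nicaise1994a} for piecewise smooth transmission problems in polygonal sectors. For $f\in L^2$, it gives $\eta w_0=w_{\rm reg}+\sum c_\lambda\eta\, r^\lambda\phi(\lambda;\theta)$ with $w_{\rm reg}$ piecewise $H^2$. The sum runs over Mellin eigenvalues with $0<\Re\lambda<1$; the line $\Re\lambda=1$ corresponds to the $H^2$ threshold and may only produce log terms, or be handled by using the weight shifted to $H^{1+s}$. Lemma~\ref{lem:mellin} shows this strip contains no eigenvalue, so the sum is empty.

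The delicate point is the right functional framework. I would use the $L^2$-based theory in the fractional space $H^{1+s}$, with right-hand side in $H^{s-1}\supset L^2$ piecewise. The shift theorem in $H^{1+s}$ then holds exactly when no eigenvalue lies on the line $\Re\lambda=s$, and no eigenvalue lies in $0<\Re\lambda\le s<1$. This sidesteps any possible eigenvalue on $\Re\lambda=1$ (for example $\lambda=1$ at interface crossings) and explains why only $s<1$ is claimed. The Fredholm estimate includes a lower-order term, which is absorbed via Step 1 by uniqueness. The pencil is coercive in the sense required by \cite{Nicaise1994a} because of the positive real parts above.

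Summing the local estimates over the finite cover gives $\N{w_0}_{H^{1+s}(\Cp_L)}\le C_s\NLtwo[\Omega_L]{\xi}$, where $C_s$ depends only on $s$, the geometry of $\Omega_L$, and $\sigma_0$.
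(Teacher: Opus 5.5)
Your proposal is correct and is essentially the paper's argument. Both rest on the Nicaise--S\"andig corner theory, with Lemma~\ref{lem:mellin} guaranteeing that no Mellin eigenvalue lies in the strip up to a critical line $\Re\lambda=s<1$, which sidesteps the integer eigenvalue $\lambda=1$. The only difference is packaging: the paper takes $p=2/(2-s)$ so that the critical line $\Re\lambda=2-2/p$ is exactly $\Re\lambda=s$, applies the $W^{2,p}(Q)$ estimate of \cite[Theorem~4.2]{Nicaise1994a} globally, and concludes by the embedding $W^{2,p}(Q)\hookrightarrow H^{1+s}(Q)$. That route spares you both the explicit localization and the need for a fractional-order Hilbertian shift theorem for transmission problems.
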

\begin{proof}
Since \(\Gamma\) is \(C^2\)-smooth by the assumption in \eqref{model}, it produces no Mellin singularities. The only conical points are vertices in \(\Cv_L\). 

Define $p= 2/(2-s)$. By Lemma~\ref{lem:mellin}, the local Mellin pencil at any $\Bp\in\Cv_L$ has no eigenvalues satisfying $\Re\lambda = 2-2/p=s$. Therefore, by \cite[Theorem~4.2]{Nicaise1994a}, the solution $w_0$ of \eqref{eq:trans} is regular inside each $Q\in\Cp_L$, and there exists a constant $C_s $ depending only on $Q$ and $s$ such that
\[
\N{w_0}_{W^{2,p}(Q)} \le C_s  \N{\xi}_{L^p(\Omega_L)}
\le C_s  \NLtwo[\Omega_L]{\xi}.
\]
The proof is finished by the Sobolev embedding $W^{2,p}(Q)\hookrightarrow H^{1+s}(Q)$ in two dimensions.
\end{proof}

\begin{theorem}\label{thm:H1s-stability}
Problem \eqref{eq:problem-Hs} has a unique solution. 
For any $s\in (0,1)$, there exists a constant $C_s$ which depends on $s$ and $\Omega_L$, but not on $k$, such that
\begin{equation}\label{eq:eq:H1s-stability}
\TNHone[\Omega_L]{w} + k^{-1}
\N{w}_{H^{1+s}(\Cp_L)} \le C_s \|\xi\|_{L^2(\Omega_L)} .
\end{equation} 
\end{theorem}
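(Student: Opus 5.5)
Existence and uniqueness follow from the inf-sup condition of Theorem~\ref{thm:infsup-OmL}. Note that \eqref{eq:problem-Hs} is $J$ times the operator $\Cl$: its weak form is $\mathscr{A}_{\Omega_L}(w,v)=(\xi,v)_{\Omega_L}$ for all $v\in\zbHone[\Omega_L]$. The bilinear form is continuous and symmetric. Hence the inf-sup condition holds in both arguments, and the Banach–Ne\v{c}as–Babu\v{s}ka theorem gives a unique $w\in\zbHone[\Omega_L]$. For the $H^1$ bound we apply \eqref{eq:infsup-OmL} directly:
\[
\TNHone[\Omega_L]{w}\le \frac{k}{\mu_3}\sup_{0\ne v}\frac{|(\xi,v)_{\Omega_L}|}{\TNHone[\Omega_L]{v}}
\le \frac{k}{\mu_3}\cdot\frac{\NLtwo[\Omega_L]{\xi}}{k}=C\NLtwo[\Omega_L]{\xi},
\]
since $\NLtwo[\Omega_L]{v}\le k^{-1}\TNHone[\Omega_L]{v}$. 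This is the first half of \eqref{eq:eq:H1s-stability}.

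For the broken $H^{1+s}$ bound, move the zeroth-order term to the right-hand side. The function $w$ solves the principal transmission problem \eqref{eq:trans} with datum
\[
\xi_0:=\xi+k^2Jw\in\Ltwo[\Omega_L].
\]
Apply Lemma~\ref{lem:regularity-w}, whose constant depends only on $s$ and $\Omega_L$ and not on $k$. Using $|J|\le 1+\sigma_0^2$, we obtain
\[
\N{w}_{H^{1+s}(\Cp_L)}\le C_s\big(\NLtwo[\Omega_L]{\xi}+k^2\NLtwo[\Omega_L]{w}\big).
\]
The first step already gives $k\NLtwo[\Omega_L]{w}\le \TNHone[\Omega_L]{w}\le C\NLtwo[\Omega_L]{\xi}$. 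Therefore $k^2\NLtwo[\Omega_L]{w}\le Ck\NLtwo[\Omega_L]{\xi}$, and so
\[
\N{w}_{H^{1+s}(\Cp_L)}\le C_s(1+k)\NLtwo[\Omega_L]{\xi}\le C_sk\NLtwo[\Omega_L]{\xi},
\]
using $k\ge1$. Multiplying by $k^{-1}$ and adding the $H^1$ estimate yields \eqref{eq:eq:H1s-stability}.

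The main point requiring care is that Lemma~\ref{lem:regularity-w} is genuinely $k$-independent. It concerns only the operator $-\nabla\cdot(\bbA\nabla\cdot)$ on the fixed domain $\Omega_L$ with a generic $L^2$ datum, so its constant cannot depend on $k$. One must also check that it applies to the solution of \eqref{eq:trans} with the particular datum $\xi_0$. This holds because $w_0:=w$ is the unique $\zbHone$ solution of \eqref{eq:trans} with that datum. Uniqueness follows from coercivity: $\Re$ of the entries of $\bbA$ is bounded below by $(1+\sigma_0^2)^{-1}$, and Poincaré's inequality holds on $\zbHone[\Omega_L]$.

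Everything else reduces to the standard Helmholtz regularity bootstrap, where the inf-sup constant $O(k^{-1})$ converts into the $L^2$ bound $k^{-1}\NLtwo[\Omega_L]{\xi}$ for $w$.
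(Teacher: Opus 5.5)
Your proposal is correct and follows the paper's proof essentially line by line. The inf-sup condition \eqref{eq:infsup-OmL} gives existence, uniqueness and the $k$-weighted $H^1$ bound, hence $\|w\|_{L^2(\Omega_L)}\le Ck^{-1}\|\xi\|_{L^2(\Omega_L)}$; Lemma~\ref{lem:regularity-w} applied with datum $\xi+k^2Jw$ then yields the broken $H^{1+s}$ bound. Your extra remarks (the transposed inf-sup via symmetry of $\mathscr{A}_{\Omega_L}$, and $w$ being the unique coercive solution of \eqref{eq:trans} with that datum) only make explicit steps the paper leaves implicit.
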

\begin{proof}
The inf-sup condition \eqref{eq:infsup-OmL} implies that problem \eqref{eq:problem-Hs} has a unique solution. Moreover, 
\[
\TNHone[\Omega_L]{w} \le  Ck \sup_{0\ne v\in H^1_0(\Omega_L)} 
\frac{|\mathscr{A}_{\Omega_L}(w,v)|}{\TNHone[\Omega_L]{v}}
=Ck\sup_{0\ne v\in H^1_0(\Omega_L)}
\frac{|(\xi,v)_{\Omega_L}|}{\TNHone[\Omega_L]{v}} 
\le C\NLtwo[\Omega_L]{\xi}.
\]
This implies $\NLtwo[\Omega_L]{w}\le Ck^{-1}\NLtwo[\Omega_L]{\xi}$.
From Lemma~\ref{lem:regularity-w}, we know that
\[
\|w\|_{H^{1+s}(\Cp_L)} \le C_s\NLtwo[\Omega_L]{\xi + k^2Jw}
\le C_s\big(\NLtwo[\Omega_L]{\xi} + k^2\NLtwo[\Omega_L]{w}\big)
\le C_s k\NLtwo[\Omega_L]{\xi}.
\]
The proof is finished.
\end{proof}

\subsection{Finite element approximation of \eqref{eq:weak-hu}}
\label{sec:fem}

Suppose \(\widehat K\) is the reference triangle with vertices $(0,0)^T$, $(1,0)^T$, and $(0,1)^T$. Let $\mathcal T_h$ be a triangulation of $\ol\Omega_L$, which is fitted to \(\Gamma\), \(\Gamma_L\), and the UPML interfaces \(x_j=\pm l_j\), $j=1,2$.  More precisely, each element \(K\in\mathcal T_h\) is closed and is the image of \(\widehat K\) under a \(C^2\)-diffeomorphism \(F_K\). The diameter of $K$ is denoted by $h_K$. We assume that there exists a constant $C>0$ independent of $\Ct_h$ such that $h:=\max\limits_{K'\in\Ct_h}h_{K'} \le C h_K$ for all $K\in\Ct_h$ and
\begin{align}\label{eq:DFm}
\|D^mF_K\|_{L^\infty(\widehat K)}\le C h_K^m, \quad
\|D^mF_K^{-1}\|_{L^\infty(K)} \le C h_K^{-m}, \quad
m=1,2.
\end{align}
These assumptions are standard for curved meshes (cf. \cite{Lenoir1986}). It is also standard to assume that the edges of $\Ct_h$ not lying on $\Gamma$ are straight.
The $H^1$-conforming finite element space is defined as
\[
V_h :=\big\{v_h\in H^1_0(\Omega_L):
v_h\circ F_K\in\mathbb P_1(\widehat K)
\quad\forall K\in\mathcal T_h\big\}.
\]

For a function $\hat{v}\in C(\wh{K})$, the linear Lagrange interpolation of $\hat{v}$ on $\wh{K}$ is denoted by $\hat{I}(\hat{v})$. Using pullback mappings, we define the nodal interpolation of a function $v\in C\big(\ol\Omega_L\big)$ by
\[
(I_h v)|_K := \big(\hat{I}(v\circ F_K)\big)\circ F_K^{-1} 
\quad \,\forall\, K\in\Ct_h.
\]
Since all interior edges of $\Ct_h$ are straight, it is easy to verify that $I_hv\in\Hone[\Omega_L]$. The embedding $H^{1+s}(Q)\hookrightarrow C\big(\ol{Q}\big)$ for $s>0$ implies that $I_h$ is well-defined on $H^{1+s}(Q)$ for any $Q\in\Cp_L$. Moreover, using \eqref{eq:DFm} and standard interpolation error estimates on $\wh{K}$, we have
\begin{equation}\label{eq:int-err}
\|v-I_hv\|_{L^2(K)} +h\SNHone[K]{v-I_hv}
\le C h^{1+s}\|v\|_{H^{1+s}(K)} \quad
\forall\,K\in\Ct_h.
\end{equation}

Inspired by Li and Wu \cite{LiWu2019}, we penalize the numerical solution on the set of edges inside $\Omega_l$ 
\[
\Ce_h:=
\big\{E=\partial K^+\cap\partial K^-:
K^\pm\in\Ct_h,\, K^{\pm}\subset\ol\Omega_l,\, K^+\ne K^-\big\}.
\]
For \(E\in\Ce_h\), define the patch $\omega_E:=\hbox{interior}(K^+\cup K^-)$ with $K^\pm\in\Ct_h$ being the two elements sharing \(E\). Across $E$, the jump of normal derivative of a function $v$ is denoted by 
\[
\jump[E]{\partial_\Bn v} :=
\big(\nabla v|_{K^+}\big) \cdot \Bn 
-\big(\nabla v|_{K^-}\big)\cdot \Bn,
\]
where $\Bn$ denotes the unit normal of $E$ pointing from $K^+$ to $K^-$. Define the interior penalty term by
\[
\mathscr{J}_h(u,v):=\sum_{E\in \Ce_h}\gamma_E h_E
\int_E\jump[E]{\partial_{\Bn}u}\,
\jump[E]{\partial_{\Bn}v}.
\]
The penalty parameters $\gamma_E$ satisfy
\begin{equation}\label{eq:gammaE}
|\gamma_E|\le C_\gamma,\quad
\Re\gamma_E\ge -\gamma_0,
\end{equation}
for two positive constants $C_\gamma$ and $\gamma_0$ independent of $h$ and $k$. In practice, $\gamma_0$ is chosen sufficiently small.  
For $0<s \le 1$, we define the bilinear form on $\zbHone[\Omega_L]\cap H^{1+s}(\Cp_L)$ by
\[
\mathscr{A}_h(u,v):= \mathscr{A}_{\Omega_L}(u,v) + \mathscr{J}_h(u,v).
\]

The CIP-finite element approximation to \eqref{eq:weak-hu} is to find \(u_h\in V_h\) such that
\begin{equation}\label{eq:weakh}
\mathscr{A}_h(u_h,v_h)=(f,v_h)_{\Omega_L}
\quad\forall\, v_h\in V_h .
\end{equation}
By \eqref{eq:hu-model}, we have $\Delta\hat{u} = -f-k^2\hat{u}\in \Ltwo[\Omega_l]$. This implies $\jump[E]{\partial_\Bn\hat{u}}=0$ for all $E\in\Ce_h$. Therefore, 
\[
\mathscr{A}_h(\hat{u},v_h) = \mathscr{A}_{\Omega_L}(\hat{u},v_h) =(f,v_h)_{\Omega_L}
\quad\forall\, v_h\in V_h.
\]
This yields the Galerkin orthogonality 
\begin{align}\label{eq:Galerkin-orth}
\mathscr{A}_h(\hat{u}-u_h,v_h) =0
\quad\forall\, v_h\in V_h.
\end{align}

\subsection{Modified elliptic projection}

Following Li and Wu \cite{LiWu2019}, we introduce a modified elliptic projection operator which will play an important role in wavenumber-explicit error estimates. In the UPML case, approximation theory concerning the projection relies on the piecewise $H^{1+s}$-regularity.

Now we define the modified elliptic projection operator $\Pi_h$: $\zbHone[\Omega_L]\to V_h$. Given $\phi\in\zbHone[\Omega_L]$, $\Pi_h \phi\in V_h$ is the solution to the discrete problem
\begin{align}\label{eq:Pih}
\mathscr{B}_h(\Pi_h\phi,v_h)
=\mathscr{B}(\phi,v_h) \quad\forall\,v_h\in V_h,
\end{align}
where the bilinear forms $\mathscr{B}$ and $\mathscr{B}_h$ are, respectively, defined by
\[
\mathscr{B}(p,q) :=
\int_{\Omega_L}\bbA\nabla p\cdot\nabla q, \quad 
\mathscr{B}_h(p,q):=\mathscr{B}(p,q)+\mathscr{J}_h(p,q).
\] 
To prove the coercivity and continuity of $\mathscr{B}_h$, we define the weighted norm and semi-norm
\begin{align*}
\sTN{q}_{\Ct_h}= \big(\TNHone[\Omega_L]{q}^2
+\SN{q}_{\Ce_h}^2\big)^{1/2},\quad 
\SN{q}_{\Ct_h} = \big(\SNHone[\Omega_L]{q}^2
    +\SN{q}_{\Ce_h}^2\big)^{1/2},\quad
\SN{q}_{\Ce_h}^2 = \sum_{E\in \Ce_h}|\gamma_E|h_E
\|\jump[E]{\partial_{\Bn}q}\|_{L^2(E)}^2.  
\end{align*}

\begin{lemma} \label{lem:Bh-stability}
Suppose \(\gamma_0\) is sufficiently small. There exists a positive constant $C_0$ such that
\begin{align}\label{eq:B}
\operatorname{Re}\mathscr{B}(p,\ol{p})
\ge C_0\SNHone[\Omega_L]{p}^2, \quad 
|\mathscr{B}(p,q)| \le C_1
\SNHone[\Omega_L]{p} \SNHone[\Omega_L]{q}
\quad \forall\,p,q\in\Hone[\Omega_L].
\end{align}
Suppose $v_h \in V_h$ and $\SN{p}_{\Ce_h},\SN{q}_{\Ce_h}$ are well-defined, there exists a constant $C_1$ independent of $h$ such that 
\begin{align} \label{eq:Bh}
\operatorname{Re}\mathscr{B}_h(v_h,\ol v_h)
\ge C_0\SNHone[\Omega_L]{v_h}^2, \quad 
|\mathscr{B}_h(p,q)| \le C_1 \SN{p}_{\Ct_h} \SN{q}_{\Ct_h}.
\end{align}
\end{lemma}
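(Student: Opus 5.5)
For \eqref{eq:B}, I will argue pointwise from the diagonal structure of $\bbA$. For $p\in\Hone[\Omega_L]$ we have
\[
\Re\big(\bbA\nabla p\cdot\nabla\ol p\big)=\Re\big(\alpha_2/\alpha_1\big)|\partial_1 p|^2+\Re\big(\alpha_1/\alpha_2\big)|\partial_2 p|^2 .
\]
The quotients $\alpha_2/\alpha_1$ and $\alpha_1/\alpha_2$ only take values in $\{1,\,1+\Vi\sigma_0,\,(1+\Vi\sigma_0)^{-1}\}$. Their real parts are therefore bounded below by $(1+\sigma_0^2)^{-1}$, as already used in the proof of Lemma~\ref{lem:upml-whole}. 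Integrating over $\Omega_L$ gives coercivity with $C_0=(1+\sigma_0^2)^{-1}$. Continuity follows from $|\bbA|\le 1+\sigma_0$ entrywise and Cauchy--Schwarz.

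For \eqref{eq:Bh}, the continuity part is direct. We write $\mathscr{B}_h=\mathscr{B}+\mathscr{J}_h$ and bound $|\mathscr{J}_h(p,q)|\le \sum_E|\gamma_E|h_E\|\jump[E]{\partial_\Bn p}\|_{L^2(E)}\|\jump[E]{\partial_\Bn q}\|_{L^2(E)}\le \SN{p}_{\Ce_h}\SN{q}_{\Ce_h}$ by Cauchy--Schwarz over the edges. Combined with the continuity of $\mathscr{B}$, this gives $C_1\SN{p}_{\Ct_h}\SN{q}_{\Ct_h}$.

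Coercivity on $V_h$ is the only nontrivial point, since $\Re\gamma_E$ may be negative, down to $-\gamma_0$. The plan is:
\[
\Re\mathscr{B}_h(v_h,\ol v_h)\ge C_0\SNHone[\Omega_L]{v_h}^2-\gamma_0\sum_{E\in\Ce_h}h_E\|\jump[E]{\partial_\Bn v_h}\|_{L^2(E)}^2 .
\]
I will then control the jump term by the following inverse/trace estimate for $v_h\in V_h$:
\[
h_E\|\jump[E]{\partial_\Bn v_h}\|_{L^2(E)}^2\le C\SNHone[\omega_E]{v_h}^2 .
\]
This holds because every $E\in\Ce_h$ lies in the interior of $\ol\Omega_l$ and is straight. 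On each neighbouring element $K^\pm$, $\nabla v_h$ is bounded by the piecewise linear structure together with the mapping bounds \eqref{eq:DFm}, so $\|\nabla v_h|_{K^\pm}\|_{L^2(E)}^2\le Ch_E^{-1}\|\nabla v_h\|_{L^2(K^\pm)}^2$. On straight elements $\nabla v_h$ is constant, and on curved elements adjacent to $\Gamma$ we use the scaled trace inequality plus an inverse estimate, relying on \eqref{eq:DFm} and shape regularity.

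Summing over edges, each element appears in at most three patches. This gives $\sum_E h_E\|\jump[E]{\partial_\Bn v_h}\|_{L^2(E)}^2\le C_{\rm inv}\SNHone[\Omega_L]{v_h}^2$. Hence $\Re\mathscr{B}_h(v_h,\ol v_h)\ge (C_0-\gamma_0C_{\rm inv})\SNHone[\Omega_L]{v_h}^2$. Choosing $\gamma_0\le C_0/(2C_{\rm inv})$ and renaming the constant yields the claim.

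The main obstacle is the inverse inequality on curved elements adjacent to $\Gamma$. There the pulled-back gradient is not constant, and the bounds \eqref{eq:DFm} have to be used carefully to obtain the $h_E^{-1}$ scaling uniformly.
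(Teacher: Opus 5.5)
Your proposal is correct and follows essentially the same route as the paper. Coercivity and continuity of $\mathscr{B}$ come from the lower bound $(1+\sigma_0^2)^{-1}$ on the real parts of the diagonal entries of $\bbA$, continuity of $\mathscr{B}_h$ from Cauchy--Schwarz, and the inverse trace estimate $h_E\|\jump[E]{\partial_\Bn v_h}\|_{L^2(E)}^2\le C\|\nabla v_h\|_{L^2(\omega_E)}^2$ absorbs the possibly negative penalty $-\gamma_0$. The step you flag as the main obstacle is routine: $\hat\nabla(v_h\circ F_K)$ is constant on $\widehat K$, so \eqref{eq:DFm} with $m=1$ directly gives $\|\nabla v_h\|_{L^2(E)}^2\le Ch_K^{-1}\|\nabla v_h\|_{L^2(K)}^2$ on curved elements too.
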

\begin{proof}
Since $\Re(\alpha_2/\alpha_1)\ge (1+\sigma_0^2)^{-1}$ and $\Re(\alpha_1/\alpha_2)\ge (1+\sigma_0^2)^{-1}$, \eqref{eq:B} is obvious. Moreover, a direct application of the Cauchy-Schwarz inequality shows the second inequality of \eqref{eq:Bh}.

By \eqref{eq:DFm} and the inverse trace inequality, we have
$h_E\|[\partial_{\Bn}v_h]\|_{L^2(E)}^2\le C\|\nabla v_h\|_{\BL^2(\omega_E)}^2$ for all \(E\in \Ce_h\). Then
\begin{equation}\label{eq:Bh-coer}
\SN{v_h}_{\Ce_h}\le C_1\SNHone[\Omega_L]{v_h}. 
\end{equation}
By \eqref{eq:gammaE}, there exists a constant $C>0$ independent of $h$ such that 
\[
\operatorname{Re}\mathscr{B}_h(v_h,\ol v_h) 
\ge \frac{1}{1+\sigma_0^2}
\SNHone[\Omega_L]{v_h}^2
-\gamma_0 \sum_{E\in \Ce_h}h_E
\|\jump[E]{\partial_{\Bn}v_h}\|_{L^2(E)}^2
\ge \bigg(\frac{1}{1+\sigma_0^2}-C\gamma_0\bigg)
\SNHone[\Omega_L]{v_h}^2.
\]
Choosing \(\gamma_0\) small enough such that $(1+\sigma_0^2)^{-1}\ge 2C\gamma_0$, we obtain the first inequality of \eqref{eq:Bh}.
\end{proof}

\begin{lemma}\label{lem:int-err}
Suppose $0<s\le 1$. There exists a constant $C_s$ depending only on $s$ such that
\begin{align}\label{eq:Jh-Ih}
|I_hv|_{\Ce_h} \le C_s h^s |v|_{H^{1+s}(\Omega_l)}\quad
\forall\,v\in H^{1+s}(\Cp_L).  
\end{align}
Moreover, if $\jump[E]{\partial_\Bn v}=0$ on all $E\in\Ce_h$, then 
\begin{align}\label{eq:Jh-Ih-err}
\SN{v-I_hv}_{\Ct_h} \le C_s h^s|v|_{H^{1+s}(\Cp_L)}.  
\end{align}
\end{lemma}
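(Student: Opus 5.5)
Both estimates will be proved element by element (or edge by edge), using the pullback to the reference triangle and a Bramble--Hilbert argument. The fractional Sobolev seminorms on the patch $\omega_E$ are then summed.

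\emph{Proof of \eqref{eq:Jh-Ih}.} Fix $E\in\Ce_h$ with patch $\omega_E=\mathrm{int}(K^+\cup K^-)\subset\Omega_l$. The function $v$ belongs to $H^{1+s}(\Omega_l)$ because $\Omega_l\in\Cp_L$, so $\partial_\Bn v$ has a well-defined trace on $E$ and $\jump[E]{\partial_\Bn v}=0$. Hence
\[
\jump[E]{\partial_\Bn I_hv}=\jump[E]{\partial_\Bn (I_hv-v)}.
\]
On each $K^\pm$ we apply the scaled trace inequality for $H^s$ with $s>1/2$, or more robustly a fractional trace argument on the reference element, to $\nabla(v-I_hv)$. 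This gives
\[
h_E\|\nabla(v-I_hv)\|_{L^2(E)}^2\le C\big(\|\nabla(v-I_hv)\|_{L^2(K)}^2+h^{2s}|\nabla(v-I_hv)|_{H^s(K)}^2\big).
\]
Since $I_hv$ is linear, or nearly so on curved $K$ with the correction controlled by \eqref{eq:DFm}, we have $|\nabla I_hv|_{H^s(K)}\lesssim h^{1-s}|v|_{H^{1+s}(K)}$ for straight elements, where the term vanishes. Combining this with \eqref{eq:int-err} gives $h_E\|\jump[E]{\partial_\Bn I_hv}\|^2_{L^2(E)}\le C h^{2s}|v|^2_{H^{1+s}(\omega_E)}$. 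Here we use the seminorm version of \eqref{eq:int-err}, which follows from Bramble--Hilbert because $\hat I$ reproduces $\mathbb P_1$. Summing over $E$ uses finite overlap of the patches and superadditivity of the Sobolev--Slobodeckij seminorm over disjoint elements: $\sum_K|v|^2_{H^{1+s}(K)}\le|v|^2_{H^{1+s}(\Omega_l)}$. Together with $|\gamma_E|\le C_\gamma$, this yields \eqref{eq:Jh-Ih}.

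The case $s\le1/2$ is the main obstacle, because the trace of $\nabla v$ on $E$ is then not defined for a general $H^{1+s}$ function. I would instead avoid traces of $v$ entirely. On the patch $\omega_E$, let $\Pi v$ be a single affine function, for instance an averaged Taylor polynomial or the $L^2(\omega_E)$-best affine fit of $v$. Its jump across $E$ vanishes, so $\jump[E]{\partial_\Bn I_hv}=\jump[E]{\partial_\Bn(I_hv-\Pi v)}$, and $I_hv-\Pi v$ is piecewise linear. The discrete inverse trace inequality, as used in \eqref{eq:Bh-coer}, then gives
\[
h_E\|\jump[E]{\partial_\Bn(I_hv-\Pi v)}\|^2_{L^2(E)}\le C\|\nabla(I_hv-\Pi v)\|^2_{L^2(\omega_E)}.
\]
The right-hand side is bounded by $\|\nabla(I_hv-v)\|_{L^2(\omega_E)}+\|\nabla(v-\Pi v)\|_{L^2(\omega_E)}$. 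Each term is $\lesssim h^s|v|_{H^{1+s}(\omega_E)}$, the first by \eqref{eq:int-err} and the second by the fractional Bramble--Hilbert lemma on the patch. This argument works for all $0<s\le1$ and is the route I would actually write. On curved elements near $\Gamma$, $\Pi v$ is affine in physical coordinates while $I_hv$ is affine in reference coordinates; this is handled by \eqref{eq:DFm}. In any case, edges in $\Ce_h$ next to $\Gamma$ involve elements mapped by $F_K$, and only the nodal interpolant of an affine function is perturbed, by $O(h^2)$ in $C^1$, which is harmless.

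\emph{Proof of \eqref{eq:Jh-Ih-err}.} Write $|v-I_hv|_{\Ct_h}^2=|v-I_hv|^2_{H^1(\Omega_L)}+|v-I_hv|^2_{\Ce_h}$. The first term is bounded by $Ch^{2s}|v|^2_{H^{1+s}(\Cp_L)}$ by summing the seminorm form of \eqref{eq:int-err} over $K$; this is legitimate because the mesh is fitted to the interfaces, so every $K$ lies in a single $Q\in\Cp_L$. For the second term, the hypothesis $\jump[E]{\partial_\Bn v}=0$ means that $|v-I_hv|_{\Ce_h}=|I_hv|_{\Ce_h}$, interpreted via the same patch argument with $v$ replaced by $\Pi v$. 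This is then bounded by \eqref{eq:Jh-Ih}, since $|v|_{H^{1+s}(\Omega_l)}\le|v|_{H^{1+s}(\Cp_L)}$.
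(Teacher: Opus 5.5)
Your patch argument (subtract a single affine function on $\omega_E$ so that only the jump of a piecewise linear function remains, apply the discrete inverse trace inequality, then bound $|v-I_hv|_{H^1(\omega_E)}$ by the interpolation estimate and $|v-\Pi v|_{H^1(\omega_E)}$ by a fractional Bramble--Hilbert lemma) is essentially the paper's proof, which writes it as an infimum over $q\in\mathbb P_1(\omega_E)$ of the jump of $I_h(v-q)$; the second estimate is then obtained the same way in both. One caveat, which the paper shares: on curved elements touching $\Gamma$, the interpolant of an affine $q$ differs from $q$ by $O(h|\nabla q|)$ in $W^{1,\infty}$ (it is $O(h^2|\nabla q|)$ only in reference coordinates), so the effect is not quite harmless, since it leaves a lower-order term of size $h|v|_{H^1(\omega_E)}$ and strictly requires the full norm $\|v\|_{H^{1+s}}$ rather than the seminorm on the right-hand side there.
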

\begin{proof}
For \(E\in \Ce_h\), let \(K^+\) and \(K^-\) be the two elements sharing \(E\).  By the definition of \(\Ce_h\), $E$ is straight and the patch \(\omega_E=K^+\cup K^-\) is contained in \(\Omega_l\). It is clear that
\[
h_E^{1/2}\NLtwo[E]{\jump[E]{\partial_{\Bn}(I_hv)}} 
=h_E^{1/2}\inf_{q\in\mathbb P_1(\omega_E)}
    \NLtwo[E]{\jump[E]{\partial_{\Bn}(I_h(v -q))}}
\le C\inf_{q\in\mathbb P_1(\omega_E)} \SNHone[\omega_E]{I_h (v -q)}.
\]
Since $v\in H^{1+s}(\Omega_l)$, we deduce from \eqref{eq:int-err} that
\[
h_E^{1/2}\NLtwo[E]{\jump[E]{\partial_{\Bn}(I_hv)}} 
\le C\SNHone[\omega_E]{v-I_hv}
    +C\inf_{q\in\mathbb P_1(\omega_E)}\SNHone[\omega_E]{v-q}
\le C_s\SN{v}_{H^{1+s}(\omega_E)}.
\]
Summing over all $E\in\Ce_h$ yields \eqref{eq:Jh-Ih}.

Finally, if \(\jump[E]{\partial_{\Bn}v}=0\) on all \(E\in\Ce_h\), then
$|v-I_hv|_{\Ce_h}=|I_hv|_{\Ce_h}\le C_sh^s |v|_{H^{1+s}(\Omega_l)}$.
Together with \eqref{eq:int-err}, this shows \eqref{eq:Jh-Ih-err}.
\end{proof}

\begin{theorem}\label{thm:projection}
Problem \eqref{eq:Pih} has a unique solution which satisfies 
\begin{align} \label{eq:Pih-stab}
\SNHone[\Omega_L]{\Pi_h\phi} \le C \SNHone[\Omega_L]{\phi}.
\end{align}
Moreover, if $\phi\in H^{1+s}(\Cp_L)$ with $0<s\le 1$ and
$\jump[E]{\partial_{\Bn}\phi}=0$ for all $E\in \Ce_h$, then there exists a constant $C_s$ which depends only on $s$ and $\Omega_L$ such that 
\begin{equation}\label{eq:error1-proj-H1}
\NLtwo[\Omega_L]{\phi-\Pi_h\phi} +
h^s\SN{\phi-\Pi_h\phi}_{\Ct_h} 
\le C_s h^{2s}|\phi|_{H^{1+s}(\Cp_L)}.
\end{equation}
\end{theorem}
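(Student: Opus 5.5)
The plan is to argue in three steps: well-posedness and stability via coercivity, an $H^1$-type error bound via a C\'ea argument using the interpolant $I_h$, and an $L^2$ bound via an Aubin--Nitsche duality argument that uses Lemma~\ref{lem:regularity-w}.

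\emph{Well-posedness and \eqref{eq:Pih-stab}.} On $V_h\subset\zbHone[\Omega_L]$, Poincar\'e's inequality makes $\SNHone[\Omega_L]{\cdot}$ a norm. By the first inequality of \eqref{eq:Bh}, $\mathscr{B}_h$ is coercive on $V_h$, so \eqref{eq:Pih} has a unique solution (Lax--Milgram in finite dimensions). Take $v_h=\ol{\Pi_h\phi}$ in \eqref{eq:Pih}. Using \eqref{eq:Bh} on the left and \eqref{eq:B} on the right gives
\[
C_0\SNHone[\Omega_L]{\Pi_h\phi}^2\le \Re\mathscr{B}(\phi,\ol{\Pi_h\phi})\le C_1\SNHone[\Omega_L]{\phi}\SNHone[\Omega_L]{\Pi_h\phi},
\]
which is \eqref{eq:Pih-stab}.

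\emph{Energy estimate.} Assume now $\phi\in H^{1+s}(\Cp_L)$ with $\jump[E]{\partial_\Bn\phi}=0$ on $\Ce_h$. Then $\mathscr{J}_h(\phi,v_h)=0$, so
\[
\mathscr{B}_h(\phi-\Pi_h\phi,v_h)=0\quad\forall\,v_h\in V_h .
\]
Set $\eta_h:=I_h\phi-\Pi_h\phi\in V_h$. By the discrete coercivity in \eqref{eq:Bh} and this orthogonality,
\[
C_0\SNHone[\Omega_L]{\eta_h}^2\le|\mathscr{B}_h(I_h\phi-\phi,\ol{\eta_h})|\le C_1\SN{I_h\phi-\phi}_{\Ct_h}\SN{\eta_h}_{\Ct_h}.
\]
By \eqref{eq:Bh-coer}, $\SN{\eta_h}_{\Ct_h}\le C\SNHone[\Omega_L]{\eta_h}$. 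Hence $\SN{\eta_h}_{\Ct_h}\le C\SN{\phi-I_h\phi}_{\Ct_h}$, and the triangle inequality together with \eqref{eq:Jh-Ih-err} gives
\[
\SN{\phi-\Pi_h\phi}_{\Ct_h}\le C_sh^s|\phi|_{H^{1+s}(\Cp_L)} .
\]

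\emph{$L^2$ estimate by duality.} Let $e:=\phi-\Pi_h\phi$ and let $z\in\zbHone[\Omega_L]$ solve the adjoint problem $\mathscr{B}(q,z)=(q,\ol e)_{\Omega_L}$ for all $q\in\zbHone[\Omega_L]$. Since $\bbA$ is diagonal (hence symmetric), this is problem \eqref{eq:trans} with right-hand side $\ol e$. Lemma~\ref{lem:regularity-w} then gives
\[
\|z\|_{H^{1+s}(\Cp_L)}\le C_s\NLtwo[\Omega_L]{e}.
\]
In $\Omega_l$ we have $\bbA=I$, so $\Delta z\in\Ltwo[\Omega_l]$ and $\jump[E]{\partial_\Bn z}=0$ for all $E\in\Ce_h$, exactly as for $\hat u$. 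Consequently $\mathscr{J}_h(e,z)=0$. Using the orthogonality of $e$ to $V_h$, we get
\[
\NLtwo[\Omega_L]{e}^2=\mathscr{B}_h(e,z)=\mathscr{B}_h(e,z-I_hz)\le C_1\SN{e}_{\Ct_h}\SN{z-I_hz}_{\Ct_h}.
\]
By the energy estimate and \eqref{eq:Jh-Ih-err} applied to $z$, the right-hand side is at most $C_sh^s|\phi|_{H^{1+s}(\Cp_L)}\cdot h^s\NLtwo[\Omega_L]{e}$. Dividing by $\NLtwo[\Omega_L]{e}$ yields \eqref{eq:error1-proj-H1}.

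\emph{Main obstacle.} The delicate point is justifying the jump terms $\mathscr{J}_h(\cdot,z)$ and $\mathscr{J}_h(\cdot,\phi)$ when $s$ is small, since $H^{1+s}$ alone need not give $L^2$ traces of the gradient on edges. I would handle this with the $W^{2,p}$ regularity in the proof of Lemma~\ref{lem:regularity-w}, or with interior $H^2$-regularity in $\Omega_l$, where $\bbA=I$ and $\Gamma$ is $C^2$, to make sense of $\partial_\Bn z$ on interior edges. Then $\jump[E]{\partial_\Bn z}=0$ holds in $L^2(E)$, and $\SN{z-I_hz}_{\Ce_h}=\SN{I_hz}_{\Ce_h}$ is controlled by \eqref{eq:Jh-Ih}.
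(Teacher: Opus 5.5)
Your argument is essentially the paper's proof. The energy bound uses coercivity of $\mathscr{B}_h$ on $V_h$, Galerkin orthogonality, and a C\'ea-type comparison with $I_h\phi$ via \eqref{eq:Jh-Ih-err} and \eqref{eq:Bh-coer}; the $L^2$ bound is an Aubin--Nitsche argument with Lemma~\ref{lem:regularity-w}. Your differences are minor: you actually prove existence and \eqref{eq:Pih-stab}, which the paper skips; you subtract $I_hz$ instead of $\Pi_h\psi$ in the duality step, which is equivalent; and you explicitly verify $\jump[E]{\partial_\Bn z}=0$ on $\Ce_h$, which the paper uses without comment. One caveat is shared with the paper. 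At the endpoint $s=1$, Lemma~\ref{lem:regularity-w} is stated only for $0<s<1$, and Lemma~\ref{lem:mellin} does not exclude eigenvalues with $\Re\lambda=1$ (integer eigenvalues do occur). So it does not supply $\|z\|_{H^{2}(\Cp_L)}\le C\NLtwo[\Omega_L]{e}$, and for $s=1$ the duality step only gives $h^{1+s'}$ for any $s'<1$. For $0<s<1$ your proof is complete.
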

\begin{proof}
Since \(\jump[E]{\partial_{\Bn}\phi}=0\) on all \(E\in \Ce_h\), we have $\mathscr{B}_h(\phi,v_h)=\mathscr{B}(\phi,v_h)$ for all $v_h\in V_h$. This shows
\[
\mathscr{B}_h(\phi-\Pi_h\phi,v_h) =0 \quad
\forall\,v_h\in V_h.
\]
Write $\theta_h:= \Pi_h\phi -I_h\phi$ and $e_h=\phi-I_h\phi$. From \eqref{eq:Bh}, we have
\begin{align*}
\SNHone[\Omega_L]{\theta_h}^2 \le C|\mathscr{B}_h(\theta_h,\bar\theta_h)|
=C|\mathscr{B}_h(e_h,\bar\theta_h)|
\le C\SN{e_h}_{\Ct_h}\SN{\theta_h}_{\Ct_h}
\le C h^s|\phi|_{H^{1+s}(\Cp_L)} \SNHone[\Omega_L]{\theta_h}.
\end{align*}
This yields $\SNHone[\Omega_L]{\theta_h} \le C h^s|\phi|_{H^{1+s}(\Cp_L)}$. Using \eqref{eq:Jh-Ih-err} and \eqref{eq:Bh-coer}, we obtain
\[
\SN{\phi-\Pi_h\phi}_{\Ct_h} 
\le \SN{e_h}_{\Ct_h} + \SN{\theta_h}_{\Ct_h}
\le C h^s|\phi|_{H^{1+s}(\Cp_L)} +\SNHone[\Omega_L]{\theta_h}
\le C h^s |\phi|_{H^{1+s}(\Cp_L)}. 
\]

The $L^2$-norm error is estimated by the duality argument. By \eqref{eq:B}, there is a unique solution $\psi\in\zbHone[\Omega_L]$ to the weak problem
\begin{align}\label{dual-psi}
\mathscr B(\psi,v)
=(\overline{\phi-\Pi_h\phi},v)_{\Omega_L}
\quad\forall v\in H_0^1(\Omega_L).   
\end{align}
Using Lemma~\ref{lem:regularity-w}, there exists a constant $C$ depending on $s$ and $\Omega_L$ such that
\begin{align}\label{stab-psi}
\N{\psi}_{H^{1+s}(\Cp_L)} \le C \NLtwo[\Omega_L]{\phi-\Pi_h\phi}.
\end{align}
Taking $v =\phi-\Pi_h\phi$ in \eqref{dual-psi} and using the symmetry of $\mathscr{B}_h$, we find that
\begin{align*}
\NLtwo[\Omega_L]{\phi-\Pi_h\phi}^2 
=\mathscr{B}(\psi,\phi-\Pi_h\phi) 
=\mathscr{B}_h(\psi -\Pi_h\psi,\phi-\Pi_h\phi)
\le Ch^{2s}\N{\psi}_{H^{1+s}(\Cp_L)}\N{\phi}_{H^{1+s}(\Cp_L)}.
\end{align*}
Finally, from \eqref{stab-psi}, we obtain $\NLtwo[\Omega_L]{\phi-\Pi_h\phi}\le C_s h^{2s}\N{\phi}_{H^{1+s}(\Cp_L)}$.
\end{proof}

\subsection{Preasymptotic finite element error estimates}

Now we are ready to present the main result of this section, that is, the preasymptotic error estimates of the CIP finite element method.  

\begin{theorem}\label{thm:fem-err}
Suppose the parameter \(\gamma_0\) in $\mathscr{J}_h$ is sufficiently small. For any \(s\in (0,1)\), there are two positive constants \(\delta_s\) and \(C_s\) which depend on $s$ and $\Omega_L$, but not on \(h\) and \(k\), such that, when
$k^3h^{2s}\le \delta_s$, the discrete problem \eqref{eq:weakh} has a unique solution
\(u_h\in V_h\), and 
\begin{align*}
&\|\hat u-u_h\|_{L^2(\Omega_L)}
\le C_s (kh^s)^2\|f\|_{L^2(\Omega_l)}, \\
&\NHone[\Omega_L]{\hat u-u_h} \le
C_s(kh^s+k^3h^{2s}) \|f\|_{L^2(\Omega_l)} .    
\end{align*}
\end{theorem}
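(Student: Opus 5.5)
We follow the Li–Wu duality argument, with $H^2$ regularity replaced by the piecewise $H^{1+s}$ regularity of Theorem~\ref{thm:H1s-stability}. The central object is the modified elliptic projection of the PML solution.

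\textbf{Step 1: regularity of $\hat u$ and splitting of the error.} By Theorem~\ref{thm:H1s-stability} with $\xi = Jf$, we have $\TNHone[\Omega_L]{\hat u}+k^{-1}\N{\hat u}_{H^{1+s}(\Cp_L)}\le C_s\NLtwo[\Omega_l]{f}$. Since $\jump[E]{\partial_\Bn\hat u}=0$ on $\Ce_h$, Theorem~\ref{thm:projection} applies to $\phi=\hat u$. Write
\[
\hat u-u_h=\eta+\theta_h,\qquad \eta:=\hat u-\Pi_h\hat u,\quad \theta_h:=\Pi_h\hat u-u_h\in V_h .
\]
Theorem~\ref{thm:projection} then gives $\NLtwo[\Omega_L]{\eta}\le C_s h^{2s}k\NLtwo{f}$ and $\SN{\eta}_{\Ct_h}\le C_s h^s k\NLtwo{f}$.

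\textbf{Step 2: an $L^2$ estimate for $e:=\hat u-u_h$ by duality.} Let $z\in\zbHone[\Omega_L]$ solve the adjoint problem $\mathscr{A}_{\Omega_L}(v,z)=(v,\bar e)$. Since $\mathscr{A}_{\Omega_L}$ is symmetric, this is \eqref{eq:problem-Hs} with data $\bar e$ (up to the harmless factor $J$). Theorem~\ref{thm:H1s-stability} gives $\TNHone{z}+k^{-1}\N{z}_{H^{1+s}(\Cp_L)}\le C\NLtwo{e}$. Galerkin orthogonality \eqref{eq:Galerkin-orth} and $\mathscr{J}_h(e,z)=0$ yield
\[
\NLtwo{e}^2=\mathscr{A}_h(e,z-\Pi_h z)=\mathscr{B}_h(e,z-\Pi_hz)-k^2(Je,z-\Pi_hz).
\]
Use the defining property \eqref{eq:Pih} of $\Pi_h$: $\mathscr{B}_h(\theta_h,z-\Pi_hz)=0$, so only $\mathscr{B}_h(\eta,z-\Pi_hz)$ survives. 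This term is bounded by
\[
C h^s k\NLtwo{f}\cdot h^s k\NLtwo{e}=C(kh^s)^2\NLtwo{f}\NLtwo{e}.
\]
The second term is bounded by $k^2\NLtwo{e}\,C h^{2s}k\NLtwo{e}=Ck^3h^{2s}\NLtwo{e}^2$. When $k^3h^{2s}\le\delta_s$ is small, this term is absorbed into the left side, giving $\NLtwo{e}\le C_s(kh^s)^2\NLtwo{f}$. The same argument with $f=0$ gives uniqueness of $u_h$, and hence existence, since the system is square.

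\textbf{Step 3: the $H^1$ estimate.} Take $v_h=\bar\theta_h$. Using \eqref{eq:Pih} and Galerkin orthogonality,
\[
\mathscr{B}_h(\theta_h,\bar\theta_h)=\mathscr{B}_h(\hat u-u_h,\bar\theta_h)=k^2(J e,\bar\theta_h).
\]
Lemma~\ref{lem:Bh-stability} then gives $\SNHone{\theta_h}^2\le Ck^2\NLtwo{e}\NLtwo{\theta_h}$. Since $\NLtwo{\theta_h}\le\NLtwo{e}+\NLtwo{\eta}$, Step 2 yields $\SNHone{\theta_h}\le Ck\NLtwo{e}+Ck\NLtwo{\eta}\le C(k^3h^{2s}+k^2h^{2s})\NLtwo{f}$. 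Combining with $\SNHone{\eta}\le Ckh^s\NLtwo{f}$ and the $L^2$ bound for the $k^2$-weighted part, we obtain $\NHone{\hat u-u_h}\le C_s(kh^s+k^3h^{2s})\NLtwo{f}$, noting $k^2h^{2s}\le kh^s$ whenever $kh^s\le1$.

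The main obstacle is Step 2. We must check that the adjoint solution $z$ satisfies $\jump[E]{\partial_\Bn z}=0$ on $\Ce_h$, which holds since $\Delta z\in L^2(\Omega_l)$, so that Theorem~\ref{thm:projection} applies to $z$. We must also check that the boundedness $|\mathscr{B}_h(\eta,z-\Pi_hz)|\le C_1\SN{\eta}_{\Ct_h}\SN{z-\Pi_hz}_{\Ct_h}$ holds with only $H^{1+s}$ regularity. This uses that both jump seminorms are well-defined, because interpolation on elements in $\Omega_l$ controls the edge jumps.
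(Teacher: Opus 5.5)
Your proposal is correct and follows essentially the paper's argument: the same Li--Wu duality with the adjoint solution, the $\mathscr{B}_h$-orthogonality of $\Pi_h$ to kill the discrete part, absorption of the $k^3h^{2s}$ term under $k^3h^{2s}\le\delta_s$, and the identity $\mathscr{B}_h(\theta_h,\bar\theta_h)=k^2(Je,\bar\theta_h)$ for the $H^1$ bound. The differences are cosmetic. You pair $\hat u-\Pi_h\hat u$ against $z-\Pi_h z$ where the paper uses $\hat u-I_h\hat u$, and you explicitly obtain uniqueness and existence of $u_h$ by rerunning the $L^2$ argument with $f=0$, a step the paper's proof leaves implicit.
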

\begin{proof}
By Theorem~\ref{thm:H1s-stability}, the UPML solution of problem \eqref{eq:weak-hu} admits the regularity estimate
\begin{equation}\label{eq:hu-H1s}
\|\hat{u}\|_{H^{1+s}(\Cp_L)} \le C_s k\|f\|_{L^2(\Omega_l)} .
\end{equation}
Define $e=\hat{u}-u_h$ and consider the dual problem: find $\phi\in\zbHone[\Omega_L]$ such that
\begin{equation}\label{eq:adjoint}
\mathscr{A}_{\Omega_L}(\phi,v)=(\bar{e},v)_{\Omega_L}
        \quad\forall v\in H^1_0(\Omega_L).
\end{equation}
By Theorem~\ref{thm:H1s-stability}, the problem has a unique solution which satisfies
\begin{equation}\label{eq:phi-H1s}
\NHone[\Omega_L]{\phi} + k^{-1}\N{\phi}_{H^{1+s}(\Cp_L)} \le
C_s \|e\|_{L^2(\Omega_L)}.
\end{equation}

Since $\jump[E]{\partial_\Bn\phi}=0$ on all $E\in\Ce_h$, we have $\mathscr{A}_{\Omega_L}(\phi,e) = \mathscr{A}_h(\phi,e)$. Taking $v=e$ in \eqref{eq:adjoint} yields
\[
\|e\|_{L^2(\Omega_L)}^2 = \mathscr{A}_h(\phi,e) = \mathscr{A}_h(\phi-\Pi_h\phi,e)
= \mathscr{B}_h(\phi-\Pi_h\phi,\hat{u}-I_h\hat{u}) 
    -k^2(\phi-\Pi_h\phi, Je)_{\Omega_L},
\]
where $\Pi_h\phi$ is the modified elliptic projection of $\phi$. 
Using \eqref{eq:int-err}, \eqref{eq:Jh-Ih-err}, and Theorem~\ref{thm:projection}, we have
\begin{align*}
\|e\|_{L^2(\Omega_L)}^2 \le Ch^{2s}\N{\phi}_{H^{1+s}(\Cp_L)} 
    \N{\hat{u}}_{H^{1+s}(\Cp_L)}  
    +k^2h^{2s}\N{\phi}_{H^{1+s}(\Cp_L)}\NLtwo[\Omega_L]{e}.
\end{align*}
Together with \eqref{eq:hu-H1s} and \eqref{eq:phi-H1s}, this shows
\begin{align*}
\|e\|_{L^2(\Omega_L)} 
\le C_0 kh^{2s} \N{\hat{u}}_{H^{1+s}(\Cp_L)}  
    +C_1k^3h^{2s}\NLtwo[\Omega_L]{e},
\end{align*}
where $C_0$ and $C_1$ depend on $s$ and $\Omega_L$, but not on $k$ and $h$. Since $k^3h^{2s}\le \delta_s:=(2C_1)^{-1}$ by assumption, we end up with
\begin{align*}
\|e\|_{L^2(\Omega_L)} \le Ckh^{2s}\N{\hat{u}}_{H^{1+s}(\Cp_L)}
\le Ck^2 h^{2s}\NLtwo[\Omega_L]{f}.
\end{align*}

Next we prove the $H^1$-norm error estimate. Write $e=\xi -\eta$ with $\xi:=\hat u-\Pi_h\hat u$ and $\eta:=u_h-\Pi_h\hat u$. By Theorem~\ref{thm:projection} and \eqref{eq:hu-H1s}, we have
\begin{equation}\label{eq:rho-H1}
\NHone[\Omega_L]{\xi} \le  C\bigl(kh^s+k^2h^{2s})\|f\|_{L^2(\Omega_l)} .
\end{equation}
Since \(\eta\in V_h\), we have $\mathscr{B}_h(\xi,\bar\eta)=0$. Therefore,
\[
0=\mathscr{A}_h(e,\bar\eta) = -\mathscr{B}_h(\eta,\bar\eta)-k^2(Je,\bar\eta)_{\Omega_L}.
\]
By \eqref{eq:Bh-coer} and Lemma~\ref{lem:Bh-stability}, we deduce that
\begin{align*}
\SNHone[\Omega_L]{\eta}^2 \le C|\mathscr{B}_h(\eta,\bar\eta)| 
\le Ck^2\|e\|^2_{L^2(\Omega_L)}+ Ck^2\|e\|_{L^2(\Omega_L)}\|\xi\|_{L^2(\Omega_L)} 
\le Ck^6h^{4s} \NLtwo[\Omega_L]{f}^2.
\end{align*}
The proof is finished by combining this inequality with \eqref{eq:rho-H1}.
\end{proof}

\begin{corollary}\label{cor:err}
Suppose $u,u_h$ are the solutions to the scattering problem \eqref{model} and the discrete problem \eqref{eq:weakh}, respectively. Suppose the parameter \(\gamma_0\) is sufficiently small. For any \(s\in (0,1)\), there exist two positive constants \(\delta_s\) and \(C_s\) which depend on $s$ and $\Omega_L$, but not on \(h\) and \(k\), such that, when $k^3h^{2s}\le \delta_s$, 
\begin{align*}
&\|u-u_h\|_{L^2(\Omega_l)}\le C_s \Big(k^2h^{2s}
    +k^{2}e^{-\frac{1}{10}k\bar\sigma}\Big) \|f\|_{L^2(\Omega_l)}, \\
&\NHone[\Omega_l]{u-u_h} \le C_s\Big(kh^s+k^3h^{2s}
    +k^{2} e^{-\frac{1}{10}k\bar\sigma}\Big)
    \|f\|_{L^2(\Omega_l)}.    
\end{align*}
\end{corollary}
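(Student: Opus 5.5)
The plan is a triangle inequality on $\Omega_l$ with the UPML solution $\hat u$ as the intermediate function. We write
\[
u-u_h=(u-\tilde u)+(\tilde u-\hat u)+(\hat u-u_h)\quad\hbox{in}\;\Omega_l .
\]
The first term vanishes identically, because the stretching is trivial in $B_l$: we have $\sigma_j=0$ for $|t|\le l_j$, hence $\tilde{\Bx}=\Bx$ and $J=1$ there, and $\tilde u=u$ in $\Omega_l$ by \eqref{eq:tu} and \eqref{u-integ}. So the estimate reduces to the two remaining pieces.

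For the PML truncation error we invoke Theorem~\ref{thm:upml-error}. Since $\Omega_l\subset\Omega_L$, the weighted norm gives
\[
\NLtwo[\Omega_l]{\tilde u-\hat u}\le k^{-1}\TNHone[\Omega_L]{\tilde u-\hat u},\qquad \NHone[\Omega_l]{\tilde u-\hat u}\le \TNHone[\Omega_L]{\tilde u-\hat u},
\]
the latter because $k\ge1$. Both are therefore bounded by $Ck^2e^{-\frac1{10}k\bar\sigma}\NLtwo[\Omega_l]{f}$; in the $L^2$ case we are in fact left with an extra factor $k^{-1}$, which we simply discard. The hypotheses $d_j\ge R+l_j$ and $\sigma_0 d$ large are the standing assumptions under which Theorems~\ref{thm:infsup-OmL} and \ref{thm:upml-error} hold, and the corollary inherits them.

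For the discretization error we apply Theorem~\ref{thm:fem-err} with the same $s$, $\gamma_0$ and threshold $k^3h^{2s}\le\delta_s$. This gives $\NLtwo[\Omega_L]{\hat u-u_h}\le C_s k^2h^{2s}\NLtwo[\Omega_l]{f}$ and $\NHone[\Omega_L]{\hat u-u_h}\le C_s(kh^s+k^3h^{2s})\NLtwo[\Omega_l]{f}$. Restricting the norms from $\Omega_L$ to $\Omega_l$ only decreases them.

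Adding the three contributions yields both stated bounds. The only point requiring care is one of consistency: the problem \eqref{eq:weak-hu} approximated in Theorem~\ref{thm:fem-err} must be the same one whose solution appears in Theorem~\ref{thm:upml-error}. This holds because $\supp f\subset B_l$, so the right-hand sides $(f,v)_{\Omega_L}$ and $(f,v)_{\Omega_l}$ coincide. Consequently there is no genuine obstacle, and the proof is essentially bookkeeping of the constants and of the norm restrictions.
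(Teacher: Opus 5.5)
Your proposal is correct and follows the paper's proof: the paper likewise obtains the corollary as a direct consequence of Theorems~\ref{thm:upml-error} and~\ref{thm:fem-err}, using $u=\tilde u$ in $\Omega_l$ and the triangle inequality through $\hat u$. Your remarks on restricting norms to $\Omega_l$, on inheriting the standing assumptions on $d_j$ and $\sigma_0 d$, and on $(f,v)_{\Omega_L}=(f,v)_{\Omega_l}$ simply spell out the bookkeeping that the paper leaves implicit.
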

\begin{proof}
The results are direct consequences of Theorem~\ref{thm:upml-error} and \ref{thm:fem-err}.
\end{proof}


\section{Numerical experiments}
\label{sec:numerics}

This section reports numerical experiments for solving the truncated UPML problem with both the standard FEM and the CIP-FEM.  The purpose is to illustrate the pollution effect in the high-frequency regime and to show the performance of the CIP-FEM.

\subsection{Reference solution}

We carry out the experiments for the sound-soft scattering problem by the circular obstacle $D=B_a({\bf 0})$ with $a=0.15$.
The incident wave is the plane wave
$u^{\rm inc}(\Bx)=\exp\bigl(\mathrm{i}k\,\Bd\cdot \Bx\bigr)$ with
$\Bd=(\cos\theta_{\rm inc},\sin\theta_{\rm inc})$ and $\theta_{\rm inc}=\pi/6$. The scattered field \(u^s\) satisfies
\begin{equation}\label{eq:num-plane-wave-scattering}
\Delta u^s+k^2u^s=0\;\;\; \text{in } D^c,\quad 
u^s=-u^{\rm inc} \;\;\; \text{on } \Gamma,\quad 
\lim_{r\to\infty}\sqrt{r}(\partial_r u^s-\mathrm{i}k u^s) = 0.
\end{equation}
In polar coordinates, the scattered field admits the series representation
\begin{equation}
\label{eq:num-mie-series}
u^s(r,\theta) = -\frac{J_0(ka)}{H_0^{(1)}(ka)} H_0^{(1)}(kr)
-2 \sum_{n=1}^{\infty}\Vi^n \frac{J_n(ka)}{H_n^{(1)}(ka)}
H_n^{(1)}(kr) \cos(n(\theta-\theta_{\rm inc})),
\quad r>a.
\end{equation}

Now we introduce a smooth radial cut-off function \(\chi=\chi(r)\) satisfying $\chi(r)\equiv 1$ for $r\le 0.22$ and $\chi(r)\equiv 0$ for $r\ge 0.32$, and define $w:=u^s+\chi u^{\rm inc}$. It is clear that \(w\) satisfies  
\begin{equation}
\label{eq:num-transformed-source}
\Delta w +k^2w = -f_\chi :=
(\Delta\chi)u^{\rm inc}+ 2\nabla\chi\cdot\nabla u^{\rm inc}
\;\;\;\hbox{in}\; D^c,\quad 
w=0\;\;\;\hbox{on}\;\Gamma.
\end{equation}
Clearly \(f_\chi\) is supported only in the annulus $0.22<r<0.32$.
In practice, we calculate the reference solution $u^s$ approximately by truncating the series \eqref{eq:num-mie-series} into finite terms $n\le N_{\rm Mie} :=\big\lceil ka+10(ka)^{1/3}+20\big\rceil$.
Moreover, we shall solve \eqref{eq:num-transformed-source} for a numerical solution $w_h$ with the UPML and CIP-FEM methods, and recover the numerical scattered field by $u_h^s=w_h-\chi u^{\rm inc}$.

\subsection{Numerical results}

We take the physical and truncated domains to be $B_l=(-0.5,0.5)^2$ and $B_L=(-1,1)^2$, respectively. The UPML medium parameter is chosen as $\sigma_0=5$. For the CIP-FEM, we use the dispersion-based real penalty parameters in \cite{LiWu2019} for the edges $E\in\Ce_h$, namely, 
\begin{equation}
\label{eq:num-cip-gamma}
\gamma_E\equiv  -\frac{\sqrt3}{24} -
    \frac{\sqrt3}{1728}(k h)^2
    \quad \forall\,E\in\Ce_h.
\end{equation}
The relative approximation error is defined by 
$E_h:=\TNHone[\Omega_l]{w-w_h}/\TNHone[\Omega_l]{w}$, and the relative interpolation error is defined by 
$E_h^{\rm int}:=\TNHone[\Omega_l]{w-I_hw}/\TNHone[\Omega_l]{w}$.

Figure~\ref{fig:obstacle-nodal-errors} shows absolute values of the error function $|w-w_h| =|u^s-u_h^s|$.
The standard FEM yields a pronounced large pollution error in the physical region, while the pollution error produced by the CIP-FEM is substantially smaller and evenly distributed. This indicates that the interior penalty term suppresses the dominant phase error. Figure~\ref{fig:obstacle-trace} compares the real parts of $u^s$ and the numerical solutions on the horizontal line \(x_2=0.35\).
The standard FEM exhibits visible phase and amplitude mismatches, while the numerical solution obtained by the CIP-FEM matches the oscillatory pattern of $u^s$. 

\begin{figure}[ht!]
\centering
\includegraphics[width=0.6\textwidth]{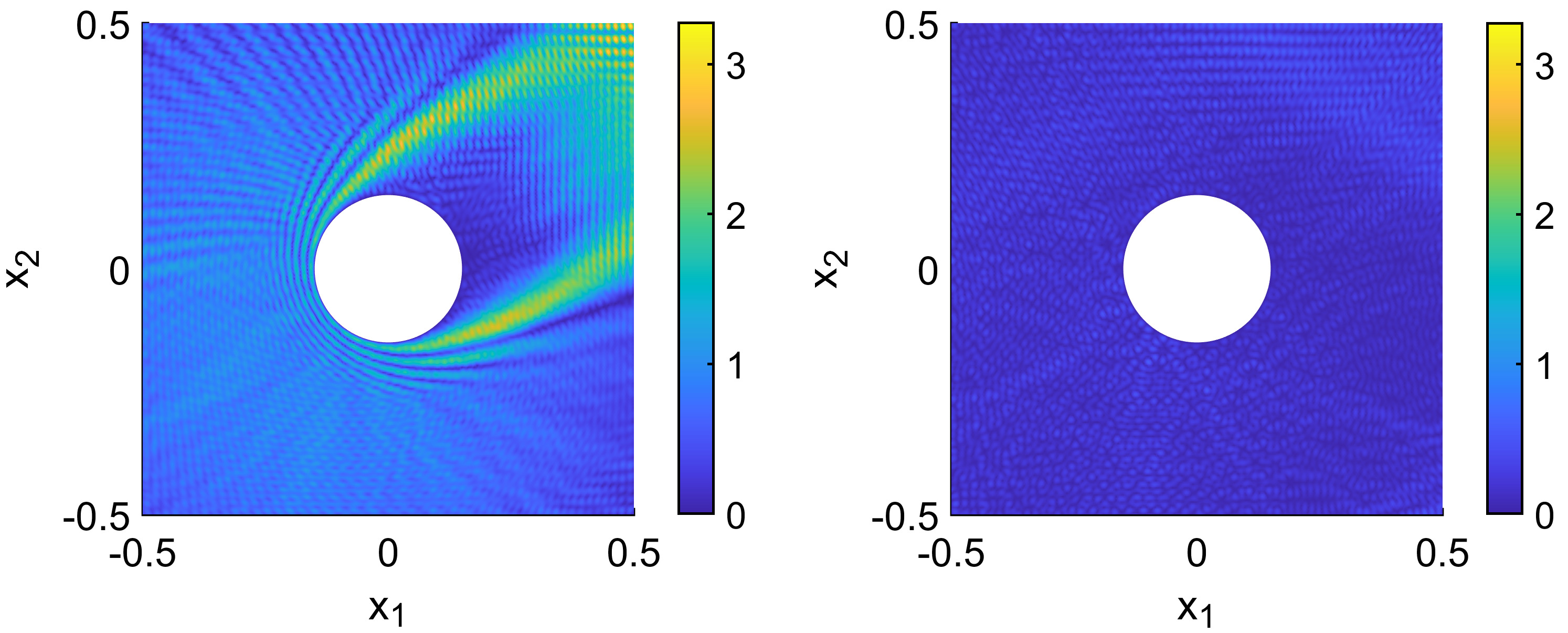}
\caption{Plots of $|w-w_h|$ with $k=240$ and $kh=1$. Left: standard FEM. Right: CIP-FEM.}
\label{fig:obstacle-nodal-errors}
\end{figure}

\begin{figure}[ht!]
\centering
\includegraphics[width=0.5\textwidth]{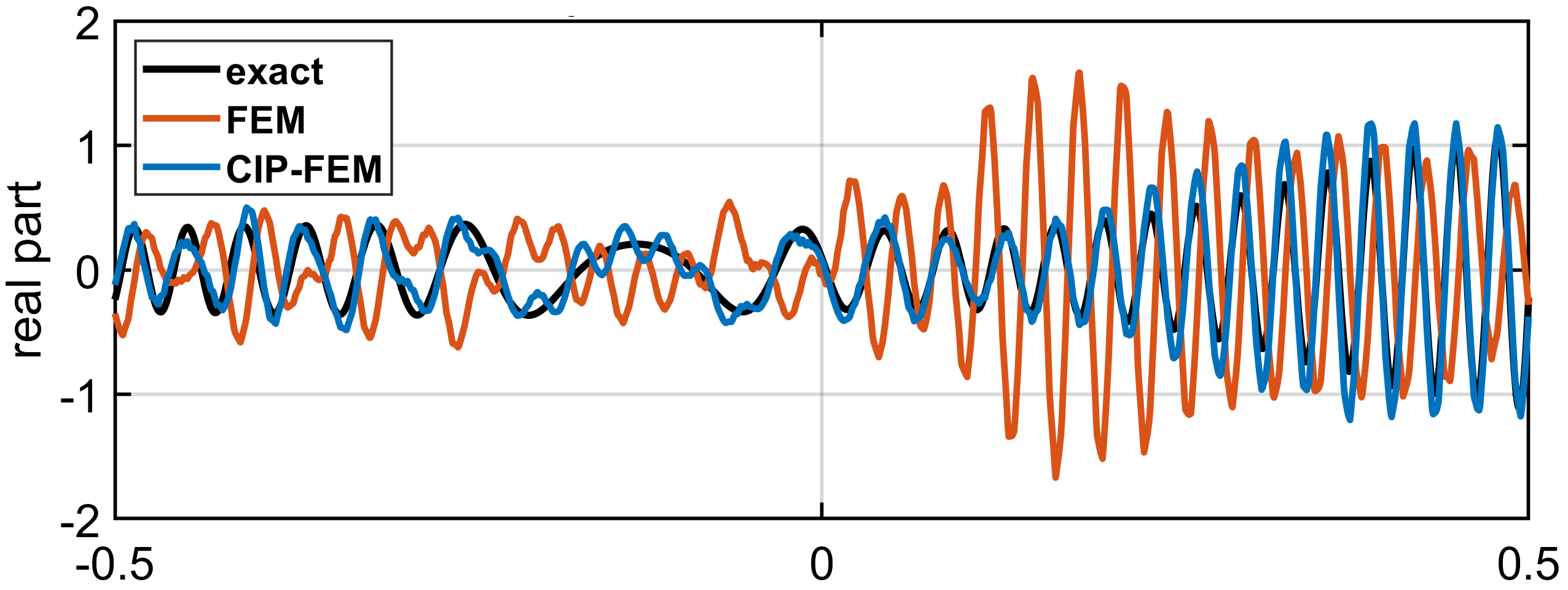}
\caption{Plots of $\Re u^s$ and $\Re u^s_h$ on the line \(\{x_2=0.35\}\) with \(k=240\) and \(kh=1\).}
\label{fig:obstacle-trace}
\end{figure}

Next, we investigate the pollution effects in the numerical solutions. We fix the number of degrees of freedom per wavelength by setting \(kh=1\) and \(0.5\), while increasing the wavenumber $k$.  Figure~\ref{fig:obstacle-fixed-kh-error} plots the interpolation error $E^{\rm int}_h$ and the approximation errors $E_h$ obtained by both FEM and CIP-FEM.
For both \(kh=1\) and $0.5$, the FEM errors grow rapidly as \(k\) increases, and are far above the interpolation error in the large wavenumber regime. However, the CIP-FEM errors remain almost stable to $k$ in both cases. This indicates $E_h = O(kh)$ and that the pollution errors are suppressed remarkably.

\begin{figure}[ht!]
\centering
\includegraphics[width=0.4\textwidth]{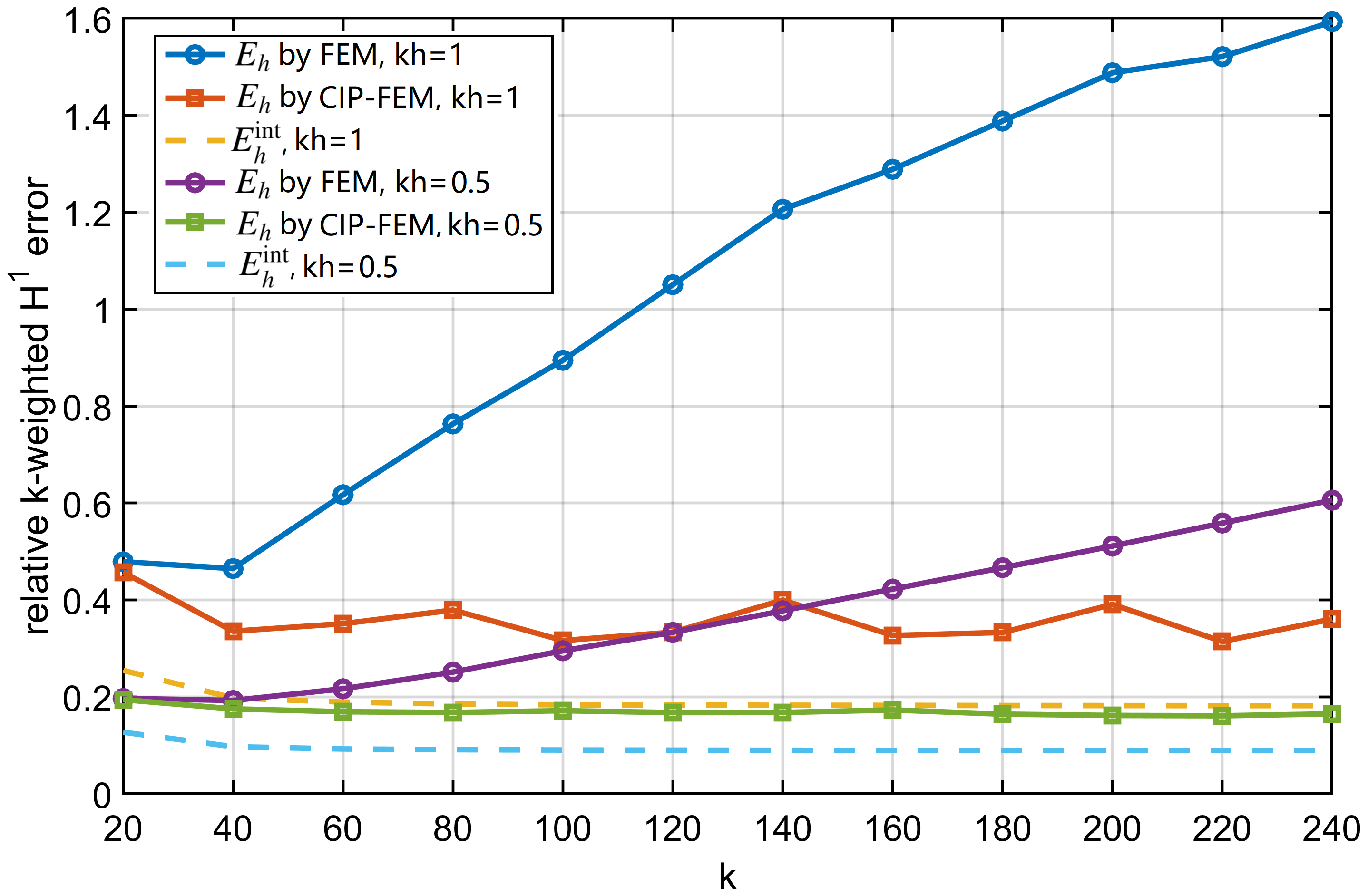}
\caption{Approximation errors and interpolation errors for \(kh=1\)
and \(kh=0.5\).}
\label{fig:obstacle-fixed-kh-error}
\end{figure}

The numerical results are qualitatively consistent with the
pre-asymptotic estimate. The interpolation error is controlled by the number of
degrees of freedom per wavelength, while the pollution error of the finite element solution grows with $k$ at fixed \(kh\). The tuned CIP penalty reduces phase errors substantially and keeps the pollution error of the same order as the interpolation error.
 
\bibliographystyle{siam}
\bibliography{references}

\end{document}